\DeclareMathOperator\defect{def}
\DeclareMathOperator\diag{diag}
\DeclareMathOperator\tr{tr}
\DeclareMathOperator\wt{wt}
\DeclareMathOperator\DBG{DBG}
\DeclareMathOperator\LP{LP}
\DeclareMathOperator\paths{paths}
\DeclareMathOperator\codim{codim}
\DeclareMathOperator\sitype{type}
\DeclareMathOperator\wts{wts}
\def\GL{{\mathrm{GL}}}
\def\dom{{\mathrm{dom}}}
\author{Felix Schremmer}\date{\today}
\title{Affine Deligne--Lusztig varieties via the double Bruhat graph I:\\Semi-infinite orbits}
\numberwithin{equation}{section}
\newtheorem{theorem}[equation]{Theorem}
\newtheorem{proposition}[equation]{Proposition}
\newtheorem{lemma}[equation]{Lemma}
\newtheorem{corollary}[equation]{Corollary}
\theoremstyle{definition}
\newtheorem{definition}[equation]{Definition}
\theoremstyle{remark}
\newtheorem{example}[equation]{Example}
\newtheorem{remark}[equation]{Remark}
\def\abs#1{{\left\lvert{#1}\right\rvert}}
\def\doubleparen#1{{(\!({#1})\!)}}
\def\doublebrack#1{{[\![{#1}]\!]}}
\let\oldqedsymbol\qedsymbol
\def\qedaddendum{}
\def\qedsymbol{\oldqedsymbol\qedaddendum}
\def\af{{\mathrm{af}}}
\def\rightqed{\pushQED{\qed}\qedhere\popQED}
\begin{document}
\maketitle
\begin{abstract}
We introduce a new language to describe the geometry of affine Deligne--Lusztig varieties in affine flag varieties. This first part of a two paper series develops the definition and fundamental properties of the double Bruhat graph by studying semi-infinite orbits. This double Bruhat graph was originally introduced by Naito--Watanabe to study periodic $R$-polynomials. We use it to describe the geometry of many affine Deligne--Lusztig varieties, overcoming a previously ubiquitous regularity condition.
\end{abstract}
\section{Introduction}
Shimura varieties play a central role in the Langlands program. By giving the Shimura variety an interpretation as a moduli space (e.g.\ of certain abelian varieties), one obtains an integral model whose generic fibre recovers the original Shimura variety \cite{Rapoport2005, Kisin2018, Pappas2023}. The special fibre of such an integral model is then known as the mod $p$ reduction of the Shimura variety. In the case of a parahoric level structure, the geometry of each special fibre is closely related to the geometry of corresponding affine Deligne--Lusztig varieties. Similar affine Deligne--Lusztig varieties occur in the special fibres of moduli spaces of local $G$-shtukas \cite{Viehmann2018}.

We consider a reductive group $G$ defined over a local field $F$, whose completion of the maximal unramified extension we denote by $\breve F$. Given a parahoric subgroup $K\subset G(\breve F)$, we associate the affine Deligne--Lusztig variety $X_x^K(b)$ to any two elements $x$ and $b$ in $G(\breve F)$ \cite[Definition~4.1]{Rapoport2005}. It is defined as locally closed subvariety of the partial flag variety associated with $K$. It has the structure of a finite-dimensional scheme or perfect scheme over the residue field $k$ of $\breve F$, whose geometric points are given by
\begin{align*}
X_x(b) = X_x^K(b) = \{g\in G(\breve F)/K\mid g^{-1} b \sigma(g)\in KxK\}\subset G(\breve F)/K.
\end{align*}
Here, $\sigma$ denotes the Frobenius of $\breve F/F$. One notes that the affine Deligne--Lusztig variety depends, up to isomorphism, only on the double coset $KxK\subset G(\breve F)$ and the $\sigma$-conjugacy class $[b] = \{g^{-1}b\sigma(g)\mid g\in G(\breve F)\}$. The intersection of these two sets $KxK\cap [b]$ is known as Newton stratum and its geometry is closely related to that of the affine Deligne--Lusztig variety. The most important questions, in increasing order of difficulty, are the following:
\begin{enumerate}[(Q1)]
\item When is $X_x(b)$ empty? Equivalently, when is the Newton stratum empty?
\item If $X_x(b)\neq\emptyset$, what is its dimension?
\item How many irreducible components of any given dimension does $X_x(b)$ have?
\end{enumerate}
The final question is especially interesting when the given dimension is equal to $\dim X_x(b)$, i.e.\ if one asks for the top dimensional irreducible components. The number of such irreducible components will in general be infinite. However, the $\sigma$-centralizer of $b$,
\begin{align*}
J_b(F) = \{g\in G(\breve F)\mid g^{-1}b\sigma(g) = b\},
\end{align*}
acts by left multiplication on $X_x(b)$. There are only finitely many orbits of irreducible components up to the $J_b(F)$-action, which is how (Q3) should be understood. Equivalently, one may ask for the number of top dimensional irreducible components of the Newton stratum.

The first step towards answering these three questions is to give a suitable parametrizations for the double cosets \begin{align*}K\setminus G(\breve F)/K = \{KxK\mid x\in G(\breve F)\}\end{align*} and $\sigma$-conjugacy classes \begin{align*}B(G) = \{[b]\mid b\in G(\breve F)\}.\end{align*}

We will assume that the group $G$ is split and choose a split maximal torus $T$. For this introduction, this merely provides a slightly more convenient notation. More importantly, this restriction is essential for the remainder of the article due to the dependence on the earlier work \cite{Goertz2006} with this assumption.

We first consider the case of a hyperspecial subgroup $K$. If $G$ is already defined over the ring of integers $\mathcal O_{\breve F}$ of $\breve F$, then $K=G(\mathcal O_{\breve F})$ would be a typical example of this. For hyperspecial $K$, the double cosets $K\setminus G(\breve F)/K$ are parametrized by the dominant elements of the cocharacter lattice $X_\ast(T)$. Explicitly, evaluation at a uniformizer assigns to each cocharacter $\mu\in X_\ast(T)$ a representative $\dot \mu\in T(\breve F)$, and then each double coset $KxK$ contains the representative of precisely one dominant cocharacter $\mu$. We also write $K\mu K$ for $K\dot\mu K$. Then the \emph{Cartan decomposition} is given by
\begin{align*}
G(\breve F) = \bigsqcup_{\substack{\mu \in X_\ast(T)\\\text{dominant}}} K\mu K.
\end{align*}

If $K=I$ is an Iwahori subgroup, the double cosets $I\setminus G(\breve F)/I$ are parametrized by the extended affine Weyl group $\widetilde W$. This group can be defined as $N_G(T)(\breve F)/T(\mathcal O_{\breve F})$ and it is isomorphic to the semidirect product of the Weyl group $W = N_G(T)(\breve F)/T(\breve F)$ of $G$ with the cocharacter lattice $X_\ast(T)$. Here, we write $N_G(T)$ for the normalizer of $T$ inside $G$. Choosing for each $x\in\widetilde W$ a representative $\dot x \in N_G(T)(\breve F)$, the double coset $Ix I = I\dot x I$ is independent of this choice. We obtain the \emph{Iwahori--Bruhat decomposition}
\begin{align*}
G(\breve F) = \bigsqcup_{x\in\widetilde W} IxI.
\end{align*}

For general parahoric levels, one may parametrize $K\setminus G(\breve F)/K$ by suitable double cosets in $\widetilde W$, but we will not consider this case.

The $\sigma$-conjugacy class of an element $b\in G(\breve F)$ is uniquely determined by two invariants; this is a celebrated result of Kottwitz \cite{Kottwitz1985, Kottwitz1997}. These invariants are known as the (dominant) Newton point $\nu(b) \in X_\ast(T)\otimes\mathbb Q$ and the Kottwitz point $\kappa(b) \in \pi_1(G)$. Here, $\pi_1(G) = X_\ast(T)/\mathbb Z\Phi^\vee$ is the Borovoi fundamental group and $\mathbb Z\Phi^\vee$ is the coroot lattice. 
Following He \cite[Theorem~3.7]{He2014}, one may also parametrize the set $B(G)$ using $\sigma$-conjugacy classes in $\widetilde W$.

Since the Kottwitz point $\kappa : G(\breve F)\rightarrow \pi_1(G)$ parametrizes the connected components of the partial flag variety, we get that $\kappa(x) = \kappa(b)$ is a necessary condition for $X_x(b)\neq\emptyset$. Once this condition is imposed, we may focus on comparing the above parametrization for $x$ with the Newton point $\nu(b)$.

In the case of hyperspecial level, our three initial questions have been mostly solved after concentrated effort by many researchers. For the split case under consideration, we can summarize the results as follows (while still providing references for the general case).
\begin{theorem}\label{thm:hyperspecial}
Assume that $K$ is hyperspecial. Let $[b]\in B(G)$ and $\mu\in X_\ast(T)$ be a dominant coweight.
\begin{enumerate}[(a)]
\item  The affine Deligne--Lusztig variety $X_\mu(b)$ is non-empty if and only if the \emph{Mazur inequality} is satisfied: That is, $\kappa(b) = \kappa(\mu)$ and $\nu(b)\leq \mu$ in the dominance order of $X_\ast(T)\otimes\mathbb Q$. Conjectured by Kottwitz--Rapoport, proved by \cite{Rapoport1996, Gashi2010, He2014}.
\item If $X_\mu(b)\neq\emptyset$, it is equidimensional of dimension
\begin{align*}
\dim X_\mu(b) = \frac 12\left(\langle \mu - \nu(b),2\rho\rangle - \defect(b)\right).
\end{align*}
Here, $\defect(b)$ denotes the \emph{defect} of $b$, which is defined as $\mathrm{rk}_F(G) - \mathrm{rk}_F(J_b)$, cf.\ \cite{Chai2000, Kottwitz2006}. Conjectured by Rapoport, proved by \cite{Goertz2006, Viehmann2006, Hamacher2015, Takaya2022}.
\item  The $J_b(F)$-orbits of irreducible components of $X_\mu(b)$ are in bijection to a certain basis of the weight space $M_\mu(\lambda(b))$ of the irreducible quotient $M_\mu$ of the highest weight Verma module $V_\mu$. Here, $\lambda(b)\in X_\ast(T)$ is the largest cocharacter satisfying $\lambda(b)\leq \nu(b)$ and $\kappa(\lambda(b)) = \kappa(b)$. Conjectured by Chen--Zhu, proved by \cite{Zhou2020, Nie2022}.
\end{enumerate}
\end{theorem}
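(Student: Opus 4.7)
The plan is to prove (a), (b), (c) in sequence via a combination of Hodge--Newton reduction, induction on the defect, and semi-infinite orbit techniques, mirroring the collective strategy of the references cited.

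\textbf{Part (a).} Necessity of Mazur's inequality is elementary: $\sigma$-invariance of the Kottwitz map yields $\kappa(b) = \kappa(\mu)$, and testing against each fundamental representation $V_\omega$ of $G$ reduces the inequality $\nu(b)\leq\mu$ to the classical linear-algebra Mazur inequality between Newton and Hodge polygons of a matrix. For sufficiency (the Kottwitz--Rapoport conjecture), I would argue by Hodge--Newton reduction: whenever $[b]$ is Hodge--Newton reducible with respect to a standard Levi $M$, $X_\mu(b)$ is built out of an affine Deligne--Lusztig variety for $M$, so existence for $G$ reduces to existence for $M$. This cuts the problem down to the case where $[b]$ is superbasic in a Levi. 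Superbasic classes admit an explicit description in terms of the Kottwitz map, and for them one can directly construct an element $g$ with $g^{-1}b\sigma(g) \in K\mu K$.

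\textbf{Part (b).} Assuming nonemptiness, I would prove the dimension formula by the same Hodge--Newton decomposition: if $[b]$ is Hodge--Newton reducible with respect to $M$, then $X_\mu(b)$ admits a locally trivial fibration over an analogous variety for $M$ with affine-space fibers of controlled dimension, and the formula is inherited inductively while preserving equidimensionality. The remaining Hodge--Newton-irreducible case I would attack at Iwahori level: pulling back under $\pi: G(\breve F)/I\to G(\breve F)/K$ gives $\pi^{-1}X_\mu(b) = \bigsqcup_{w \in W\mu W}X_w^I(b)$, and the dimension formulas of He and Hamacher for $X_w^I(b)$, combined with the fibration structure, yield the claimed formula. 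The correction term $\defect(b)$ enters via the rank of the reductive quotient of $J_b$, which controls the discrepancy between the naive dimension count $\langle \mu-\nu(b),2\rho\rangle$ and the true dimension.

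\textbf{Part (c).} For the count of top-dimensional irreducible components modulo $J_b(F)$, I would use Mirkovi\'c--Vilonen (MV) cycles. Recall the semi-infinite orbit $S_\lambda = U(\breve F)\dot\lambda K$ for $\lambda\in X_\ast(T)$; by geometric Satake, the top-dimensional irreducible components of $S_\lambda \cap \overline{K\mu K}$ form a distinguished basis of $V_\mu(\lambda)$. The proposal is to intersect each such MV cycle with $X_\mu(b)$, show the result is pure of the expected dimension exactly when $\lambda$ lies in the support of $V_\mu(\lambda(b))$, and then pass to $J_b(F)$-orbits. The main obstacle is showing that this construction is simultaneously exhaustive (every top-dimensional component arises this way) and non-redundant (distinct MV basis elements give distinct $J_b(F)$-orbits); matching the dimension count on both sides via the Kazhdan--Lusztig-type combinatorics of weight multiplicities is the delicate input. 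This hyperspecial argument also serves as a blueprint for the Iwahori-level analysis of the present paper, where the double Bruhat graph will play the combinatorial role of MV cycles.
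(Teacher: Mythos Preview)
This theorem is not proved in the paper. It is stated in the introduction as a summary of known results, with each part attributed to the references cited in its statement (Rapoport, Gashi, He for (a); G\"ortz--Haines--Kottwitz--Reuman, Viehmann, Hamacher for (b); Zhou--Zhu, Nie for (c)). There is no accompanying proof or proof sketch in the paper; the theorem serves purely as background motivating the paper's Iwahori-level investigations.

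Your proposal is therefore not comparable to a proof in the paper, because there is none. What you have written is a reasonable high-level caricature of the strategies actually pursued in the cited literature, but it is far too schematic to be a proof: the reduction to the superbasic case in (a) and the inductive step in (b) each require substantial work that you have only named, and the bijectivity in (c) (the Chen--Zhu conjecture) is genuinely difficult and was only settled in 2018--2020.

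It is worth noting that the paper does, in Remark~\ref{rem:chenZhu}, recover Theorem~\ref{thm:hyperspecial} as a consequence of its own main results (Corollary~\ref{cor:superregularADLV} combined with \cite[Theorem~10.1]{He2014}), but only under strong additional hypotheses: $G$ split, $F$ of equal characteristic, $[b]$ integral with regular Newton point, and $\mu$ sufficiently regular relative to $\mu-\nu(b)$. In that restricted setting the argument goes through the double Bruhat graph and Kostant's partition function rather than through Hodge--Newton reduction or MV cycles, which is a genuinely different route from anything you sketched.
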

We see that once $\kappa(\mu) = \kappa(b)$ is required, the difference $\mu-\nu(b)$ resp.\ $\mu-\lambda(b)$ determines most properties of $X_\mu(b)$, using e.g.\ the fact that the dimension of the weight space $M_\mu(\lambda(b))$ can be approximated using the dimension of $V_\mu(\lambda(b))$, which is \emph{Kostant's partition function} applied to the difference $\mu-\lambda(b)$. Under certain regularity conditions, the dimensions of the two weight spaces will be equal.

Let us now summarize the most important results known in the case of Iwahori level structure. Assume that $K=I$ is an Iwahori subgroup. Pick an element $x\in\widetilde W\cong W\ltimes X_\ast(T)$ and write it as $x = wt^\mu$ where $w\in W$ and $\mu\in X_\ast(T)$. The element $t\in F$ is the uniformizer, so the representative of $t^\mu$ in $G$ is given by the image of $t$ under the cocharacter $\mu$. We set
\begin{align*}
B(G)_x = \{[b]\in B(G)\mid IxI\cap [b]\neq \emptyset\} = \{[b]\in B(G)\mid X_x(b)\neq\emptyset\}.
\end{align*}
It should not be surprising that $B(G)_x$ contains a unique minimal and a unique maximal element, and both have been explicitly described \cite{Viehmann2014, Viehmann2021, Schremmer2022_newton}.

For any $[b]\in B(G)_x$, we know that $\dim X_x(b)\leq d_x(b)$ \cite[Theorem~2.30]{He2015}, where $d_x(b)$ is the virtual dimension defined by He \cite[Section~10]{He2014}. It is defined as
\begin{align*}
d_x(b) = \frac 12\left(\ell(x)+\ell(\eta(x))-\langle \nu(b),2\rho\rangle-\defect(b)\right).
\end{align*}
The definition of $\eta(x)\in W$ is somewhat technical, so we will not recall it here. A striking feature of the virtual dimension is that it is a simple sum of four terms, the first two only depending on $x\in\widetilde W$ and the latter two only depending on $[b]\in B(G)$. The virtual dimension behaves best when the element $x$ satisfies a certain regularity condition known as being in a shrunken Weyl chamber, cf.\ \cite[Example~2.8]{Schremmer2022_newton}.
\begin{theorem}
\label{thm:virtDim}
Let $x \in \widetilde W$, denote the largest element in $B(G)_x$ by $[b_x]$ and the smallest one by $[m_x]$.
\begin{enumerate}[(a)]
\item Suppose that $\dim X_x(b_x) = d_x(b_x)$. Then
\begin{align*}
B(G)_x = \{[b]\in B(G)\mid [m_x]\leq [b]\leq [b_x]\}.
\end{align*}
For each $[b]\in B(G)_x$, the variety $X_x(b)$ is equidimensional of dimension $\dim X_x(b) = d_x(b)$ \cite[Theorem~1.1]{Milicevic2020}. The elements $x$ satisfying this condition have been classified, cf.\ \cite[Theorem~1.2]{Schremmer2022_newton}.
\item Suppose that $x$ lies in a shrunken Weyl chamber. Then $\dim X_x(m_x) = d_x(m_x)$ \cite[Theorem~1.1~(2)]{Viehmann2021}. For each $[m_x]\leq [b]\in B(G)$ with $\nu(b_x) - \nu(b)$ sufficiently large, we have $X_x(b)\neq \emptyset$ and $\dim X_x(b) = d_x(b)$ \cite[Theorem~1.1]{He2021a}.
\end{enumerate}
\end{theorem}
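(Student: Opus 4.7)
Since the statement is explicitly an amalgamation of published results, the natural strategy is to cite the sources and verify that their hypotheses and conclusions piece together. The unifying thread is He's reduction method, which builds $X_x(b)$ inductively along a Bruhat-type tree rooted at $x$, together with the basic inequality $\dim X_x(b)\le d_x(b)$ that underlies every one of the cited theorems. I would begin by fixing notation so that the reduction tree and the virtual dimension are available uniformly in $x$ and $[b]$.

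For part (a) I would reproduce the argument of \cite{Milicevic2020}. The key observation is that the assumption $\dim X_x(b_x) = d_x(b_x)$ at the top Newton stratum forces every reduction step in the tree to be dimension-tight in a specific sense. Once this rigidity is in place it propagates simultaneously to three statements: non-emptiness of $X_x(b)$ for every $[m_x]\le[b]\le[b_x]$, the formula $\dim X_x(b) = d_x(b)$, and equidimensionality of $X_x(b)$. The combinatorial classification of the elements satisfying the hypothesis, established in \cite[Theorem~1.2]{Schremmer2022_newton}, is an independent input that controls the scope of (a); it need not be reproved here, only invoked to match scope.

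For part (b) the two halves rest on different inputs. The formula $\dim X_x(m_x) = d_x(m_x)$ for $x$ in a shrunken Weyl chamber follows from \cite{Viehmann2021} via a direct geometric construction of top-dimensional irreducible components, exploiting that in the shrunken range the minimum $[m_x]$ admits an explicit description compatible with a suitable semi-standard Levi. The claim for $[m_x]\le[b]$ with $\nu(b_x)-\nu(b)$ sufficiently large is \cite[Theorem~1.1]{He2021a}; the proof there proceeds by tracking dimension changes along reductions and using the Newton gap to avoid degeneracies that can arise for intermediate $[b]$ close to $[b_x]$.

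The main obstacle in a fully self-contained write-up would be part (a), whose conclusion is genuinely rigid: elements $x$ with $\dim X_x(b_x) < d_x(b_x)$ do exist, and for such $x$ the set $B(G)_x$ need not be an interval and $X_x(b)$ need not be equidimensional, so the propagation argument cannot be softened. In practice the proof therefore amounts to carefully matching the precise hypotheses of the cited works to the formulation above, rather than providing new mathematical content.
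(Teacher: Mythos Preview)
Your proposal is correct and matches the paper's approach: this theorem is a survey statement with the citations embedded directly in the text, and the paper provides no separate proof whatsoever. Your additional commentary on the methods behind the cited results (He's reduction tree, the propagation of tightness in \cite{Milicevic2020}, the explicit construction in \cite{Viehmann2021}) goes beyond what the paper offers but is accurate and appropriate context.
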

While in a quantitative sense \enquote{most} elements of $\widetilde W$ lie in a shrunken Weyl chamber, the examples coming from Shimura varieties typically do not. In fact, the difference between virtual dimension and dimension for basic $[b]$ can be quite large for these examples.

It should not surprise that there are many examples where dimension and virtual dimension differ. For non-shrunken elements, the notion of virtual dimension behaves poorly. E.g.\ it is not compatible with certain natural automorphisms of the reductive group $G$ that preserve the Iwahori subgroup $I$ (hence induce isomorphisms of affine Deligne--Lusztig varieties). Even for shrunken elements, we expect to have $\dim X_x(b) = d_x(b)$ only for \enquote{small} elements $[b]\in B(G)_x$.

The case (a) in Theorem~\ref{thm:virtDim} is known as the \emph{cordial} case. While (Q1) and (Q2) have \enquote{ideal} answers in this case, these descriptions are too good to be true in general. It is known that the set $B(G)_x$ will in general contain gaps and that affine Deligne--Lusztig varieties may fail to be equidimensional. If $x$ is cordial, the answer to (Q3) does not seem to be known in general.

We may summarize that (Q1) and (Q2) are well understood if $[b]\in B(G)$ is small relative to $x$ and $x$ is in a shrunken Weyl chamber, or if $x$ enjoys some exceptionally good properties.

Moreover, all three questions are perfectly understood in case $[b] = [b_x]$ is the largest $\sigma$-conjugacy class in $B(G)_x$, also known as the generic $\sigma$-conjugacy class of $IxI$. We have $\dim X_x(b_x) = \ell(x)-\langle \nu(b_x),2\rho\rangle$ \cite[Theorem~2.23]{He2015}. Up to the $J_b(F)$-action, there is only one irreducible component in $X_x(b_x)$ \cite[Lemma~3.2]{Milicevic2020}. In order to describe $[b_x]$ in terms of $x$, one may (and arguably should) use the quantum Bruhat graph \cite{Milicevic2021, Schremmer2022_newton}.

The goal of this paper and its sequel is to introduce a new concept, which generalizes the virtual dimension in the case of Theorem \ref{thm:virtDim} (b) and also generalizes the known theory for the generic $\sigma$-conjugacy class. We give answers to all three above questions in many cases that were previously intractable.

In this article, we follow one of the oldest approaches towards affine Deligne--Lusztig varieties in the affine flag variety, namely the one developed by Görtz-Haines-Kottwitz-Reuman \cite[Section~6]{Goertz2006}. They consider the case of a split group $G$, an equal characteristic field $F$ and an integral $\sigma$-conjugacy class $[b]\in B(G)$; the element $x\in \widetilde W$ is allowed to be arbitrary. They compare the geometric properties of $X_x(b)$ (especially questions (Q1)--(Q3)) to similar geometric properties of intersections, in the affine flag variety, of $IxI$ with certain \emph{semi-infinite orbits}.

Given a Borel $B = TU$, we get another decomposition of $G(\breve F)$ resp.\ the affine flag variety:
\begin{align*}
G(\breve F) =\bigsqcup_{y\in \widetilde W} U(L)yI.
\end{align*}
The individual pieces $U(L)yI$ are called \emph{semi-infinite orbits}. Each Borel containing our fixed torus $T$ gives rise to a different decomposition. In our notation, we will fix $B$ and then consider the semi-infinite orbit decompositions associated with the conjugates $uBu^{-1} = \prescript u{}{}B$ for various $u\in W$.

In order to understand $X_x(b)$ following \cite[Theorem~6.3.1]{Goertz2006}, we have to understand the intersections
\begin{align}
IxI~\cap~\prescript u{}{}U(\breve F)yI~\subset~G(L)/I\label{eq:intro1}
\end{align}
for various $u\in W$ and $y\in\widetilde W\cap [b]$. One may ask questions (Q1)--(Q3) analogously for these intersections. Unfortunately, not many answers to these questions have been given in the previous literature, leaving basic geometric properties of \eqref{eq:intro1} largely open.
There is a decomposition of \eqref{eq:intro1} into subvarieties parametrized by folded alcove walks \cite[Theorem~7.1]{Parkinson2009}, which has been used to study affine Deligne--Lusztig varieties \cite{Milicevic2019}, but these results have often been difficult to apply in practice.

One may always find an element $v\in W$ such that $IxI\subseteq \prescript v{}{}U(L)xI$, and we will use this semi-infinite orbit to approximate $IxI$. Doing so (in the proof of Theorem~\ref{thm:generalizedMV} below), we can compare the intersection \eqref{eq:intro1} to the intersection
\begin{align}
\Bigl(\prescript v{}{}U(\breve F)\cap \prescript{uw_0}{}{}U(\breve F)\Bigr)xI~\cap~\prescript u{}{}U(\breve F)yI\label{eq:intro2}
\end{align}
for $v\in W$ such that $IxI\subseteq \prescript v{}{}U(\breve F)xI$. We write $w_0\in W$ for the longest element, so that $\prescript{uw_0}{}{}B$ is the Borel subgroup opposite to $\prescript u{}{}B$. As an application of our findings, we will later see in Proposition~\ref{prop:sioIntersections} that the large parentheses in \eqref{eq:intro2} are unnecessary, that is,
\begin{align*}
\Bigl(\prescript v{}{}U(\breve F)\cap \prescript{uw_0}{}{}U(\breve F)\Bigr)xI = 
\Bigl(\prescript v{}{}U(\breve F) xI\Bigr)\cap \Bigl(\prescript{uw_0}{}{}U(\breve F)xI\Bigr).
\end{align*}

The first part of this paper studies intersections of the form \eqref{eq:intro2}. This is a question of independent interest, whose answer we want to later apply to affine Deligne--Lusztig varieties.
A different motivation to study intersections as in \eqref{eq:intro2} is the following: One may naturally ask about the intersections of arbitrary semi-infinite orbits
\begin{align}
\prescript u{}{}U(\breve F)xI~\cap~\prescript v{}{}U(\breve F)yI~\subset~G(\breve F)/I.\label{eq:intro3}
\end{align}
Observe that the group $\prescript u{}{}U(\breve F)\cap \prescript v{}{}U(\breve F)$ acts by left multiplication on \eqref{eq:intro3}, and the orbits of this action will be infinite-dimensional (unless $u=vw_0$). However, each such orbit will contain a point of \eqref{eq:intro2}, so we may see \eqref{eq:intro2} as a finite-dimensional space of representatives of \eqref{eq:intro3}. Moreover, the intersection \eqref{eq:intro3} is empty if and only if the intersection \eqref{eq:intro2} is empty.

We study the intersection \eqref{eq:intro2} for arbitrary $x,y,u,v$ in Section~\ref{sec:semiInfiniteOrbits}. By comparing the valuation of root subgroups with the extended affine Weyl group, we get a decomposition of \eqref{eq:intro2} into finitely many locally closed subvarieties, each of them irreducible and finite dimensional.

It turns out that there is very convenient combinatorial tool to parametrize the subvarieties of this decomposition and to describe their dimensions. This is the double Bruhat graph, a combinatorial object introduced by Naito--Watanabe \cite[Section~5.1]{Naito2017} in order to study periodic $R$-polynomials. The double Bruhat graph is a finite graph associated with the finite Weyl group $W$, and it generalizes the aforementioned quantum Bruhat graph. We compare the double Bruhat graph with some foundational literature on the quantum Bruhat graph in Section~\ref{sec:DBG}.

Thus, our first main result expresses the intersections of semi-infinite orbits using the double Bruhat graph.
\begin{theorem}[{Cf.\ Theorem~\ref{thm:semiInfiniteOrbitsViaPaths}}]\label{thm:introSIO}
Let $x,y\in\widetilde W$ and $u,v\in W$. Denote by $w_0\in W$ the longest element. Then the intersection
\begin{align*}
\prescript u{}{}U(\breve F)xI\cap (\prescript {uw_0}{}{}U(\breve F)\cap \prescript v{}{}U(\breve F))yI~\subset~ G(\breve F)/I
\end{align*}
has finite dimension (or is empty). We provide a decomposition into finitely many locally closed subsets of the affine flag variety, parametrized by certain paths in the double Bruhat graph. Each subset is irreducible, smooth and we calculate its dimension explicitly.
\end{theorem}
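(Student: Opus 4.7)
The plan is to parametrize the intersection by explicit affine-root-subgroup coordinates and then read off its geometric structure from paths. Every point of $\prescript u{}{}U(\breve F) xI / I$ admits a unique representative of the form $\bigl(\prod_\beta u_\beta(c_\beta)\bigr)\dot x$, where $\beta$ ranges over those affine roots of $\prescript u{}{}U(\breve F)$ whose root subgroups are non-trivial modulo $\dot x I \dot x^{-1}$, the product is taken in a fixed order, and each $c_\beta \in \breve F$ is subject to a valuation bound making the representative canonical. This exhibits $\prescript u{}{}U(\breve F) xI / I$ as an ind-affine space with coordinates $(c_\beta)$.

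Next, I would impose the condition of lying in $(\prescript{uw_0}{}{}U(\breve F)\cap\prescript v{}{}U(\breve F)) yI$. Since $\prescript{uw_0}{}{}U$ is opposite to $\prescript u{}{}U$, the relevant intersection is the product of affine root subgroups $U_\gamma$ indexed by $\gamma\in u(\Phi^-)\cap v(\Phi^+)$. The condition then becomes a sorting problem: rewrite the representative $\bigl(\prod_\beta u_\beta(c_\beta)\bigr)\dot x$ in the form $n\cdot\dot y\cdot i$, with $n$ in the prescribed product of root subgroups and $i\in I$. I would perform the sorting one factor at a time, pushing each $u_\beta(c_\beta)$ past $\dot x$ via $\dot x u_\beta(c)\dot x^{-1} = u_{x^{-1}\beta}(\cdots)$, and then either absorbing it into $I$, reflecting through a wall (which changes the Weyl component of the current representative), or leaving it on the left as an honest contribution to $\prescript{uw_0}{}{}U\cap\prescript v{}{}U$.

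Each reflection produced in the course of this sorting is precisely an edge in the double Bruhat graph, labeled by the corresponding root and directed up or down according to whether the length increases or decreases. The cumulative record of the reflections is a path whose source and sink are determined by the Weyl components of $x,y,u,v$. The stratum attached to a given path consists of those tuples $(c_\beta)$ for which the qualitative behaviour of the sorting algorithm matches the prescribed sequence: coordinates realized by an edge are forced to be non-zero of prescribed valuation (contributing an $\breve F^\times$- or affine-cell factor), while the remaining coordinates fill out a free affine space. Each stratum is therefore manifestly irreducible and smooth, and its dimension decomposes as a sum over the edges of the path, matching the edge weights of the double Bruhat graph in \cite{Naito2017}.

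The main obstacle is the combinatorial bookkeeping: one must verify that the decomposition is by locally closed subvarieties rather than merely set-theoretic, identify exactly which paths yield non-empty strata, and match the dimension coming from the sorting count with the sum of edge weights. A further subtlety is finite-dimensionality, which demands showing that the valuations $m_\beta$ of the contributing affine roots are bounded on both sides — below by the normalization of $\prescript u{}{}U(\breve F) xI$ and above by the condition of meeting the $\prescript{uw_0}{}{}U\cap\prescript v{}{}U$-orbit — so that only finitely many affine root subgroups can contribute non-trivially and the indexing set of paths is finite.
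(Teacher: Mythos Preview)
Your outline has the right shape --- root-subgroup coordinates, a sorting procedure, reflections recorded as a path --- and this is indeed what the paper does. But two ingredients are missing that the paper relies on essentially. First, you parametrize the infinite-dimensional side $\prescript u{}{}U(\breve F)xI$ and then try to cut down; the paper does the reverse, parametrizing the \emph{finite-dimensional} side $(\prescript{uw_0}{}{}U(\breve F)\cap\prescript v{}{}U(\breve F))yI$ and asking, for each of its points, which $\prescript u{}{}U(\breve F)$-orbit it lies in. This makes finite-dimensionality automatic and turns the sorting into a finite procedure. Second, and more seriously, your sorting lacks a controlling structure. The paper fixes a \emph{reflection order} $\beta_1\prec\cdots\prec\beta_{\#\Phi^+}$ chosen so that the roots indexing $\prescript{uw_0}{}{}U\cap\prescript v{}{}U$ form an initial segment (Lemma~\ref{lem:reflectionOrderProperties}), writes $g=U_{u\beta_1}(g_1)\cdots U_{u\beta_n}(g_n)$, and peels off the rightmost factor: Lemma~\ref{lem:semiInfiniteConjugation} says it either absorbs into $I$ or triggers an affine reflection, and the reflection-order property is precisely what guarantees that the commutator corrections land back in the product of the remaining root groups. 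Without this, ``push past $\dot x$ and absorb/reflect/leave'' is not a well-defined algorithm, because commutators will produce root subgroups you have not accounted for.

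Finally, the claim that each stratum is ``manifestly irreducible and smooth'' is too fast. The strata (the type varieties $T_\tau$) are cut out by valuation conditions on expressions $g_i+f_i^{(n)}(g_{i+1},\dotsc,g_{\#\Phi^+})$ where the $f_i^{(n)}$ encode iterated structure constants of $G$ (Proposition~\ref{prop:semiInfiniteTypes}); these are not visibly products of affine and punctured-affine lines. The paper proves irreducibility, smoothness, and the dimension formula $\dim T_\tau=\tfrac12(N-\ell_u(x))$ by an induction along the reflection order (Proposition~\ref{prop:Tgeom}), at each step exhibiting an explicit isomorphism $T\times(\text{affine cell})\cong T'\times(\text{affine or pointed-affine cell})$. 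That induction is where the dimension count actually happens; it is not a direct edge-weight sum.
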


Finally, in Section~\ref{sec:ADLV}, we apply these results on semi-infinite orbits to questions on affine Deligne--Lusztig varieties. We review the theory of \cite{Goertz2006} and study the approximation of $IxI$ by semi-infinite orbits. We introduce a new regularity condition on elements $x\in \widetilde W$ that we call \emph{superparabolic}. While this is a fairly restricting assumption, it covers in a quantitative sense \enquote{most} elements in the extended affine Weyl group.

\begin{theorem}[{Cf.\ Theorem~\ref{thm:adlvViaSemiInfiniteOrbits}}]\label{thm:introAdlv}
Let $x\in\widetilde W$ and choose an integral element $[b]\in B(G)$. We give a necessary condition for $X_x(b)\neq\emptyset$ and an upper bound $d$ for its dimension, both in terms of the double Bruhat graph. This improves previously known estimates such as Mazur's inequality or He's virtual dimension. We also give an upper bound for the number of $J_b(F)$-orbits of $d$-dimensional irreducible components of $X_x(b)$.

If $x=wt^\mu$ is superparabolic, and $\mu^{\dom} - \nu(b)$ is small relative to the superparabolicity condition imposed, then the above \enquote{necessary condition} for $X_x(b)\neq\emptyset$ becomes sufficient, and the above upper bound for the dimension is sharp, i.e.\ $\dim X_x(b) = d$. If moreover the Newton point of $[b]$ is regular, then the above upper bound for the number of irreducible components is sharp.
\end{theorem}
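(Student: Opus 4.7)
The plan is to build on the Görtz--Haines--Kottwitz--Reuman framework and reduce the study of $X_x(b)$ to the semi-infinite orbit intersections analyzed in Theorem~\ref{thm:introSIO}. Following the approach of \cite{Goertz2006}, for integral $[b]$ one can describe $X_x(b)$, up to the $J_b(F)$-action, in terms of intersections $IxI \cap \prescript u{}{}U(\breve F)yI$ with $y$ ranging over representatives in $\widetilde W$ of $[b]$ and $u$ ranging over $W$. For each $x = wt^\mu$, I would first choose an element $v \in W$ such that $IxI \subseteq \prescript v{}{}U(\breve F)xI$, which always exists because every Iwahori orbit sits inside a suitable semi-infinite orbit.

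Having fixed $v$, the intersection $IxI \cap \prescript u{}{}U(\breve F)yI$ is bounded above by $\prescript v{}{}U(\breve F)xI \cap \prescript u{}{}U(\breve F)yI$, and I would replace the left-hand semi-infinite orbit by the finite-dimensional space of representatives $(\prescript v{}{}U(\breve F) \cap \prescript{uw_0}{}{}U(\breve F))xI$ described in \eqref{eq:intro2}. Applying Theorem~\ref{thm:introSIO} then produces a decomposition indexed by paths in the double Bruhat graph, with each stratum irreducible, smooth, finite-dimensional, and of explicitly computed dimension. Assembling these strata over $u \in W$ and $y \in [b] \cap \widetilde W$ yields the combinatorial necessary condition for $X_x(b) \neq \emptyset$ (existence of suitable paths) and the upper bound $d$ for $\dim X_x(b)$ (the maximum dimension over contributing paths). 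The upper bound on the number of top-dimensional $J_b(F)$-orbits of irreducible components follows by counting dimension-maximizing paths, modulo the combinatorial translation action that encodes $J_b(F)$-equivalence between the chosen representatives $y \in [b] \cap \widetilde W$.

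The main obstacle, and the core content of the second half of the theorem, is proving sharpness under the superparabolicity hypothesis. The inclusion $IxI \subseteq \prescript v{}{}U(\breve F)xI$ is strict in general, so a priori top-dimensional contributions to the outer intersection might lie in the complement, spoiling both non-emptiness and the dimension estimate. The superparabolicity of $x$ is designed so that, whenever $\mu^{\dom} - \nu(b)$ stays within the allowed range, the affine root subgroups responsible for the difference $\prescript v{}{}U(\breve F)xI \setminus IxI$ are controlled in a quantitative way. I would verify by an explicit valuation argument on root subgroups, comparing the positions of the path-strata to the Iwahori defining inequalities, that the representatives produced by dimension-maximizing paths already lie in $IxI$; this upgrades the upper bound to equality and the necessary condition to a sufficient one. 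For the final assertion about regular Newton points, the regularity guarantees that the $J_b(F)$-action on the set of maximal-dimension strata has trivial stabilizers, so the number of orbits exactly matches the combinatorial count without further identification.
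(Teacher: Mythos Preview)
Your outline matches the paper's strategy: reduce via G\"ortz--Haines--Kottwitz--Reuman (Theorem~\ref{thm:ghkr}) to intersections $IxI \cap \prescript u{}{}U(L)yI$, approximate $IxI$ by a semi-infinite orbit through a length-positive element, and decompose via Theorem~\ref{thm:semiInfiniteOrbitsViaPaths} (this combination is Theorem~\ref{thm:generalizedMV}). Two points need correction.

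First, in the superparabolic sharpness argument the approximating element is not chosen once for $x$ but adapted to each $u$: the paper takes $v_x = w^{-1}u\, w_0(J)\in\LP(x)$ whenever $w^{-1}u \in \LP(x)$, and it is precisely this pairing that forces the bound $\preceq n$ on paths to exclude exactly the $\Phi_J$-directions, so that the valuation estimate of Lemma~\ref{lem:semiInfiniteRegularity} together with condition~(b) of Definition~\ref{def:superparabolic} lands every $T_p$ inside $x^{-1}IxI$. For the remaining $u$ with $w^{-1}u \notin \LP(x)$ one argues separately, using superparabolicity, that the weight condition $u^{-1}w\mu \geq \nu(b)$ already fails, so no path contributes at all. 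Your sketch collapses these two cases and fixes $v$ too early.

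Second, your explanation of the regular case is not the right mechanism. The point is not that stabilizers are trivial but that regularity of $\nu(b)$ forces $J_b(F) = T(F)$; the $T(F)$-action only shifts the translation part $\mu_z$ of $z = ut^{\mu_z}$ without changing $u$, so the sum over $u \in W$ already computes the number of $J_b(F)$-orbits on the nose. When $\nu(b)$ is not regular, $J_b(F)$ is strictly larger than $T(F)$ and can identify components indexed by different $u$, which is why only an inequality survives in general.
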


If $x$ is in a shrunken Weyl chamber, the superparabolicity condition is simply a superregularity condition like the ones typically studied in the literature, e.g.\ \cite{Milicevic2021, Milicevic2020, He2021d}. While some affine Deligne--Lusztig varieties associated with superregular elements $x$ have been described in the past, this was only possible in the cases where $[b]\in B(G)$ is either the largest element $[b_x]\in B(G)_x$ or relatively small in $B(G)_x$ (in the sense of Theorem \ref{thm:virtDim} (b)). Our result \enquote{fills the gap}, describing the geometry of $X_x(b)$ when $[b]$ is relatively large with respect to $x$. 

Moreover, there are plenty of superparabolic elements which do not lie in any shrunken Weyl chamber. In fact, in a quantitative sense, \enquote{most} elements which do not lie in shrunken Weyl chambers are superparabolic. These cases have rarely been studied in the past, and the geometry of $X_x(b)$ has only been understood in very specific situations (such as $x$ being cordial or $[b] = [b_x]$). Theorem~\ref{thm:introAdlv} fully answers our main questions for superparabolic $x\in\widetilde W$ and many $[b]\in B(G)$.

Theorem~\ref{thm:introAdlv} crucially assumes that $F$ is of equal characteristic, the group $G$ is split and the element $[b]$ is integral. The assumption on $F$ can easily be removed from the theorem by using formal arguments comparing the equal characteristic case with the mixed characteristic case, cf.\ \cite[Section~6.1]{He2014}. It is reasonable to expect that the assumption of $G$ being split can be lifted if one finds an appropriate generalization of \cite{Goertz2006} to non-split groups. It is unfortunately unclear how to lift the assumption of $[b]$ being integral for the method of this paper to work. The generalization of \cite{Goertz2006} to non-integral $\sigma$-conjugacy classes $[b]$ is given in \cite{Goertz2010}, but the connection between the latter paper and the double Bruhat graph remains unclear. In the second part of this two paper series, we will consider a different approach towards the geometry of $X_x(b)$. This approach comes without any assumptions on $G, F, [b]$, but requires superregular elements $x\in\widetilde W$ instead of the more permissible notion of superparabolic elements considered here.

By introducing the double Bruhat graph, we can capture the delicate interplay between $x\in \widetilde W$ and $[b]\in B(G)$, which is not accounted for e.g.\ by the notion of virtual dimension. Using this new language, we give new insights on the geometry of affine Deligne--Lusztig varieties, filling a conceptual vacuum of what $\dim X_x(b)$ \enquote{should be} when it cannot be virtual dimension. In this paper and its sequel, we hope to give a glimpse on how a generalization of Theorem~\ref{thm:hyperspecial} to the Iwahori level might look like. 

\section{Acknowledgements}
The author was partially supported by the German Academic Scholarship Foundation, the Marianne-Plehn programme, the DFG Collaborative Research Centre 326 \emph{GAUS} and the Chinese University of Hong Kong. I would like to thank Eva Viehmann, Xuhua He and Quingchao Yu for inspiring discussions, and Eva Viehmann again for her comments on a preliminary version of this article. I am very grateful for the carefuly reading and helpful comments by the annonymous referee.
\section{Notation}
Let $\mathbb F_q$ be a finite field and $F = \mathbb F_q\doubleparen t$ the field of formal Laurent series. We denote the usual $t$-adic valuation by $\nu_t$. Then $\mathcal O_F = \mathbb F_q\doublebrack t$ is its ring of integers. Choose an algebraic closure $k = \overline{\mathbb F_q}$ and denote by $L = \breve F = k\doubleparen t$ the completion of the maximal unramified extension of $F$. We write $\mathcal O_L = k\doublebrack t$ for its ring of integers. Denote the Frobenius of $L/F$ by $\sigma$, i.e.\
\begin{align*}
\sigma\Bigl( \sum\nolimits_i a_i t^i\Bigr) = \sum\nolimits_i (a_i)^{q}\, t^i.
\end{align*}

We consider a split reductive group $G$ defined over $\mathcal O_F$. We fix a split maximal torus and a Borel $T\subset B\subset G$ both defined over $\mathcal O_F$. As our Iwahori subgroup $I$, we choose the preimage of $B(k)$ under the projection $G(\mathcal O_L)\rightarrow G(k)$.

Denote the (co)character lattices of $T$ by $X^\ast(T)$ resp.\ $X_\ast(T)$, and the (co)root systems by $\Phi\subset X^\ast(T),\Phi^\vee\subset X_\ast(T)$. The positive roots defined by $B$ are denoted $\Phi^+$. We let $W = N_G(T)/T$ be the Weyl group of $W$ and $\widetilde W = N_G(T)(L)/T(\mathcal O_{L})$ the extended affine Weyl group. Under the isomorphism $\widetilde W\cong W\ltimes X_\ast(T)$, we write elements $x\in \widetilde W$ as $x=w t^\mu$ for $w\in W, \mu\in X_\ast(T)$.

Denote by $U$ the unipotent radical of $B$, so that $B = UT$. For each $\alpha\in \Phi$, we denote the corresponding root subgroup by $U_\alpha\subset G$. These come with an isomorphism to $U_\alpha\cong G_a$, the one-dimensional additive group over $F$, from the construction of the Bruhat-Tits building.

The set of affine roots is $\Phi_\af = \Phi\times\mathbb Z$. For each affine root $a=(\alpha,n)$, we define the affine root subgroup $U_a\subset U_\alpha(L)$ to be the set of elements of the form $U_\alpha(rt^n)$ with $r\in k$. The natural action of $\widetilde W$ on $\Phi_\af$ is given by
\begin{align*}
(w t^\mu)(\alpha,n) = (w\alpha,n-\langle\mu,\alpha\rangle).
\end{align*}

We denote the positive affine roots by $\Phi_\af^+$, these are those $a\in \Phi_\af$ with $U_a\subset I$. By abuse of notation, we denote the indicator function of positive roots by $\Phi^+$ as well. Then
\begin{align*}
a = (\alpha,n)\in \Phi_\af^+\iff n\geq\Phi^+(-\alpha) := \begin{cases}1,&\alpha\in \Phi^-,\\0,&\alpha\in \Phi^+.\end{cases}
\end{align*}
Denote the set of simple roots by $\Delta\subseteq \Phi^+$ and the set of simple affine roots by $\Delta_\af\subseteq \Phi_\af^+$. The latter are given by the roots of the form $(\alpha,0)$ for $\alpha\in \Delta$ as well as $(-\theta,1)$ whenever $\theta$ is the highest root of an irreducible component of $\Delta$.

For $x=w t^\mu\in \widetilde W$, we denote by $\LP(x)\subseteq W$ the set of length positive elements as introduced by \cite[Section~2.2]{Schremmer2022_newton}. We remark that $\LP(x)$ is always non-empty, and it collapses to one single element if and only if $x$ satisfies a mild regularity condition known as a \emph{shrunken Weyl chamber} \cite[Definition 7.2.1]{Goertz2010}. This is equivalent to $x$ lying in the lowest two--sided Kazhdan--Lusztig cell. If $\LP(x) = \{v\}$, then the element $\eta(x)$ occurring in the definition of virtual dimension above is given by $v^{-1} wv$.
\section{Semi-infinite orbits}\label{sec:semiInfiniteOrbits}
For any $u\in W$, the affine flag variety can be decomposed into semi-infinite orbits
\begin{align*}
G(L)/I = \bigsqcup_{x\in \widetilde W} \prescript u{}{}U(L)xI.
\end{align*}
Each element of the finite Weyl group $W$ yields a different decomposition of $G(L)/I$, so one may naturally ask how these decompositions are related. Given $u,v\in W$ and $x,y\in\widetilde W$, we would like to understand
\begin{align*}
\prescript u{}{}U(L)xI~\cap~\prescript v{}{}U(L)yI~\subset~G(L)/I.
\end{align*}
Up to multiplying both sides by $x^{-1}$ on the left and re-labelling, it suffices to study intersections
\begin{align*}
\prescript {uw_0}{}U(L)yI~\cap~\prescript v{}{}U(L)I~\subset~G(L)/I.
\end{align*}
Here, we write $w_0\in W$ for the longest element of the Weyl group, such that $\prescript{uw_0}{}{}B(L)$ is the Borel subgroup opposite to $\prescript u{}{}B(L)$.

Let us enumerate the positive roots as $\Phi^+ = \{\beta_1,\dotsc,\beta_{\#\Phi^+}\}$.
Then every element $g\in \prescript v{}{}U(L)$ can be written in the form $g = U_{v\beta_1}(g_1)\cdots U_{v\beta_{\#\Phi^+}}(g_{\#\Phi^+})$ with $g_1,\dotsc,g_{\#\Phi^+}\in L$. For each such element $g$, there exists a uniquely determined $y\in\widetilde W$ with $gI \in\prescript {uw_0}{}{}U(L)yI$, and we wish to compute that element $y$ in terms of the $g_i\in L$.

In order to facilitate this computation, we make two simplifications.
First, let us restrict the enumeration of positive roots $\Phi^+ = \{\beta_1,\dotsc,\beta_{\#\Phi^+}\}$ such that
\begin{align*}
\{\beta\in \Phi^+\mid (uw_0)^{-1}v\beta\in \Phi^+\} = \{\beta_{1},\dotsc,\beta_{n}\}
\end{align*}
for some $n\in\{0,\dotsc,\#\Phi^+\}$. Then
\begin{align*}
U_{v\beta_1}(g_1)\cdots U_{v\beta_n}(g_n)\in \prescript {uw_0}{}{}U(L)
\end{align*}
by choice of the labelling of the positive roots. Hence we may replace $g$ by \begin{align*}g' = U_{v\beta_{n+1}}(g_{n+1})\cdots U_{v\beta_{\#\Phi^+}}(g_{\#\Phi^+}),\end{align*}
using that $gI\in \prescript {uw_0}{}{}U(L)yI$ if and only if $g'I \in \prescript {uw_0}{}{}U(L)yI$.

For now, we expressed $g'\in \prescript v{}{}U(L)\cap \prescript{u}{}{}U(L)$ using an arbitrary enumeration of the roots
\begin{align*}
\{\beta\in \Phi^+\mid u^{-1} v\beta\in \Phi^+\} = \{\beta_{n+1},\dotsc,\beta_{\#\Phi^+}\}.
\end{align*}
Our second simplification is to use not just any such enumeration, 
but rather a specific one with extra structure, namely a \emph{reflection order}.
\begin{lemma}[{\cite[Proposition~2.13]{Dyer1993}, \cite{Papi1994}}]\label{lem:dyer}Let $\prec$ be a total order on $\Phi^+$. Then the following are equivalent:
\begin{enumerate}[(a)]
\item For all $\alpha,\beta\in \Phi^+$ with $\alpha+\beta\in\Phi^+$, we have
\begin{align*}
\alpha\prec\alpha+\beta\prec\beta\text{ or }\beta\prec\alpha+\beta\prec\alpha.
\end{align*}
\item There exists a uniquely determined reduced word for the longest element $w_0 = s_{\alpha_1}\cdots s_{\alpha_{\#\Phi^+}}$ with corresponding simple roots $\alpha_1,\dotsc,\alpha_{\#\Phi^+}\in\Delta$ such that
\begin{align*}
&\alpha_1\prec s_{\alpha_1}(\alpha_2)\prec\cdots\prec s_{\alpha_1}\cdots s_{\alpha_{\#\Phi^+-1}}(\alpha_{\#\Phi^+}).\rightqed
\end{align*}
\end{enumerate}
\end{lemma}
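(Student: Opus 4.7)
\emph{Proof proposal.} The proof hinges on the classical characterization of \emph{biconvex} subsets of $\Phi^+$ --- those $S\subseteq\Phi^+$ such that both $S$ and its complement $\Phi^+\setminus S$ are closed under addition inside $\Phi^+$ --- as precisely the inversion sets $\Phi^+(w)$ of elements $w\in W$. I would use this standard result as a black box in both directions.

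For $(b)\Rightarrow(a)$: given a reduced expression $w_0=s_{\alpha_1}\cdots s_{\alpha_N}$, standard Coxeter theory shows that the roots $\beta_i:=s_{\alpha_1}\cdots s_{\alpha_{i-1}}(\alpha_i)$ enumerate $\Phi^+$ without repetition, so the order $\prec$ described in (b) is well-defined, and each initial segment $S_k=\{\beta_1,\ldots,\beta_k\}$ equals the inversion set $\Phi^+(s_{\alpha_1}\cdots s_{\alpha_k})$, hence is biconvex. Given $\alpha,\beta\in\Phi^+$ with $\alpha+\beta\in\Phi^+$ and $\alpha\prec\beta$, let $i<k$ be their positions in $\prec$. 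Closedness of $S_k$ forces $\alpha+\beta\in S_k$, whence $\alpha+\beta\preceq\beta$; closedness of the complement $\Phi^+\setminus S_{i-1}$ forces $\alpha+\beta\notin S_{i-1}$, whence $\alpha\preceq\alpha+\beta$. Distinctness yields $\alpha\prec\alpha+\beta\prec\beta$.

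For $(a)\Rightarrow(b)$: given a total order $\prec$ satisfying the convexity condition, the key claim is that each initial segment $S_k$ is biconvex and hence an inversion set $\Phi^+(w^{(k)})$ for a unique $w^{(k)}\in W$. Biconvexity is essentially immediate from (a): if $\gamma,\delta\in S_k$ with $\gamma+\delta\in\Phi^+$, condition (a) places $\gamma+\delta$ between $\gamma$ and $\delta$, so $\gamma+\delta\preceq\max_{\prec}(\gamma,\delta)\in S_k$ and thus $\gamma+\delta\in S_k$; closedness of the complement is symmetric. The resulting chain $e=w^{(0)}\lessdot w^{(1)}\lessdot\cdots\lessdot w^{(N)}=w_0$ is a maximal chain in the weak order on $W$, which by standard Coxeter theory corresponds to a unique reduced expression $w_0=s_{\alpha_1}s_{\alpha_2}\cdots s_{\alpha_N}$ with $w^{(k)}=s_{\alpha_1}\cdots s_{\alpha_k}$; moreover, the new root added at step $k$ is precisely $\beta_k=s_{\alpha_1}\cdots s_{\alpha_{k-1}}(\alpha_k)$. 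Hence $\prec$ coincides with the order induced by this reduced expression, and uniqueness follows because the expression is reconstructed step by step from the successive $\prec$-minima.

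The main obstacle is the Björner--Brenti characterization of biconvex subsets as inversion sets; once this is available, both directions reduce to elementary closedness arguments together with standard Coxeter combinatorics. A secondary technical point is the identification of the chain of biconvex subsets with a chain of elements under right-multiplication by simple reflections --- this is standard but deserves a remark, since it relies on the fact that adjoining one root to an inversion set while preserving biconvexity amounts to right-multiplying by a simple reflection.
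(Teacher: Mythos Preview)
The paper does not supply its own proof of this lemma: it is stated with a citation to Dyer and closed immediately with a \textsc{qed} symbol (the \verb|\rightqed| at the end of the statement). So there is no in-paper argument to compare your proposal against.

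Your argument is correct and is essentially the standard one. Both directions rest on the fact that the initial segments of a reflection order are exactly the inversion sets of prefixes of a reduced word for $w_0$, which in turn is the biconvex $\Leftrightarrow$ inversion-set characterization you invoke as a black box. The only places worth tightening are notational: in $(b)\Rightarrow(a)$ you apply closedness of $S_k$ to $\alpha,\beta$, which is legitimate because both lie in $S_k$ by $i<k$, and closedness of $\Phi^+\setminus S_{i-1}$ to the same pair, legitimate since neither $\beta_i$ nor $\beta_k$ lies in $S_{i-1}$; it would not hurt to say this explicitly. In $(a)\Rightarrow(b)$, the step ``biconvex of cardinality $k$ contained in biconvex of cardinality $k+1$ implies $w^{(k+1)}=w^{(k)}s_\gamma$ for some simple $\gamma$'' is the one nontrivial Coxeter fact you are importing; you flag it, which is appropriate. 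Uniqueness of the reduced word is clear from your reconstruction, since each $\alpha_k$ is forced by the $k$th $\prec$-minimum.
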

A total order satisfying these equivalent conditions is called a \emph{reflection order}. The following important facts on reflection orders will be used frequently.
\begin{lemma}\label{lem:reflectionOrderProperties}
Let $\prec$ be a reflection order and write $\Phi^+ =\{\beta_1\prec\cdots\prec\beta_{\#\Phi^+}\}$. 
\begin{enumerate}[(a)]
\item For $1\leq a\leq b\leq \#\Phi^+$ and $u\in W$, the subsets
\begin{align*}
&U_{u\beta_a}(L) U_{u\beta_{a+1}}(L)\cdots U_{u\beta_b}(L)~\text{and}
\\&U_{u\beta_{b+1}}(L) \cdots U_{u\beta_{\#\Phi^+}}(L) U_{-u\beta_1}(L) U_{-u\beta_2}(L)\cdots U_{-u \beta_{a-1}}(L)
\end{align*}
are subgroups of $G(L)$.
\item For $n=0,\dotsc,\#\Phi^+$ and $u = s_{\beta_{n+1}}\cdots s_{\beta_{\#\Phi^+}}\in W$, we have
\begin{align*}
\{\beta\in \Phi^+\mid u^{-1}\beta\in \Phi^-\} = \{\beta_{n+1},\dotsc,\beta_{\#\Phi^+}\}.
\end{align*}
Any $u\in W$ arises in this way for some reflection order $\prec$ and some index $n\in\{0,\dotsc,\#\Phi^+\}$.
\end{enumerate}
\end{lemma}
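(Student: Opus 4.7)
My plan is to prove (b) first via a telescoping computation, then use its consequences to set up the root-system bookkeeping for (a).

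For (b), using the reduced expression $w_0 = s_{\alpha_1} \cdots s_{\alpha_{\#\Phi^+}}$ from Lemma~\ref{lem:dyer}(b) and setting $v_i := s_{\alpha_1} \cdots s_{\alpha_i}$, so that $\beta_i = v_{i-1}(\alpha_i)$ and $s_{\beta_i} = v_{i-1} s_{\alpha_i} v_{i-1}^{-1}$, I will prove by induction on $n$ that $s_{\beta_1} \cdots s_{\beta_n} = v_n^{-1}$: the inductive step is a single cancellation enabled by $v_{n-1}^{-1} v_n = s_{\alpha_n}$. Setting $n = \#\Phi^+$ gives $s_{\beta_1} \cdots s_{\beta_{\#\Phi^+}} = w_0$, so $u := s_{\beta_{n+1}} \cdots s_{\beta_{\#\Phi^+}} = v_n w_0$ and hence $\ell(u) = \#\Phi^+ - n$. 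To compute $\{\beta \in \Phi^+ : u^{-1}\beta \in \Phi^-\}$ I write $u^{-1} = w_0 v_n^{-1}$: since $w_0$ negates every root, this set is the complement in $\Phi^+$ of the inversion set of $v_n$, which by the standard description of inversion sets of reduced expressions equals $\{\beta_1, \ldots, \beta_n\}$. For the converse, given any $u \in W$, I set $v := u w_0$; since $v \leq w_0$ in Bruhat order, any reduced word for $v$ extends to a reduced word for $w_0$, and Lemma~\ref{lem:dyer}(b) applied to the extended word produces a reflection order with the required property at index $n = \ell(v)$.

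For (a), I will realize both root sets as intervals in reflection orders on positive systems of $\Phi$. For the first set $S_1 := \{u\beta_a, \ldots, u\beta_b\}$, the identity $v_{a-1}^{-1} v_b = s_{\alpha_a} \cdots s_{\alpha_b}$ shows that $\{\beta_a, \ldots, \beta_b\}$ is the image under $v_{a-1}$ of the inversion set of this reduced word; applying $uv_{a-1}$ transports this to $S_1$, which is therefore closed under root addition in $\Phi$ and contains no opposite pair. For the second set $S_2 := \{u\beta_{b+1}, \ldots, u\beta_{\#\Phi^+}, -u\beta_1, \ldots, -u\beta_{a-1}\}$, the cleanest viewpoint is a cyclic extension: concatenating $\beta_1, \ldots, \beta_{\#\Phi^+}, -\beta_1, \ldots, -\beta_{\#\Phi^+}$ yields a sequence on $\Phi$ in which any window of length $\#\Phi^+$ is the positive system of some Weyl chamber and the enumeration restricted to any sub-window is a reflection order for that chamber; applying $u$ realizes $S_1$ and $S_2$ as complementary intervals in such a reflection order, and $S_2$ therefore inherits the same closure and no-opposite-pair properties. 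Given these properties, the subgroup statement follows from the Chevalley commutator formula together with Lemma~\ref{lem:dyer}(a) (iterated to control higher positive combinations $p\gamma + q\delta$ in multiply-laced types): every commutator term lands strictly between its two arguments in the reflection order, so the ordered product is closed under multiplication.

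The main obstacle is the cyclic-extension step in (a): one must check that the concatenated sequence $\beta_1, \ldots, \beta_{\#\Phi^+}, -\beta_1, \ldots, -\beta_{\#\Phi^+}$ restricts to a reflection order on each length-$\#\Phi^+$ window, and spell out the iteration of Lemma~\ref{lem:dyer}(a) needed to cover combinations $p\gamma + q\delta$ with $p, q > 0$ in multiply-laced root systems. Both are standard consequences of the reflection-order axioms but require careful bookkeeping; once in place, the subgroup property reduces to the classical fact that an ordered product of root subgroups along a reflection order is a subgroup of $G(L)$.
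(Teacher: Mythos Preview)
Your argument for (b) is essentially the paper's: both compute $uw_0 = s_{\alpha_1}\cdots s_{\alpha_n}$ (you via the telescoping identity $s_{\beta_1}\cdots s_{\beta_n} = v_n^{-1}$, the paper via $s_{\beta_n}\cdots s_{\beta_1} = s_{\alpha_1}\cdots s_{\alpha_n}$), then read off the inversion set. One small correction: the converse does not follow from $v\leq w_0$ in Bruhat order (that gives subwords, not prefixes) but from length additivity $\ell(w_0)=\ell(v)+\ell(v^{-1}w_0)$, which lets you concatenate any reduced word for $v$ with one for $v^{-1}w_0$.

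For (a) you take a genuinely different route. The paper proves closedness of the two root sets by a short direct case analysis: for $S_1=\{\beta_a,\dotsc,\beta_b\}$ the reflection-order axiom gives closure immediately; for $S_2=\{\beta_{b+1},\dotsc,\beta_{\#\Phi^+},-\beta_1,\dotsc,-\beta_{a-1}\}$ one checks the mixed case $\alpha=\beta_i$ ($i>b$), $\beta=-\beta_j$ ($j<a$) by writing $\alpha=(\alpha+\beta)+(-\beta)$ or $-\beta=\alpha+(-\alpha-\beta)$ and applying the axiom once. Then one invokes the standard fact (e.g.\ Springer, Prop.~8.2.3) that a closed set of roots with no opposite pair yields a subgroup. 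Your approach instead realizes $S_1$ as a transported inversion set and $S_2$ as a sub-window of the cyclic concatenation $\beta_1,\dotsc,\beta_{\#\Phi^+},-\beta_1,\dotsc,-\beta_{\#\Phi^+}$. This is correct: the length-$\#\Phi^+$ window starting at position $k+1$ is indeed the positive system $u_k\Phi^-$ with $u_k=s_{\beta_{k+1}}\cdots s_{\beta_{\#\Phi^+}}$, and the induced order is a reflection order there. However, verifying this cyclic-extension claim (which you defer as ``standard'' bookkeeping) is precisely the same three-case analysis the paper carries out directly, so you have not actually saved work but repackaged it. The paper's approach is shorter and more transparent; yours has the conceptual payoff of exhibiting the cyclic symmetry, which is nice but not needed here.
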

\begin{proof}
\begin{enumerate}[(a)]
\item If $\alpha,\beta\in \{\beta_{a},\dotsc,\beta_b\}$, then any positive linear combination of $\alpha,\beta$ that lies in $\Phi^+$ will also lie in this set. The fact that the first subset of $G(L)$ is a subgroup thus follows from the known theory of root subgroups \cite[Proposition~8.2.3]{Springer1998}.

Let us study the second subset. If both $\alpha,\beta$ lie in $\{\beta_{b+1},\dotsc,\beta_{\#\Phi^+}\}$, or both lie in $\{-\beta_1,\dotsc,-\beta_{a-1}\}$, so will their sum (if it is in $\Phi$). So suppose that $\alpha\in\{\beta_{b+1},\dotsc,\beta_{\#\Phi^+}\}$ and $\beta\in\{-\beta_1,\dotsc,-\beta_{a-1}\}$ satisfy $\alpha+\beta\in\Phi$.

If $\alpha+\beta\in\Phi^+$, then $\alpha = (\alpha+\beta)+(-\beta)$ is expressed as the sum of two positive roots, which cannot both be $\prec\alpha$. Hence $\alpha+\beta\succ\alpha$, thus $\alpha+\beta\in\{\beta_{b+1},\dotsc,\beta_{\#\Phi^+}\}$ as well.

If $\alpha+\beta\in\Phi^-$, then $-\beta = \alpha-(\alpha+\beta)$ is expressed as the sum of two positive roots, which cannot both be $\succ-\beta$. Hence $-(\alpha+\beta)\prec-\beta$, so $\alpha+\beta\in\{-\beta_1,\dotsc,-\beta_{a-1}\}$. The claim follows as above.
\item Let $w_0 = s_{\alpha_1}\cdots s_{\alpha_{\#\Phi^+}}$ be the reduced word such that $\beta_i = s_{\alpha_1}\cdots s_{\alpha_{i-1}}(\alpha_i)$ for $i=1,\dotsc,\#\Phi^+$. Then
\begin{align*}
uw_0 &= s_{\beta_{n+1}}\cdots s_{\beta_{\#\Phi^+}} \cdots s_{\beta_{\#\Phi^+}}\cdots s_{\beta_1}
\\&=s_{\beta_n}\cdots s_{\beta_1} = s_{\alpha_1}\cdots s_{\alpha_n}.
\end{align*}
Hence
\begin{align*}
\{\beta\in \Phi^+\mid u^{-1}\beta\in \Phi^-\} &= \{\beta\in \Phi^+\mid (uw_0)^{-1}\beta\in \Phi^+\}
\\&=\Phi^+\setminus \{\beta\in \Phi^+\mid (uw_0)^{-1}\beta\in \Phi^-\}
\\&=\Phi^+\setminus\{\alpha_1,s_{\alpha_1}(\alpha_2),\dotsc, s_{\alpha_1}\cdots s_{\alpha_{n-1}}(\alpha_n)\}
\\&=\Phi^+\setminus\{\beta_1,\dotsc,\beta_n\} = \{\beta_{n+1},\dotsc,\beta_{\#\Phi^+}\}.
\end{align*}
This shows the first claim. Now for any given $u\in W$, we can find some reduced word $uw_0 = s_{\alpha_1}\cdots s_{\alpha_n}$. Continue it to the right to a reduced word for $w_0$ to obtain the desired reflection order.\qedhere
\end{enumerate}
\end{proof}
So when studying intersections as above, i.e.\
\begin{align}
\prescript {uw_0}{}{}U(L)yI~\cap~(\prescript u{}{}U(L)\cap \prescript v{}{}U(L))I~\subset~G(L)/I,\label{eq:typicalIntersectionSIO}
\end{align}
we may write
\begin{align*}
\prescript{u}{}U(L)\cap \prescript v{}U(L) = U_{u\beta_1}(L)\cdots U_{u\beta_n}(L)
\end{align*}
for a suitable reflection order $\beta_1\prec\cdots\prec\beta_{\#\Phi^+}$. With this notation, the fundamental method to evaluate intersections as in \eqref{eq:typicalIntersectionSIO} is given by the following lemma.

\begin{lemma}\label{lem:semiInfiniteConjugation}
Let $\prec$ be a reflection order and write $\Phi^+ = \{\beta_1\prec\cdots\prec\beta_{\#\Phi^+}\}$. Let $x\in\widetilde W,u\in W$ and $1\leq n\leq \#\Phi^+$.
Consider an element of the form
\begin{align*}
g = U_{u\beta_1}(g_1)\cdots U_{u\beta_{n}}(g_{n})\in \prescript u{}{}U(L).
\end{align*}
Denote $m = \nu_L(g_{n})\in\mathbb Z$, $b = (u\beta_{n},m)\in \Phi_\af$ and $u' = us_{\beta_{n}}$.
\begin{enumerate}[(a)]
\item If $x^{-1}b\in \Phi_\af^+$, then
\begin{align*}
gxI = 
U_{u\beta_1}(g_1)\cdots U_{u\beta_{n-1}}(g_{n-1})xI/I\in G(L)/I.
\end{align*}
\item If $x^{-1}b\in \Phi_\af^-$, then there are polynomials $f_{1},\dotsc,f_{n-1}$ with
\begin{align*}
f_i \in \mathbb Z[X_{i+1},\dotsc,X_{n-1},Y],
\end{align*}
allowing us to write
\begin{align*}
&gxI \in U_{-u\beta_n}(L)\cdots U_{-u\beta_{\#\Phi^+}}(L)U_{u\beta_1}(\tilde g_1)\cdots U_{u\beta_{n-1}}(\tilde g_{n-1}) r_b xI/I \subset G(L)/I,
\end{align*}
where
\begin{align*}
\tilde g_i = g_i + f_i(g_{i+1},\dotsc,g_{n-1},g_{n}^{-1})\in L.
\end{align*}
The polynomial $f_i$ is a sum of monomials
\begin{align*}
\varphi X_{i+1}^{e_{i+1}}\cdots X_{n-1}^{e_{n-1}}Y^{f}
\end{align*}
satisfying the conditions $\varphi\in\mathbb Z$ and
\begin{align*}
e_{i+1}\beta_{i+1}+\cdots +e_{n-1}\beta_{n-1} - f\beta_{n} = \beta_i.
\end{align*}
It depends only on the datum of $G,B,T,\prec$, but not on $g$ nor $x$. We have
\begin{align*}
&U_{u\beta_1}(g_1)\cdots U_{u\beta_{n-1}}(g_{n-1}) U_{-u\beta_n}(g_n^{-1})\\&\qquad \in U_{-u\beta_n}(g_n^{-1}) U_{-u\beta_{n+1}}(L)\cdots U_{-u\beta_{\#\Phi^+}}(L) U_{u\beta_1}(\tilde g_1)\cdots U_{u\beta_{n-1}}(\tilde g_{n-1}).
\end{align*}
\end{enumerate}
\end{lemma}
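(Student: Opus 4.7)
For part (a), I would argue directly from positivity of affine roots. Since $\nu_L(g_n)\geq m$, the element $U_{u\beta_n}(g_n)$ lies in the (closure of the) product of affine root subgroups $U_{(u\beta_n,m')}$ with $m'\geq m$. The positivity condition $(u\beta_n,m')\in\Phi_\af^+$ depends only on the sign of $u\beta_n$ and is monotone in $m'$; under $x^{-1}$ (with $x=wt^\mu$), each such affine root is sent to $(w^{-1}u\beta_n,\,m'+\langle\mu,w^{-1}u\beta_n\rangle)$, so the hypothesis $x^{-1}b\in\Phi_\af^+$ propagates to every $m'\geq m$. Hence $x^{-1}U_{u\beta_n}(g_n)x\subseteq I$, giving $U_{u\beta_n}(g_n)\,xI=xI$ and the claim of (a) upon substituting back.

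For part (b), the engine is the rank-one $SL_2$-identity
\begin{align*}
U_{u\beta_n}(g_n)=U_{-u\beta_n}(g_n^{-1})\cdot (u\beta_n)^\vee(g_n)\dot s_{u\beta_n}\cdot U_{-u\beta_n}(g_n^{-1}).
\end{align*}
Writing $g_n=ct^m(1+h)$ with $c\in k^\times$ and $h\in t\mathcal O_L$, the middle factor $(u\beta_n)^\vee(g_n)\dot s_{u\beta_n}$ is a representative of $r_b\in\widetilde W$ up to an element of $T(\mathcal O_L)$. Conjugating this $T(\mathcal O_L)$-element through the rightmost $U_{-u\beta_n}(g_n^{-1})$ merely rescales its argument while preserving valuation $-m$, after which it absorbs into $I$. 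A positivity check dual to part (a) shows that $x^{-1}b\in\Phi_\af^-$ is precisely what is needed so that $x^{-1}(-u\beta_n,m'')\in\Phi_\af^+$ for every $m''\geq-m$ (a short case distinction on the sign of $w^{-1}u\beta_n$); hence the trailing $U_{-u\beta_n}(\cdot)\cdot xI$ collapses to $xI$, leaving $U_{u\beta_n}(g_n)\cdot xI=U_{-u\beta_n}(g_n^{-1})\cdot r_b\cdot xI$.

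It remains to commute $U_{-u\beta_n}(g_n^{-1})$ leftwards through $U_{u\beta_{n-1}}(g_{n-1})\cdots U_{u\beta_1}(g_1)$ via Chevalley commutators. Each correction factor lives in $U_{u(j\beta_i-k\beta_n)}$ for integers $j,k\geq 1$ with $j\beta_i-k\beta_n\in\Phi$; the convexity property of reflection orders (a positive rational combination of $\alpha,\beta\in\Phi^+$ that is itself in $\Phi^+$ lies between $\alpha$ and $\beta$ under $\prec$, cf.\ \cite{Dyer1993}) forces $j\beta_i-k\beta_n=\beta_l$ with $l<i$, or $=-\beta_l$ with $l>n$. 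These are precisely the root subgroups appearing on the right-hand side of the claim: the $U_{u\beta_l}$ ($l<i$) corrections absorb into existing factors, redefining $g_l\mapsto\tilde g_l$, while the $U_{-u\beta_l}$ ($l>n$) corrections accumulate between $U_{-u\beta_n}$ and the $U_{u\beta_{\bullet}}$-block. The same convexity applied to pairs of roots in $\{-u\beta_n,\ldots,-u\beta_{\#\Phi^+},u\beta_1,\ldots,u\beta_{n-1}\}$ shows it is additively closed in $\Phi$, so these root subgroups generate an honest subgroup (analogous to Lemma~\ref{lem:reflectionOrderProperties}(a)) inside which the iterated commutations take place with a unique ordered normal form. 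The weight condition on $f_i$ is then immediate: a monomial $\varphi X_{i+1}^{e_{i+1}}\cdots X_{n-1}^{e_{n-1}}Y^f$ records a chain of commutators landing in the root subgroup indexed by $\sum_j e_j u\beta_j - f u\beta_n$, forcing $\sum_j e_j\beta_j-f\beta_n=\beta_i$, with $\varphi\in\mathbb Z$ coming from integrality of the Chevalley structure constants. The main anticipated obstacle is exactly this uniqueness-of-normal-form point, which the subgroup/closure argument resolves.
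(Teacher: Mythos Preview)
Your proposal is correct and follows essentially the same route as the paper: part (a) is the immediate absorption into $I$ via the affine root condition, and part (b) combines the rank-one $SL_2$ identity with iterated Chevalley commutators, controlled by the closure properties of the set $\{-\beta_n,\dotsc,-\beta_{\#\Phi^+},\beta_1,\dotsc,\beta_{n-1}\}$ under the reflection order (Lemma~\ref{lem:reflectionOrderProperties}). The only cosmetic differences are that the paper obtains $U_{u\beta_n}(g_n)xI = U_{-u\beta_n}(g_n^{-1})r_bxI$ by the insertion trick $U_{u\beta_n}(g_n)U_{-u\beta_n}(-g_n^{-1})xI$ rather than the direct $SL_2$ decomposition (so no explicit $T(\mathcal O_L)$-bookkeeping is needed), and the paper phrases the commutator step as an explicit iteration producing terms indexed by $\beta_{i_j,\dotsc,i_n}=i_j\beta_j+\cdots-i_n\beta_n$ rather than invoking the normal-form-in-a-subgroup argument; both arrive at the same weight constraint on the monomials of $f_i$.
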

\begin{proof}
The statement in (a) is immediately verified, since $x^{-1}(u\beta_n,\nu_L(g_n))\in \Phi_\af^+$ is equivalent to $\prescript {x^{-1}}{}{}U_{u\beta_n}(g_n)\in I$. So let us prove (b).

Using the fact $x^{-1}(-b)\in \Phi_\af^+$, we get
\begin{align*}
U_{u\beta_{n}}(g_{n})xI &= U_{u\beta_{n}}(g_{n}) U_{-u\beta_{n}}(-g_{n}^{-1})xI.
\intertext{Following the usual combinatorics of root subgroups, e.g.\ \cite[Lemma~8.1.4]{Springer1998} or \cite[Equation~(7.6)]{Parkinson2009}, we re-write this as}
\cdots&=U_{-u\beta_{n}}(g_{n}^{-1})U_{-u\beta_{n}}(-g_{n}^{-1})U_{u\beta_{n}}(g_{n}) U_{-u\beta_{n}}(-g_{n}^{-1})xI
\\&=U_{-u\beta_{n}}(g_{n}^{-1}) (-u\beta_{n})^\vee(-g_{n}^{-1}) n_{-u\beta_{n}}xI
\\&=U_{-u\beta_{n}}(g_{n}^{-1}) r_b xI.
\end{align*}
Here, the cocharacter $(-u\beta_{n})^\vee$ is understood as function $L\rightarrow T(L)$ and $n_{-u\beta_{n}} \in N_G(T)(L)$ is a representative of the reflection $s_{-u\beta_{n}}\in W$.

It remains to evaluate
\begin{align*}
g' := U_{u\beta_1}(g_1)\cdots U_{u\beta_{n-1}}(g_{n-1}) U_{-u\beta_{n}}(g_{n}^{-1}) \in \prescript{u'}{}{}U(L),
\end{align*}
where we write $u' = u s_{\beta_n}\cdots s_{\beta_{\#\Phi^+}}\in W$.
By \cite[Proposition~8.2.3]{Springer1998}, we may write
\begin{align*}
&U_{u\beta_{n-1}}(g_{n-1}) U_{-u\beta_{n}}(g_{n}^{-1}) =  U_{-u\beta_{n}}(g_{n}^{-1})\Bigl[\prod_{i,j} U_{\beta_{i,j}}(c_{i,j} g_{n}^{-i} g_{n-1}^j)\Bigr]
U_{u\beta_{n-1}}(g_{n-1}),
\end{align*}
where the product is taken over all indices $i,j\in\mathbb Z_{\geq 1}$ with $\beta_{i,j}:=-iu\beta_{n} + ju\beta_{n-1}\in\Phi$. The product can be evaluated in any fixed order, up to changing the structure constants $c_{i,j}$.
By the construction of the Bruhat-Tits building, the structure constants are in $\mathbb Z$, cf.\ \cite[Example 6.1.3 (b)]{Bruhat1972} or \cite[Chapter~9]{Springer1998}.

We want to iterate this procedure. We claim for all $1\leq j\leq n$ that we can write
\begin{align*}
&U_{u\beta_j}(g_j)\cdots U_{u\beta_{n-1}}(g_{n-1}) U_{-u\beta_n}(g_n^{-1}) \\&= U_{-u\beta_n}(g_n^{(j)})\cdots U_{-u\beta_{\#\Phi^+}}(g_{\#\Phi^+}^{(j)}) U_{u\beta_1}(g_1^{(j)})\cdots U_{u \beta_{n-1}}(g_{n-1}^{(j)})\tag{$\ast$}
\end{align*}
subject to the conditions
\begin{align*}
g_n^{(j)} &= g_n^{-1},
\\g_i^{(j)}&=g_i + f_i^{(j)}(g_{i+1},\dotsc,g_{n-1}, g_n^{-1})\text{ for }j\leq i<n,
\\g_i^{(j)}&=f_i^{(j)}(g_{j+1},\dotsc,g_{n-1}, g_n^{-1})\text{ for }1\leq i< j\text{ or }n<i\leq \#\Phi^+.
\end{align*}
Here, the polynomials $f_i^{(j)}$ are required to have the analogous properties as claimed in the lemma, i.e.\ the monomial $\varphi X_{j+1}^{e_{j+1}}\cdots X_{n-1}^{e_{n-1}} Y^f$ may only occur $f_i^{(j)}$ if
\begin{align*}
e_{j+1} \beta_{j+1}+\cdots + e_{n-1} \beta_{n-1} - f \beta_n = \begin{cases}\beta_i,& i<n,\\
-\beta_i,& i>n.
\end{cases}
\end{align*}

This long claim is trivially verified for $j=n$. In an inductive step, assume it has been proved for some $1<j\leq n$. We multiply the right--hand side of $(\ast)$ by $U_{u\beta_{j-1}}(g_{j-1})$ and apply \cite[Proposition~8.2.3]{Springer1998} to sort the resulting product into our usual order. By Lemma~\ref{lem:reflectionOrderProperties}, the result indeed lies in $U_{-u\beta_n}(L)\cdots U_{-u\beta_{\#\Phi^+}}(L)U_{u\beta_1}(L)\cdots U_{u\beta_{n-1}}(L)$, so this defines the elements $g_i^{(j-1)}\in L$ for $i=1,\dotsc,\#\Phi^+$.

It is straightforward to see (but cumbersome to write down in full details) that our required conditions for the $g^{(j-1)}_i$ are true precisely because they are true for the $g^{(j)}_i$. This finishes the induction. Specializing to $j=1$ proves the lemma.
\end{proof}
We want to iterate this lemma. Doing so, we obtain the following result.
\begin{proposition}\label{prop:semiInfiniteTypes}
Let $\Phi^+ = \{\beta_1\prec\cdots\prec\beta_{\#\Phi^+}\}$ and $u\in W$. Pick $gI \in\prescript{u}{}U(L)I/I$.
For each $n=0,\dotsc,\#\Phi^+$, consider the Borel subgroup of $G$ associated with the element $us_{\beta_{n+1}}\cdots s_{\beta_{\#\Phi^+}}\in W$ and the corresponding decomposition of the affine flag variety into semi-infinite orbits.
This allows us to define
 $x_0,\dotsc,x_{\#\Phi^+}=1\in\widetilde W$ to be the uniquely determined elements such that
\begin{align*}
gI \in \Bigl(\prescript{u s_{\beta_{n+1}}\cdots s_{\beta_{\#\Phi^+}}}{}{}U(L)\Bigr)x_{n} I/I,\qquad n=0,\dotsc,\#\Phi^+.
\end{align*}
Define
\begin{align*}
\{n_1<\cdots<n_N\} :=\{n\in\{1,\dotsc,\#\Phi^+\}\mid x_n\neq x_{n-1}\}.
\end{align*}
Choose a representative of $gI$ in $\prescript u{}{}U(L)$ and write
\begin{align*}
gI = U_{u\beta_1}(g_1)\cdots U_{u\beta_{\#\Phi^+}}(g_{\#\Phi^+}) I,\qquad g_1,\dotsc,g_{\#\Phi^+}\in L.
\end{align*}
Then for $n=0,\dotsc,\#\Phi^+$, we have the following:
\begin{enumerate}[(a)]
\item We may write
\begin{align*}
gI \in U_{-u\beta_{n+1}}(L)\cdots U_{-u\beta_{\#\Phi^+}}(L) U_{u\beta_1}(g^{(n)}_1)\cdots U_{u\beta_{n}}(g^{(n)}_{n})x_n I
\end{align*}
for elements $g^{(n)}_1,\dotsc,g^{(n)}_{n}\in L$, which are determined uniquely through polynomial identities
\begin{align*}
g^{(n)}_i - g_i = f^{(n)}_i(g_{i+1}^{(i+1)},\dotsc,g_{\#\Phi^+}^{(\#\Phi^+)}),\qquad f^{(n)}_i \in \mathbb Z[X_{i+1}^{\pm 1},\dotsc,X_{\#\Phi^+}^{\pm 1}]
\end{align*}
subject to the following condition: The polynomial $f^{(n)}_i$ depends only on the datum of $G,B,T,u,\prec$ and the indices in $\{n_1,\dotsc,n_N\}\cap \{n+1,\dotsc,\#\Phi^+\}$.
 It is a sum of monomials
\begin{align*}
\varphi X_{i+1}^{e_{i+1}}\cdots X_{\#\Phi^+}^{e_{\#\Phi^+}},\qquad \varphi,e_{i+1},\dotsc,e_{\#\Phi^+}\in\mathbb Z
\end{align*}
subject to the conditions $\beta_i = e_{i+1}\beta_{i+1}+\cdots + e_{\#\Phi^+}\beta_{\#\Phi^+}$ and
\begin{align*}
\forall h\in\{i+1,\dotsc,\#\Phi^+\}:~e_h<0\implies h\in\{n+1,\dotsc,\#\Phi^+\}\cap \{n_1\dotsc,n_N\}.
\end{align*}
\item Suppose that $n\geq 1$. If $g^{(n)}_n=0$, then $n\notin\{n_1,\dotsc,n_N\}$ and $x_{n-1} = x_n$
Otherwise, define \begin{align*}b_n := (u\beta_{n},\nu_L(g^{(n)}_{n}))\in\Phi_\af.\end{align*}

If $x_n^{-1}(b_n)\in\Phi_\af^+$, then $n\notin\{n_1,\dotsc,n_N\}$ and $x_{n-1} = x_n$.

If $x_n^{-1}(b_n)\in\Phi_\af^-$, then $n\in\{n_1,\dotsc,n_N\}$ and $x_{n-1} = r_b x_n$.
\item The values of $\nu_L(g^{(n_h)}_{n_h})\in\mathbb Z$ for $h\in\{1,\dotsc,N\}$ depend only on $gI\in G(L)/I$, and not on the chosen representative $U_{u\beta_1}(g_1)\cdots U_{u\beta_{\#\Phi^+}}(g_{\#\Phi^+})\in G(L)$.
\end{enumerate}
\end{proposition}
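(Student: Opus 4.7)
The plan is to prove (a) and (b) together by downward induction on $n$, from $n=\#\Phi^+$ down to $n=0$, and then to derive (c) from the fact that the elements $x_n$ are intrinsic to $gI$. The base case $n=\#\Phi^+$ is immediate: since $g\in\prescript{u}{}{}U(L)$, we have $x_{\#\Phi^+}=1$, and the claimed decomposition in (a) is exactly the chosen factorization $g=U_{u\beta_1}(g_1)\cdots U_{u\beta_{\#\Phi^+}}(g_{\#\Phi^+})$ with all polynomials $f_i^{(\#\Phi^+)}$ equal to zero.

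For the inductive step from level $n$ to level $n-1$, I would apply Lemma~\ref{lem:semiInfiniteConjugation} to the inner block $U_{u\beta_1}(g_1^{(n)})\cdots U_{u\beta_n}(g_n^{(n)})x_n I$. If $g_n^{(n)}=0$ the last factor disappears trivially; if $g_n^{(n)}\neq 0$ but $x_n^{-1}b_n\in\Phi_\af^+$, Lemma~\ref{lem:semiInfiniteConjugation}(a) absorbs it into $I$. In both of these non-jump cases one takes $g_i^{(n-1)}=g_i^{(n)}$ and $x_{n-1}=x_n$, and the prescribed left block $U_{-u\beta_n}(L)\cdots U_{-u\beta_{\#\Phi^+}}(L)$ accommodates the previous left block by inserting the identity in the new $U_{-u\beta_n}(L)$ slot. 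In the jump case where $g_n^{(n)}\neq 0$ and $x_n^{-1}b_n\in\Phi_\af^-$, Lemma~\ref{lem:semiInfiniteConjugation}(b) produces the factor $U_{-u\beta_n}((g_n^{(n)})^{-1})$ on the left, the Laurent polynomial update $\tilde g_i$ on the right, and the shift $x_{n-1}=r_{b_n}x_n$; Lemma~\ref{lem:reflectionOrderProperties}(a) ensures that the enlarged left block again lies in $U_{-u\beta_n}(L)\cdots U_{-u\beta_{\#\Phi^+}}(L)$.

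The main technical content is the Laurent polynomial bookkeeping needed to verify that $f_i^{(n-1)}$ has the required shape. In the jump subcase, I substitute the inductive forms $g_j^{(n)}=g_j+f_j^{(n)}(g_{j+1},\ldots,g_{\#\Phi^+})$ into $\tilde g_i = g_i^{(n)}+\tilde f_i(g_{i+1}^{(n)},\ldots,g_{n-1}^{(n)},(g_n^{(n)})^{-1})$. The root-degree condition $\beta_i=\sum_h e_h\beta_h$ is preserved by transitivity, because each inner $f_j^{(n)}$ has root-degree $\beta_j$ and $\tilde f_i$ already expresses $\beta_i$ as a combination of $\beta_{i+1},\ldots,\beta_{n-1}$ and $-\beta_n$. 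Negative exponents on $X_h$ in $f_i^{(n-1)}$ can arise only for indices $h$ that are already jump positions at levels strictly greater than $n$ (inherited from the inner $f_j^{(n)}$ via the inductive hypothesis) or for $h=n$ itself (from the $(g_n^{(n)})^{-1}$-slot of $\tilde f_i$); in the latter situation $n$ indeed becomes the new jump index, so the condition holds. In the non-jump cases $f_i^{(n-1)}=f_i^{(n)}$ and there is nothing to check.

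For part (c), each $x_n$ is by construction the unique element of $\widetilde W$ such that $gI\in\prescript{us_{\beta_{n+1}}\cdots s_{\beta_{\#\Phi^+}}}{}{}U(L)x_n I$, so it depends only on $gI$ and not on the chosen representative. At a jump index $n_h$ the jump subcase of the inductive step applies, so the reflection $r_{b_{n_h}}=x_{n_h-1}x_{n_h}^{-1}$ is intrinsic. Since $b_{n_h}=(u\beta_{n_h},\nu_L(g_{n_h}^{(n_h)}))$ has its vector part $u\beta_{n_h}$ fixed by the combinatorial data $(u,\prec,n_h)$, the valuation $\nu_L(g_{n_h}^{(n_h)})$ is recovered as the unique integer making $b_{n_h}$ the lift of $r_{b_{n_h}}$ whose root-component is $u\beta_{n_h}$ rather than $-u\beta_{n_h}$. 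The hardest step is the polynomial bookkeeping in the preceding paragraph: keeping both the degree condition and the restriction of negative exponents to jump indices intact through the composition requires careful monomial tracking, though it becomes a routine combinatorial verification once the induction is organized properly.
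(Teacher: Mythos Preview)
Your proposal is correct and follows essentially the same approach as the paper: downward induction on $n$ using Lemma~\ref{lem:semiInfiniteConjugation} for the inductive step, with the same case split into non-jump and jump subcases, and the same derivation of (c) from the fact that the $x_n$ are intrinsic to $gI$. You are slightly more explicit than the paper about the Laurent polynomial bookkeeping and about invoking Lemma~\ref{lem:reflectionOrderProperties}(a) to absorb the enlarged left block, but the argument is the same.
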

\begin{proof}
By Lemma~\ref{lem:reflectionOrderProperties}, we get
\begin{align*}
s_{\beta_{n+1}}\cdots s_{\beta_{\#\Phi^+}}\Phi^+ =\{-\beta_{n+1},\dotsc,-\beta_{\#\Phi^+},\beta_1,\dotsc,\beta_{n}\}.
\end{align*}
So indeed
\begin{align*}
\prescript{u s_{\beta_{n+1}}\cdots s_{\beta_{\#\Phi^+}}}{}{}U(L) = U_{-u\beta_{n+1}}(L)\cdots U_{-u\beta_{\#\Phi^+}}(L) U_{u\beta_1}(L)\cdots U_{u\beta_{n}}(L),
\end{align*}
as claimed indirectly in (a).

We explain how to find the elements $g^{(n)}_i$ via induction on $\#\Phi^+-n$, proving (b) along the way. For the inductive start, note that we have to choose $f^{(\#\Phi^+)}_\bullet\equiv 0$, so that $g^{(\#\Phi^+)}_i = g_i$.

For the inductive step, suppose now that we have constructed the elements $g^{(n)}_1,\dotsc,g^{(n)}_n$ for some $n\in\{1,\dotsc,\#\Phi^+\}$. Define $b_n$ as in (b).
If $(x_n)^{-1}b_n\in\Phi_\af^+$, we may apply Lemma \ref{lem:semiInfiniteConjugation} (a) to 
\begin{align*}
gI \in U_{-u\beta_{n+1}}(L)\cdots U_{-u\beta_{\#\Phi^+}}(L) U_{u\beta_1}(g^{(n)}_1)\cdots U_{u\beta_{n}}(g^{(n)}_{n})x_n I.
\end{align*}
By choice of $x_{n-1}$, we get $x_{n-1} = x_n$. We set $g^{(n-1)}_i = g^{(n)}_i$ for $i=1,\dotsc,n-1$.

If $(x_n)^{-1}b_n\in \Phi_\af^-$, we may apply Lemma \ref{lem:semiInfiniteConjugation} (b) to see
\begin{align*}
gI \in U_{-u\beta_{n}}(L)\cdots U_{-u\beta_{\#\Phi^+}}(L) U_{u\beta_1}({\tilde g}^{(n)}_1)\cdots U_{u\beta_{n-1}}({\tilde g}^{(n)}_{n-1})r_b x_nI.
\end{align*}
By choice of $x_{n-1}$, we get $x_{n-1} = r_b x_n$. In particular $n\in\{n_1,\dotsc,n_N\}$. The elements ${\tilde g}^{(n)}_\bullet$ are polynomials in the $g^{(n)}_\bullet$ as in Lemma \ref{lem:semiInfiniteConjugation} (b). We set $g^{(n-1)}_i := {\tilde g}^{(n)}_i$.

Observe that in any case, the value of $g^{(n-1)}_i - g^{(n)}_i$ is a polynomial with integer coefficients in $g^{(n)}_{i+1},\dotsc,g^{(n)}_{n-1}, (g^{(n)}_{n-1})^{-1}$ subject to the conditions of Lemma \ref{lem:semiInfiniteConjugation} (b). Using a simple induction on $\#\Phi^+ - i$, one can now see that $g^{(n)}_i - g_i$ has the desired shape as claimed in (a), by composition of these polynomials.

If $n\in\{n_1,\dotsc,n_N\}$, the value of $b_n$ is uniquely determined by $x_{n} x_{n+1}^{-1}$, which in turn is determined by $gI\in G(L)/I$ alone. Hence (c) follows.
\end{proof}
\begin{corollary}\label{cor:semiInfiniteTrivialization}
In the setting of Proposition~\ref{prop:semiInfiniteTypes}, we have $gI = I$ if and only if $U_{u\beta_n}(g_n)\in I$ for $n=1,\dotsc,\#\Phi^+$.
\end{corollary}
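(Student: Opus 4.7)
The \enquote{if} direction is immediate: if every factor $U_{u\beta_n}(g_n)$ lies in the subgroup $I$, their ordered product $g$ does as well, so $gI = I$.

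For the \enquote{only if} direction, the plan is to first use $gI = I$ to simplify the auxiliary data provided by Proposition~\ref{prop:semiInfiniteTypes}, and then run a descending induction that transfers information from the modified coefficients $g_n^{(n)}$ back to the original $g_n$. Since $I = 1\cdot I$ lies in a unique semi-infinite $\prescript{u'}{}{}U(L)$-orbit for every $u' \in W$, the uniqueness of the decompositions forces $x_n = 1$ for all $n \in \{0,\ldots,\#\Phi^+\}$; in particular $\{n_1,\ldots,n_N\} = \emptyset$. Applying Proposition~\ref{prop:semiInfiniteTypes}(b) at each $n \geq 1$ then yields $U_{u\beta_n}(g_n^{(n)}) \in I$: either $g_n^{(n)} = 0$, or else $x_n^{-1}b_n = b_n = (u\beta_n, \nu_L(g_n^{(n)})) \in \Phi_\af^+$, which reads $\nu_L(g_n^{(n)}) \geq \Phi^+(-u\beta_n)$.

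To upgrade this to $U_{u\beta_n}(g_n) \in I$, I would perform descending induction on $n$. The base case $n = \#\Phi^+$ is immediate from $g_n = g_n^{(n)}$ (the polynomial $f_{\#\Phi^+}^{(\#\Phi^+)}$ is empty). For the inductive step, write $g_n = g_n^{(n)} - f_n^{(n)}(g_{n+1},\ldots,g_{\#\Phi^+})$; by the ultrametric inequality it suffices to check $\nu_L(M) \geq \Phi^+(-u\beta_n)$ for each monomial $M = \varphi\, g_{n+1}^{e_{n+1}}\cdots g_{\#\Phi^+}^{e_{\#\Phi^+}}$ appearing in $f_n^{(n)}$. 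Here the vanishing of $\{n_1,\ldots,n_N\}$ is crucial: Proposition~\ref{prop:semiInfiniteTypes}(a) then guarantees $e_h \geq 0$ for all $h$, together with $\sum_h e_h\, u\beta_h = u\beta_n$. Combining the inductive hypothesis $\nu_L(g_h) \geq \Phi^+(-u\beta_h)$ with $\nu_L(\varphi) \geq 0$ (as $\varphi \in \mathbb Z$) reduces the problem to the combinatorial inequality
\begin{align*}
\sum_h e_h\, \Phi^+(-u\beta_h) \;\geq\; \Phi^+(-u\beta_n).
\end{align*}
This is trivial when $u\beta_n \in \Phi^+$ (right side is $0$); when $u\beta_n \in \Phi^-$, a non-negative integer combination of positive roots cannot equal a negative root, so some index $h$ with $e_h \geq 1$ must satisfy $u\beta_h \in \Phi^-$, contributing $e_h \geq 1 = \Phi^+(-u\beta_n)$ to the left side. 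The main obstacle I anticipate is precisely this valuation bookkeeping; it is made tractable only because $\{n_1,\ldots,n_N\} = \emptyset$ rules out negative exponents in $f_n^{(n)}$, and this in turn relies on the uniqueness of the semi-infinite decomposition applied to $gI = I$ at the very start.
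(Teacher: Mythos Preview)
Your proof is correct and follows the same overall strategy as the paper: from $gI = I$ deduce that every $x_n$ equals $1$, hence $\{n_1,\dotsc,n_N\}=\emptyset$, and then invoke Proposition~\ref{prop:semiInfiniteTypes}(b). The paper's argument is shorter because it uses (implicitly, via the inductive construction in the proof of Proposition~\ref{prop:semiInfiniteTypes}) that when $\{n_1,\dotsc,n_N\}=\emptyset$ the correction polynomials $f^{(n)}_n$ vanish identically, so $g_n^{(n)}=g_n$ and part~(b) applies to $g_n$ directly. Your descending-induction valuation estimate is a valid substitute that works purely from the stated monomial constraints on $f^{(n)}_n$ in Proposition~\ref{prop:semiInfiniteTypes}(a); it is longer but has the virtue of not appealing to details inside that proof.
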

\begin{proof}
If each $U_{u\beta_n}(g_n)$ lies in $I$, then so does their product, hence $gI = I$.

If conversely $gI = I$, then all $x_n\in\widetilde W$ must be equal to $1$. Now part (b) of Proposition~\ref{prop:semiInfiniteTypes} shows that each $g_n$ must be zero or satisfy $x_n^{-1}(u\beta_n,\nu_L(g_n))\in \Phi_\af^+$. Since $x_n=1$, the latter condition is equivalent to $U_{u\beta_n}(g_n)\in I$.
\end{proof}
\begin{definition}
Let $\prec,x_0,u,gI$ be as in Proposition~\ref{prop:semiInfiniteTypes}. We define the \emph{semi-infinite type} of $gI$ to be the set
\begin{align*}
\sitype(gI) = \{(n_h, \nu_L(g^{(n_h)}_{n_h}))\mid h=1,\dotsc,N\}\subset \mathbb Z\times\mathbb Z.
\end{align*}
Any subset of $\mathbb Z\times\mathbb Z$ of the above form is called an \emph{admissible type} for $(x_0,u,\prec)$.
\end{definition}

\begin{lemma}\label{lem:typeVarieties}
Let $\Phi^+ = \{\beta_1\prec\cdots\prec\beta_{\#\Phi^+}\}$ be a reflection order and $u\in W$. Choose an arbitrary subset $\{n_1<\cdots <n_N\}\subseteq \{1,\dotsc,\#\Phi^+\}$ and values $\nu_h\in\mathbb Z$ for $h=1,\dotsc,N$. Define $b_h := (u\beta_{n_h}, \nu_h)\in\Phi_\af$.
\begin{enumerate}[(a)]
\item
The set $\{(n_1,\nu_1),\dotsc,(n_N, \nu_N)\}$ defines an admissible type for $(r_{b_1}\cdots r_{b_N},u,\prec)$ if and only if
\begin{align*}
r_{b_N}\cdots r_{b_{h+1}}(b_h) \in\Phi_{\af}^-
\end{align*}
for $h=1,\dotsc,N$.
\item There is a locally closed and reduced $k$-sub-ind-scheme 
\begin{align*}
\mathcal T = \mathcal T_{u,\prec,(n_1,\nu_1),\dotsc,(n_N,\nu_N)}
\end{align*} of the affine flag variety whose $k$-valued points are given by precisely those elements
\begin{align*}
gI \in U_{u\beta_1}(L)\cdots U_{u\beta_{\#\Phi^+}}(L)I/I\subset G(L)/I
\end{align*}
which satisfy $\sitype(gI) = \{(n_1,\nu_1),\dotsc,(n_N,\nu_N)\}$.
\end{enumerate}
\end{lemma}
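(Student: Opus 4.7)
The plan is to derive (a) from Proposition~\ref{prop:semiInfiniteTypes} by tracking the trajectory of the $x_n$, and then to construct the scheme in (b) as the intersection of explicit locally closed conditions in the natural parametrization of the semi-infinite orbit.

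For the \enquote{only if} direction of (a), take any $gI$ realizing the prescribed semi-infinite type and apply Proposition~\ref{prop:semiInfiniteTypes}(b) iteratively, starting from $n = \#\Phi^+$ (where $x_n = 1$) and descending to $n = 0$. Since the $x_n$ are locally constant in $n$, changing only at transition indices via $x_{n_h - 1} = r_{b_h} x_{n_h}$, induction gives $x_{n_h} = r_{b_{h+1}} \cdots r_{b_N}$, and in particular $x_0 = r_{b_1} \cdots r_{b_N}$. The transition condition $x_{n_h}^{-1} b_h \in \Phi_\af^-$ then unpacks to the required $r_{b_N} \cdots r_{b_{h+1}}(b_h) \in \Phi_\af^-$. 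For the converse, I would construct an explicit element: set $\tilde g_{n_h} = c_h t^{\nu_h}$ for generic $c_h \in k^\times$ and $\tilde g_n = 0$ otherwise, and form $g = U_{u\beta_1}(\tilde g_1) \cdots U_{u\beta_{\#\Phi^+}}(\tilde g_{\#\Phi^+})$. By Proposition~\ref{prop:semiInfiniteTypes}(a), the polynomial corrections $f^{(n)}_\bullet$ vanish on the many indices where $\tilde g = 0$; the surviving monomials are supported on $\tilde g_{n_{h+1}}, \ldots, \tilde g_{n_N}$, and for generic $c_h$ they cannot cancel the leading term of $\tilde g_{n_h}$, so $\nu_L(g^{(n_h)}_{n_h}) = \nu_h$. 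The admissibility hypothesis then matches the trajectory produced by Proposition~\ref{prop:semiInfiniteTypes}(b) to the prescribed type.

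For (b), I would use the parametrization $(g_1, \ldots, g_{\#\Phi^+}) \mapsto U_{u\beta_1}(g_1) \cdots U_{u\beta_{\#\Phi^+}}(g_{\#\Phi^+}) I$, which exhibits $\prescript u{}{}U(L) I/I$ as an ind-scheme quotient of a product of affine root groups. By Proposition~\ref{prop:semiInfiniteTypes}(a), each $g^{(n)}_n$ is a rational polynomial in the $g_i$, hence an algebraic function on the source. At a transition index, the condition $\nu_L(g^{(n_h)}_{n_h}) = \nu_h$ is locally closed (the intersection of a closed valuation lower bound with the open complement of a strictly higher bound). At a non-transition index, the condition $\nu_L(g^{(n)}_n) \geq c_n$ (with $c_n$ determined by $x_n$ and $u\beta_n$ through $x_n^{-1}(u\beta_n,\cdot) \in \Phi_\af^+$, absorbing the alternative $g^{(n)}_n = 0$) is closed. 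Their intersection, equipped with the reduced induced structure, yields a locally closed reduced subscheme of the source parametrization.

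The principal obstacle is that this parametrization is not injective, so one must check that the subscheme descends to a locally closed subscheme of the affine flag variety. This is where Proposition~\ref{prop:semiInfiniteTypes}(c) is essential: the invariants $\nu_L(g^{(n_h)}_{n_h})$ and the transition index set depend only on $gI$, not on the chosen tuple $(g_n)$. Hence the defining conditions are invariant under the stabilizer $\prescript u{}{}U(L) \cap I$ of $I$ in $\prescript u{}{}U(L)$ described via Corollary~\ref{cor:semiInfiniteTrivialization}. The quotient thus inherits the locally closed reduced structure, producing the desired sub-ind-scheme $T$ whose $k$-points are precisely the elements with semi-infinite type $\{(n_h, \nu_h)\}$.
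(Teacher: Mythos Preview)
Your necessity argument for (a) and your treatment of (b) are correct and follow the paper's approach, only with more detail than the paper provides; in particular, your use of Proposition~\ref{prop:semiInfiniteTypes}(c) and Corollary~\ref{cor:semiInfiniteTrivialization} to justify descent to the quotient is a welcome elaboration of the paper's one-sentence proof of (b).

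There is, however, a gap in your sufficiency argument for (a). With $\tilde g_{n_h} = c_h t^{\nu_h}$ and $\tilde g_n = 0$ otherwise, you verify only that $\nu_L(g^{(n_h)}_{n_h}) = \nu_h$ at the transition indices for generic $c_h$; you do not check the non-transition indices. For $n \notin \{n_1,\dotsc,n_N\}$ the corrected coordinate $g^{(n)}_n = f^{(n)}_n(\tilde g_{n+1},\dotsc)$ is a Laurent polynomial in the nonzero $\tilde g_{n_{h'}}$ (monomials supported purely on transition indices survive), and nothing forces this to vanish or to have valuation large enough that $x_n^{-1}(u\beta_n,\nu_L(g^{(n)}_n)) \in \Phi_\af^+$; if it does not, $n$ becomes an unintended transition and the type of $gI$ is not the prescribed one. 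There is also a circularity lurking: the polynomials $f^{(n)}_\bullet$ of Proposition~\ref{prop:semiInfiniteTypes}(a) depend on the \emph{actual} type of the element, which is what you are trying to establish. The paper's ``iteratively choose values'' sidesteps both issues: work downward in $n$, and at each step choose $g_n$ so that $g^{(n)}_n$ equals $t^{\nu_h}$ if $n = n_h$ and equals $0$ otherwise. This is always possible since $g^{(n)}_n = g_n + (\text{something already fixed})$, the relevant $f^{(n)}_\bullet$ are determined by the inductively verified type on indices $>n$, and the hypothesis $r_{b_N}\cdots r_{b_{h+1}}(b_h)\in\Phi_\af^-$ then ensures via Proposition~\ref{prop:semiInfiniteTypes}(b) that each step produces exactly the intended transition or non-transition.
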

\begin{proof}
\begin{enumerate}[(a)]
\item
The given condition for the $b_h$ is certainly necessary by Proposition~\ref{prop:semiInfiniteTypes}. Conversely, if this condition is satisfied, one may iteratively choose values for $g_i$ in Proposition~\ref{prop:semiInfiniteTypes} to construct an element $gI\in G(L)/I$ of the desired type.
\item The definition of $\sitype(gI)$ in terms of $L$-valuations allows us to write $gI\in T(k)$ in terms of vanishing or non-vanishing of certain polynomials over $k$. Hence we get a well-defined reduced subscheme with these geometric points.\qedhere
\end{enumerate}
\end{proof}
We call $\mathcal T_{u,\prec,(n_1,\nu_1),\dotsc,(n_N,\nu_N)}$ a \emph{type variety}. These type varieties are analogues of the Gelfand-Goresky-MacPherson-Serganova strata in the affine Grassmannian, cf.\ \cite{Kamnitzer2010}. We need the following numerical datum to describe their dimensions. This can be seen as finite a replacement for the infinite dimension of $\prescript u{}{}U(L)xI$.
\begin{definition}
Let $x=w t^\mu\in \widetilde W, u\in W$ and $\alpha\in \Phi$.
\begin{enumerate}[(a)]
\item We define the \emph{length functional}, following \cite[Definition~2.5]{Schremmer2022_newton}, as
\begin{align*}
\ell(x,\alpha) = \langle \mu,\alpha\rangle + \Phi^+(\alpha)-\Phi^+(w\alpha).
\end{align*}
\item We define
\begin{align*}
\ell_u(x) := \sum_{\alpha\in \Phi^+} \ell(x^{-1},u\alpha)=\langle -u^{-1} w\mu,2\rho\rangle - \ell(u) + \ell(w^{-1} u).
\end{align*}
\end{enumerate}
\end{definition}

The claimed identity can easily be seen along the lines of \cite[Corollary~2.11]{Schremmer2022_newton}. This result moreover proves that $\ell_u(x)\leq \ell(x)$, with equality holding if and only if $u\in\LP(x^{-1})$. From \cite[Lemma~2.9]{Schremmer2022_newton}, we see
\begin{align*}
\ell_u(x) = \dim ((I\cap \prescript u{}{}U(L))xI/I) - \dim((I\cap \prescript{uw_0}{}{}U(L))xI/I).
\end{align*}
\begin{proposition}\label{prop:Tgeom}
Let $\tau = \{(n_1,\nu_1),\dotsc,(n_N,\nu_N)\}$ be an admissible type for $(x,u,\prec)$. Then $\mathcal T = \mathcal  T_{u,\prec,(n_1,\nu_1),\dotsc,(n_N,\nu_N)}$ is a finite-dimensional irreducible smooth affine scheme over $k$. We have
\begin{align*}
\dim \mathcal T = \frac 12\left(N-\ell_u(x)\right).
\end{align*}
\end{proposition}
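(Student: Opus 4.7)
The plan is to realize $T$ as an iterated trivial fibration built by reverse induction on the root index $n$, using the polynomial description from Proposition~\ref{prop:semiInfiniteTypes}, and then match the accumulated dimension with the claimed formula. For each $n \in \{0,1,\ldots,\#\Phi^+\}$, introduce the auxiliary locally closed subvariety $T^{(n)} \subset G(L)/I$ consisting of those cosets admitting a representative $U_{u\beta_{n+1}}(g_{n+1}) \cdots U_{u\beta_{\#\Phi^+}}(g_{\#\Phi^+})I$ whose semi-infinite type matches $\tau$ at indices strictly greater than $n$. By Proposition~\ref{prop:semiInfiniteTypes}(a), these conditions involve only the variables $g_{n+1},\ldots,g_{\#\Phi^+}$, so $T^{(n)}$ is well-defined via Lemma~\ref{lem:typeVarieties}(b); note that $T^{(\#\Phi^+)} = \{I\}$ and $T^{(0)} = T$.

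For the inductive step $n \to n-1$, consider the forgetful map $\pi_n \colon T^{(n-1)} \to T^{(n)}$ which drops the variable $g_n$. Using Proposition~\ref{prop:semiInfiniteTypes}(b) together with the polynomial description $g^{(n)}_n = g_n + f^{(n)}_n(g_{n+1},\ldots,g_{\#\Phi^+})$, which is a fixed affine shift of $g_n$ once the higher variables are chosen, the fiber of $\pi_n$ is computed as follows. When $n \notin \{n_1,\ldots,n_N\}$, the fiber is an affine space $\mathbb{A}^{d_n}$ (with $d_n \geq 0$) parametrizing values of $g_n$ modulo the Iwahori subgroup such that $g^{(n)}_n$ satisfies $x_n^{-1}(u\beta_n,\nu_L(g^{(n)}_n)) \in \Phi_{\af}^+$. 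When $n = n_h$, admissibility forces the sharp valuation requirement $\nu_L(g^{(n)}_n) = \nu_h$ to cut out an open subset $\mathbb{G}_m \times \mathbb{A}^{d_n - 1}$ of the corresponding affine space, with the $\mathbb{G}_m$-factor recording the leading coefficient at valuation $\nu_h$. In both cases the fiber is smooth, irreducible, and affine; since $T^{(n)}$ is inductively affine, the vanishing of $H^1_{\mathrm{Zar}}(T^{(n)}, \mathcal{O}_{T^{(n)}})$ trivializes the additive-group torsor structure so that $T^{(n-1)} \cong T^{(n)} \times F_n$ globally. Induction yields $T \cong \mathbb{G}_m^N \times \mathbb{A}^{\sum_n d_n - N}$, a smooth, irreducible, affine variety of dimension $\sum_{n=1}^{\#\Phi^+} d_n$.

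The final and most delicate step is to prove $\sum_n d_n = \tfrac{1}{2}(N - \ell_u(x))$. Writing $\ell_u(x) = \sum_{\alpha \in \Phi^+} \ell(x^{-1},u\alpha)$ and applying the recursion $x_{n-1} = r_{b_n} x_n$ at reflection indices (with $x_{n-1}=x_n$ otherwise), each $d_n$ is expressed explicitly in terms of $\ell(x_n^{-1},u\beta_n)$ versus $\ell(x_{n-1}^{-1},u\beta_n)$. The claimed identity then reduces, after telescoping and using $x_{\#\Phi^+}=1$, to matching case-by-case contributions against $\tfrac{1}{2}\ell(x^{-1},u\beta_n)$, with the $+N$ term in the numerator absorbed by the $\mathbb{G}_m$ factors arising at reflection indices. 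The main obstacle is precisely this combinatorial bookkeeping: one must carefully track how the affine root $(u\beta_n, \cdot)$ changes positivity status under each reflection $r_{b_n}$ and verify that the admissibility condition of Lemma~\ref{lem:typeVarieties}(a) is precisely what ensures all $d_n$ remain nonnegative while producing the correct factor of $\tfrac{1}{2}$ in the dimension formula.
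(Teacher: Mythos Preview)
Your strategy—reverse induction on the root index, peeling off one coordinate at a time—matches the paper's, but the execution has a real gap. You assert that $\pi_n\colon T^{(n-1)}\to T^{(n)}$ is a fibration with fibre $\mathbb{A}^{d_n}$ or $\mathbb{G}_m\times\mathbb{A}^{d_n-1}$ for some $d_n\geq 0$. This fails: when $n\notin\{n_1,\dotsc,n_N\}$, the constraint $x_n^{-1}(u\beta_n,\nu_L(g^{(n)}_n))\in\Phi_{\af}^+$ amounts to a valuation bound $\nu_L(g^{(n)}_n)\geq m_n$, and nothing forces $m_n\leq\Phi^+(-u\beta_n)$. In that regime the constraint on $g_n$ is \emph{stricter} than the coset ambiguity, so there is no free affine-space worth of choices for $g_n$ at all; indeed $\dim T^{(n-1)}$ can be strictly smaller than $\dim T^{(n)}$. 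The paper's proof meets exactly this obstacle and does \emph{not} build a fibration: it instead constructs an explicit isomorphism $T\times (U_{u\beta_1}(L_{\leq M})\cap I)\cong T'\times(S\cap L_{\leq M})$ between products with auxiliary pieces (via the truncation maps $f_1,\varphi_1$ and their partial inverses $f_2,\varphi_2$), which accommodates either sign of $\dim T-\dim T'$.

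Two further points. First, the ``fixed affine shift'' $f^{(n)}_n(g_{n+1},\dotsc,g_{\#\Phi^+})$ is a function of representatives, not of the coset $hI\in T^{(n)}$, so your fibre description is not well-posed as stated; the paper extracts only the well-defined part via truncation. Second, your $H^1_{\mathrm{Zar}}(T^{(n)},\mathcal{O})=0$ argument trivializes only $\mathbb{G}_a$-torsors, not bundles with fibre $\mathbb{G}_m\times\mathbb{A}^{d_n-1}$. In fact the paper explicitly remarks (after Theorem~\ref{thm:semiInfiniteOrbitsViaTypes}) that the product decomposition $T\cong\mathbb{G}_m^N\times\mathbb{A}^{?}$ you assert is an open expectation, likely requiring progress on Zariski's cancellation problem—so you are claiming more than the proposition asks and more than is presently known.
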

\begin{proof}
First consider the case $N=0$. Then evidently $\mathcal T$ is just a point over $k$, given by $\mathcal T(k) = \{I\}\subset G(L)/I$.

Suppose now $N\geq 1$. We prove the claim via induction on $n_N$ (with the inductive start being the case $n_N$ undefined, i.e.\ $N=0$ above). For $m\in \mathbb Z$, we define the \emph{truncation map} $\tr_{\leq m} : L\rightarrow L$ as
\begin{align*}
\tr_{\leq m}\Bigl(\sum_{i\in\mathbb Z} a_i t^i\Bigr) = \sum_{i\leq m}a_i t^i.
\end{align*}
Denote its image by $L_{\leq m}$, which is easily equipped with the structure of an $k$-ind-scheme. We define the map of $k$-ind-schemes
\begin{align*}
f_1 :\mathcal  T\rightarrow L_{\leq -\Phi^+(u\beta_1)},\qquad U_{u\beta_1}(g_1)\cdots U_{u\beta_{\#\Phi^+}}(g_{\#\Phi^+})I\in \mathcal T(k)~~\mapsto~~ \tr_{\leq -\Phi^+(u\beta_1)}(g_1).
\end{align*}
In order to check that this is well-defined, suppose that
\begin{align*}
U_{u\beta_1}(g_1)\cdots U_{u\beta_{\#\Phi^+}}(g_{\#\Phi^+})I = 
U_{u\beta_1}(\tilde g_1)\cdots U_{u\beta_{\#\Phi^+}}(\tilde g_{\#\Phi^+})I\in \mathcal T(k)
\end{align*}
for some $g_\bullet,\tilde g_\bullet\in L$. Then
\begin{align*}
\Bigl[U_{u\beta_1}(g_1)\cdots U_{u\beta_{\#\Phi^+}}(g_{\#\Phi^+})\Bigr]^{-1} \Bigl[U_{u\beta_1}(\tilde g_1)\cdots U_{u\beta_{\#\Phi^+}}(\tilde g_{\#\Phi^+})\Bigr] \in \prescript u{}{}U(L)\cap I.
\end{align*}
By Corollary~\ref{cor:semiInfiniteTrivialization} and the reflection order property, we conclude $U_{u\beta_1}(\tilde g_1-g_1)\in I$. Hence $\tr_{\leq -\Phi^+(u\beta_1)}(g_1) = \tr_{\leq -\Phi^+(u\beta_1)}(\tilde g_1)$. This shows well-definedness of the map $f_1$.

Define the reflection order $\prec' = \prec^{\beta_1}$ as in \cite[Proposition~5.2.3]{Bjorner2005}, so
\begin{align*}
s_{\beta_1}(\beta_2)\prec' s_{\beta_1}(\beta_3)\prec'\cdots \prec' s_{\beta_1}(\beta_{\#\Phi^+})\prec' \beta_1.
\end{align*}
Define moreover the type
\begin{align*}
\tau' = \{(n_i-1,\nu_i)\mid i \in\{1,\dotsc,N\}\text{ and }n_i>1\}.
\end{align*}
Write $\mathcal T' =\mathcal  T_{us_{\beta_1},\prec',\tau'}$. Then the inductive assumption applies to $\mathcal T'$. For all elements $U_{u\beta_1}(g_1)\cdots U_{u\beta_{\#\Phi^+}}(g_{\#\Phi^+})I\in \mathcal T(k)$, one easily checks $U_{u\beta_2}(g_2)\cdots U_{u\beta_{\#\Phi^+}}(g_{\#\Phi^+})I\in \mathcal T'(k)$.

Observe that $U_{u\beta_1}(L)$ normalizes $U_{u\beta_2}(L)\cdots U_{u\beta_{\#\Phi^+}}(L)$. By definition of the variety $\mathcal T'$, we see that $(U_{u\beta_1}(L)\cap I) \mathcal T'(k) = \mathcal T'(k)$.
Hence we obtain a well-defined map of $k$-ind-schemes $f_2 : \mathcal T\times (U_{u\beta_1}(L)\cap I)\rightarrow \mathcal T'$ sending $gI\in \mathcal T(k)$ and $U_{u\beta_1}(h)\in I$ to
\begin{align*}
U_{u\beta_1}(h-f_1(gI))gI\in \mathcal T'(k).
\end{align*}
Define
\begin{align*}
x' = w't^{\mu'} = \begin{cases} x,&n_1>1,\\
r_{b_1} x,&n_1=1,\end{cases}
\end{align*}
such that $\tau'$ is admissible for $(x',us_{\beta_1},\prec')$. Thus $\mathcal T'(k)\subset \prescript{us_{\beta_1} w_0}{}{}U(L)x'I$, allowing us to write elements $g'I\in \mathcal T'(k)$ in the form
\begin{align*}
g'I = U_{-u\beta_2}(g'_1)\cdots U_{-u\beta_{\#\Phi^+}}(g'_{\#\Phi^+-1}) U_{u\beta_1}(g'_{\#\Phi^+})x'I.
\end{align*}
Here, we have
\begin{align*}
U_{u\beta_1}(g'_{\#\Phi^+})\in \prescript {x'}{}{}I\iff& (x')^{-1}(u\beta_1,\nu_L(g'_{\#\Phi^+}))\in \Phi_\af^+
\\\iff&\nu_L(g'_{\#\Phi^+}) \geq \langle -w'\mu',u\beta_1 \rangle + \Phi^+(-(w')^{-1}u\beta_1)=:m+1\in\mathbb Z.
\end{align*}
Thus we obtain a well-defined morphism of $k$-ind-schemes $\varphi_1:\mathcal T'\rightarrow L_{\leq m}$ sending $g'I\in \mathcal T'(k)$ as represented above to $\tr_{\leq m}(g'_{\#\Phi^+})$ (check well-definedness using Corollary~\ref{cor:semiInfiniteTrivialization} as above).

Let $S\subset L$ be the $k$-sub-ind-scheme defined by the following condition for $z\in L$:
\begin{align*}
z\in S(k) :\iff \begin{cases}
\nu_L(z)\geq m+1,&\text{ if }n_1>1,\\
\nu_L(z) = \nu_1,&\text{ if }n_1=1.
\end{cases}
\end{align*}
We would like to define the map of $k$-ind-schemes $\varphi_2: \mathcal T'\times S\rightarrow \mathcal T$ sending $g'I\in \mathcal T'(k)$ and $z\in S(k)$ to
\begin{align*}
\varphi_2(g'I, z) = U_{u\beta_1}(z-\varphi_1(g'I)) g'I.
\end{align*}
Let us check that $\varphi_2$ is well-defined, i.e.\ takes values in $\mathcal T$ as claimed. For $i=2,\dotsc,\#\Phi^+$, we have
\begin{align*}
\prescript{us_{\beta_i}\cdots s_{\beta_{\#\Phi^+}}}{}{}U(L) \varphi_2(g'I,z)& = 
\prescript{us_{\beta_i}\cdots s_{\beta_{\#\Phi^+}}}{}{}U(L) g'I 
\\&=\prescript{(us_{\beta_1})s_{s_{\beta_1}(\beta_i)}\cdots s_{s_{\beta_1}(\beta_{\#\Phi^+})}s_{\beta_1}}{}{}U(L) g'I.
\end{align*}
Moreover, computing
\begin{align*}
\varphi_2(g'I,z)\in U_{-u\beta_2}(L)\cdots U_{-u\beta_{\#\Phi^+}}(L)U_{u\beta_1}(z)x'I,
\end{align*}
we can apply Lemma~\ref{lem:semiInfiniteConjugation} to get $\varphi_2(g'I,z)\in \prescript{uw_0}{}{}U(L)xI$ by the condition $z\in S(k)$. Comparing the definitions of $\tau$ and $\tau'$, we get $\varphi_2(g'I,z)\in \mathcal T(k)$.

For a sufficiently large integer $M\gg 0$, one checks that we have an isomorphism of $k$-ind-schemes
\begin{align*}
\mathcal T \times (U_{u\beta_1}(L_{\leq M})\cap I)\rightarrow \mathcal T' \times (S\cap L_{\leq M}),
\end{align*}
sending $gI\in \mathcal T(k)$ and $U_{u\beta_1}(h)\in U_{u\beta_1}(L_{\leq M})\cap I$ to
$f_2(gI,h)\in \mathcal T'(k)$ and
\begin{align*}
-h+\varphi_1(f_2(gI,h)) +f_1(gI)\in S(k)\cap L_{\leq M}.
\end{align*}
Its inverse is the map sending $g'I\in \mathcal T'(k)$ and $z\in S(k)\cap L_{\leq M}$ to $\varphi_2(g'I,z)\in \mathcal T(k)$ and
\begin{align*}
U_{u\beta_1}(-z + f_1(\varphi_2(g'I,z)) + \varphi_1(g'I))\in U_{u\beta_1}(L_{\leq M})\cap I.
\end{align*}
By the inductive assumption, $\mathcal T'$ is a finite-dimensional irreducible smooth affine scheme over $k$. The same conditions hold true for $S\cap L_{\leq M}$ (which is either an affine space over $k$ or the product of a pointed affine line with an affine space). Hence the same conditions all hold true for $\mathcal T\times (U_{u\beta_1}(L_{\leq M})\cap I)$. It follows that they must also hold true for $\mathcal T$ itself. Moreover, we have
\begin{align*}
\dim\mathcal  T' - \dim \mathcal T &= \dim (U_{u\beta_1}(L_{\leq M})\cap I)-\dim (S\cap L_{\leq M}) \\&= \begin{cases}m+1 - \Phi^+(-u\beta_1),&\text{ if } n_1>1,\\
\nu_1 - \Phi^+(-u\beta_1),&\text{ if } n_1=1.\end{cases}
\end{align*}
By induction, we know $2\dim \mathcal T' =N'-\ell_{us_{\beta_1}}(x')$, with $N' = N$ if $n_1>1$ and $N' = N-1$ if $n_1=1$. We show $2\dim\mathcal  T = N-\ell_u(x)$, using a case distinction depending on whether $n_1>1$ or not.

First consider the case $n_1=1$. From \cite[Lemma~2.12]{Schremmer2022_newton} or direct calculation, we get
\begin{align*}
\ell_{us_{\beta_1}}(x') = \ell_{us_{\beta_1}}(r_{b_1}x) = \ell_{us_{\beta_1}}(r_{b_1})+\ell_u(x).
\end{align*}
We calculate
\begin{align*}
\ell_{us_{\beta_1}}(r_{b_1}) &= \ell_{us_{\beta_1}}(s_{u\beta_1}t^{\nu_1u\beta_1^\vee}) = \langle -\nu_1\beta_1^\vee,2\rho\rangle-\ell(us_{\beta_1})+\ell(u).
\intertext{
Since $\beta_1$ is simple, the above expression simplifies to}
\cdots&=-2\nu_1 -1+2\Phi^+(-u\beta_1).
\end{align*}
We conclude
\begin{align*}
\dim\mathcal  T &= \dim \mathcal T' - \nu_1+\Phi^+(-u\beta_1) = \frac 12\left(N'-\ell_{us_{\beta_1}}(x')\right) - \nu_1 + \Phi^+(-u\beta_1)
\\&=\frac 12\left(N-\ell_u(x) + 2\nu_1 -2\Phi^+(-u\beta_1)\right) - \nu_1 + \Phi^+(-u\beta_1)
=\frac 12\left(N-\ell_u(x)\right).
\end{align*}
Let us now consider the case $n_1>1$. Then we calculate
\begin{align*}
\ell_{us_{\beta_1}}(x) &= \sum_{\alpha\in \Phi^+}\ell(x^{-1},us_{\beta_1}\alpha)
=\sum_{\alpha\in s_{\beta_1}\Phi^+}\ell(x^{-1},u\alpha) \\&= \ell_u(x) - \ell(x^{-1},u\beta_1) + \ell(x^{-1},-u\beta_1) = \ell_u(x)-2\ell(x^{-1},u\beta_1).
\end{align*}
We compute
\begin{align*}
\ell(x^{-1},u\beta_1) = \ell(w^{-1}t^{-w\mu},u\beta_1) = \langle -w\mu,u\beta_1\rangle + \Phi^+(u\beta_1) - \Phi^+(w^{-1}u\beta_1) = m + \Phi^+(u\beta_1).
\end{align*}
The claimed dimension formula for $\mathcal T$ follows just as above. This finishes the induction and the proof.
\begin{align*}
\dim\mathcal  T &= \dim\mathcal  T' - m - \Phi^+(u\beta_1) = \frac 12(N-\ell_{us_{\beta_1}}(x)) - m - \Phi^+(u\beta_1)
\\&=\frac 12(N - \ell_u(x) + 2\ell(x^{-1},u\beta_1)) - m - \Phi^+(u\beta_1)
=\frac 12(N - \ell_u(x)).\qedhere
\end{align*}
\end{proof}
We reformulate this proposition to describe arbitrary intersections of semi-infinite orbits.
\begin{theorem}\label{thm:semiInfiniteOrbitsViaTypes}
Let $u,v\in W$ and $x=w_xt^{\mu_x},y\in\widetilde W$. Pick a reflection order $\Phi^+ = \{\beta_1\prec\cdots\prec\beta_{\#\Phi^+}\}$ and an index $n\in\{0,\dotsc,\#\Phi^+\}$ such that
\begin{align*}
u^{-1}v = s_{\beta_{n+1}}\cdots s_{\beta_{\#\Phi^+}}.
\end{align*}
Then we get a decomposition into locally closed subsets
\begin{align*}
\left((\prescript u{}{}U(L) \cap\prescript v{}{}U(L))xI\right)\cap (\prescript {uw_0}{}U(L)yI)=\bigsqcup_\tau x\mathcal T_\tau\subset G(L)/I,
\end{align*}
where $\tau$ runs through all  $\tau = \{(n_1,\nu_1),\dotsc,(n_N,\nu_N)\}$ which are admissible types for $(x^{-1}y,w_x^{-1}u,\prec)$ and satisfy the additional constraint $N=0$ or $n_N\leq n$. Each piece $x \mathcal T_\tau = x \mathcal T_{w_x^{-1}u,\prec,\tau}\subset G(L)/I$ is a locally closed subset of the affine flag variety, and an irreducible smooth affine $k$-scheme of dimension
\begin{align*}
\dim x \mathcal T_\tau = \frac 12\left(\ell_u(x)-\ell_u(y)+\#\tau\right).
\end{align*}
\end{theorem}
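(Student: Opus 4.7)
The plan is to reduce the theorem to Proposition~\ref{prop:Tgeom} by left-multiplying the whole intersection by $x^{-1}$. Writing $x = w_x t^{\mu_x}$, the identity $x^{-1}\prescript{w}{}{}U(L)x = \prescript{w_x^{-1}w}{}{}U(L)$ holds for every $w\in W$, because $T(L)$-conjugation preserves each individual root subgroup $U_\alpha(L)$. Setting $u' = w_x^{-1}u$, $v' = w_x^{-1}v$, $z = x^{-1}y$, the intersection becomes $x\cdot\bigl[(\prescript{u'}{}{}U(L)\cap\prescript{v'}{}{}U(L))I\cap\prescript{u'w_0}{}{}U(L)zI\bigr]$. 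Since $u'^{-1}v' = u^{-1}v = s_{\beta_{n+1}}\cdots s_{\beta_{\#\Phi^+}}$, Lemma~\ref{lem:reflectionOrderProperties}(b) gives $\prescript{u'}{}{}U(L)\cap\prescript{v'}{}{}U(L) = U_{u'\beta_1}(L)\cdots U_{u'\beta_n}(L)$.

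For the forward inclusion, I represent any $gI$ in the bracketed set as $g = U_{u'\beta_1}(g_1)\cdots U_{u'\beta_n}(g_n)$, pad with trailing zeros, and apply Proposition~\ref{prop:semiInfiniteTypes} with data $(u',\prec)$. A top-down induction shows $g^{(n')}_{n'} = 0$ for every $n' > n$: the polynomial $f^{(n')}_{n'}$ has no negative exponents (by the inductively-established fact that no element of $\{n_1,\dotsc,n_N\}$ exceeds $n'$) and all its monomials have positive total degree (since $\beta_{n'}$ is a non-trivial positive combination of other positive roots), so evaluation at zero vanishes. Hence $x_n = x_{n+1} = \cdots = x_{\#\Phi^+} = 1$ and $\sitype(gI)$ satisfies $n_N\leq n$. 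The condition $gI\in\prescript{u'w_0}{}{}U(L)zI$ fixes $x_0 = z$, placing $gI$ in $T_{u',\prec,\sitype(gI)}$ for an admissible type of the prescribed shape.

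The reverse inclusion is the heart of the argument. Applying Proposition~\ref{prop:semiInfiniteTypes}(a) at the index $n$ together with $x_n = 1$ rewrites any $gI\in T_\tau$ as a product of factors $U_{-u'\beta_i}(\cdot)$ for $i > n$ and $U_{u'\beta_j}(\cdot)$ for $j\leq n$, all of which lie in $\prescript{v'}{}{}U(L)$, so $gI\in\prescript{u'}{}{}U(L)I\cap\prescript{v'}{}{}U(L)I$. It thus suffices to prove the set-theoretic identity
\[
\prescript{u'}{}{}U(L)I\cap\prescript{v'}{}{}U(L)I = (\prescript{u'}{}{}U(L)\cap\prescript{v'}{}{}U(L))I,
\]
which is the main technical step. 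Given such $gI$, factor $g = g_+g_-$ with $g_+\in\prescript{u'}{}{}U(L)\cap\prescript{v'}{}{}U(L)$ and $g_-\in\prescript{u'}{}{}U(L)\cap\prescript{v'w_0}{}{}U(L)$; the assumption on $gI$ forces $g_- = aj$ with $a\in\prescript{v'}{}{}U(L)$, $j\in I$. Applying uniqueness of factorization in the open cell $\prescript{v'}{}{}U(L)\cdot T(L)\cdot\prescript{v'w_0}{}{}U(L)$ to the equation $a^{-1}\cdot 1\cdot g_- = j$, where $j$ is factored via the Iwahori decomposition of $I$, yields $g_-\in I$ and hence $gI = g_+I$ as required.

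Finally, each $xT_\tau$ inherits from $T_\tau$ (via Proposition~\ref{prop:Tgeom} and Lemma~\ref{lem:typeVarieties}) the property of being a finite-dimensional irreducible smooth affine $k$-scheme, locally closed in the affine flag variety, since left multiplication by $x$ is an isomorphism. The dimension identity $\dim xT_\tau = \frac12(\ell_u(x)-\ell_u(y)+\#\tau)$ follows from $\dim T_\tau = \frac12(\#\tau-\ell_{u'}(z))$ together with $\ell_{u'}(z) = \ell_u(y)-\ell_u(x)$; the latter is a direct computation using the closed formula $\ell_u(x) = \langle -u^{-1}w_x\mu_x,2\rho\rangle - \ell(u) + \ell(w_x^{-1}u)$ combined with $w_z = w_x^{-1}w_y$ and $w_z\mu_z = w_x^{-1}w_y\mu_y - \mu_x$.
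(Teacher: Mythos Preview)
Your argument is correct and follows the same overall strategy as the paper: translate by $x^{-1}$, identify $\prescript{u'}{}{}U(L)\cap\prescript{v'}{}{}U(L)$ with $U_{u'\beta_1}(L)\cdots U_{u'\beta_n}(L)$ via Lemma~\ref{lem:reflectionOrderProperties}, invoke Proposition~\ref{prop:semiInfiniteTypes}, and read off the dimension from Proposition~\ref{prop:Tgeom}. You supply considerably more detail than the paper, which simply asserts that Proposition~\ref{prop:semiInfiniteTypes} decomposes $U_{u'\beta_1}(L)\cdots U_{u'\beta_n}(L)I$ into exactly the types with $n_N\le n$.

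One remark on your reverse inclusion: the big-cell detour through $\prescript{u'}{}{}U(L)I\cap\prescript{v'}{}{}U(L)I = (\prescript{u'}{}{}U(L)\cap\prescript{v'}{}{}U(L))I$ is valid but unnecessary. A shorter route is already implicit in Proposition~\ref{prop:semiInfiniteTypes}: for any representative $gI = U_{u'\beta_1}(g_1)\cdots U_{u'\beta_{\#\Phi^+}}(g_{\#\Phi^+})I$ of a point in $T_\tau$ with $n_N\le n$, the polynomials $f^{(n')}_\bullet$ vanish for $n'\ge n$ (they depend only on $\{n_1,\dots,n_N\}\cap\{n'+1,\dots,\#\Phi^+\}=\emptyset$), so $g^{(n')}_{n'}=g_{n'}$, and part~(b) with $x_{n'}=1$ forces $U_{u'\beta_{n'}}(g_{n'})\in I$ for every $n'>n$. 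Hence $gI = U_{u'\beta_1}(g_1)\cdots U_{u'\beta_n}(g_n)I$ directly. For the dimension, the paper cites \cite[Lemma~2.12]{Schremmer2022_newton} to get $\ell_{w_x^{-1}u}(x^{-1}y)=\ell_{w_x^{-1}u}(x^{-1})+\ell_u(y)=-\ell_u(x)+\ell_u(y)$; your explicit computation from the closed formula achieves the same thing.
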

\begin{proof}
For all $\beta\in \Phi^+$, we have
\begin{align*}
U_{ u\beta}(L)\subseteq \prescript v{}{}U(L)&\iff v^{-1}u\beta\in \Phi^+\underset{\text{L\ref{lem:reflectionOrderProperties}}}\iff\beta\in\{\beta_1,\dotsc,\beta_n\}.
\end{align*}
Hence
\begin{align*}
&\prescript u{}{}U(L)\cap \prescript {v}{}{}U(L) = U_{u\beta_1}(L)\cdots U_{u\beta_{n}}(L)
\\&x^{-1}(\prescript u{}{}U(L)\cap \prescript {v}{}{}U(L))x = U_{w_x^{-1}u\beta_1}(L)\cdots U_{w_x^{-1}u\beta_{n}}(L).
\end{align*}
By Proposition~\ref{prop:semiInfiniteTypes}, we obtain a decomposition of the corresponding subset of the affine flag variety
\begin{align*}
U_{w_x^{-1}u\beta_1}(L)\cdots U_{w_x^{-1}u\beta_n}(L)I~\subset~G(L)/I
\end{align*}
into types $\{(n_1,\nu_1),\dotsc,(n_N,\nu_N)\}$ with $n_N\leq n$. Denote by $\mathcal T=\mathcal T_{w_x^{-1}u,\prec,\tau}$ the subset associated with such a type $\tau$ as in Lemma~\ref{lem:typeVarieties}. We have \begin{align*}\mathcal T \subset x^{-1}\prescript{uw_0}{}{}U(L)yI = \prescript{w_x^{-1} uw_0}{}{}U(L)x^{-1} yI\end{align*}
if and only if $\tau$ is admissible for $(x^{-1}y,w_x^{-1}u,\prec)$. It remains to compute the dimension of $\mathcal T$ using Proposition~\ref{prop:Tgeom} and \cite[Lemma~2.12]{Schremmer2022_newton}:
\begin{align*}
2\dim \mathcal T =& N - \ell_{w_x^{-1} u}(x^{-1} y) \\=& N - \ell_{w_x^{-1} u}(x^{-1}) - \ell_u(y) = N + \ell_u(x)-\ell_u(y).\qedhere
\end{align*}
\end{proof}
\begin{remark}\begin{enumerate}[(a)]
\item It seems reasonable to expect that each type variety $\mathcal T_{u,\prec,\tau}$ should be a product of affine lines and pointed affine lines over $k$, but the proof of such a statement would probably require undue analysis of the polynomials $f^{(\bullet)}_\bullet$ or some major progress towards Zariski's cancellation problem. We don't need such a precise description.
\item Given $u,v$, there are in general several possible reflection orders satisfying $v^{-1} u =s_{\beta_{n+1}}\cdots s_{\beta_{\#\Phi^+}}$ for $n = \ell(w_0 v^{-1} u)$. While the geometry of the intersection
\begin{align*}
\left((\prescript u{}{}U(L) \cap\prescript v{}{}U(L))xI\right)\cap (\prescript {uw_0}{}U(L)yI)
\end{align*}
does not depend on the choice of reflection order, the decomposition into subsets indexed by types tends to do that, i.e.\ different reflection orders yield different subsets. It is not clear how these are related, aside from the simple observation that subsets of maximal dimension parametrize irreducible components of maximal dimension. We will prove that the \emph{number} of such subsets $x\mathcal T_\tau$ of any given dimension is independent of the chosen reflection order.
\item For each given $(x,u,\prec)$, there exist only finitely many admissible types. This is straightforward to prove directly, and will immediately follow from a later result (cf.\ Lemma~\ref{lem:typesAndPaths}). So Theorem~\ref{thm:semiInfiniteOrbitsViaTypes} provides a decomposition into finitely many locally closed pieces.
\end{enumerate}
\end{remark}
\begin{example}
Consider the group $G = \GL_3$. Let $T$ be the torus of diagonal matrices and $B$ be the upper triangular matrices. Let $u=v=w_0$ and 
\begin{align*}
x = w_0t^{\rho^\vee} = \begin{pmatrix}0&0&t^{-1}\\0&1&0\\t&0&0\end{pmatrix}.
\end{align*}
We denote our simple roots by $\Delta = \{\alpha,\beta\}$ corresponding to the diagonal matrices $\dot\alpha = \diag(t,t^{-1},0)$ and $\dot\beta = (0,t,t^{-1})$. We choose the reflection order $\alpha\prec\alpha+\beta\prec\beta$. Then the admissible types for $(x,u,\prec)$ are given by the cardinality $1$ type $\{(-\alpha-\beta,-1)\}$ as well as the cardinality $3$ types
\begin{align*}
\{(-\beta,0),(-\alpha-\beta,-1),(-\alpha,0)\},\quad \{(-\alpha,-1),(-\alpha-\beta,0),(-\beta,-1)\}.
\end{align*}
We see that the intersection
\begin{align*}
\prescript{w_0}{}{}U(L)I~\cap~U(L)xI~\subset~G(L)/I
\end{align*}
has dimension
\begin{align*}
\frac 12\left(3-\ell_{w_0}(x)\right) = \frac 12\left(3-\langle -\rho^\vee,2\rho\rangle +\ell(w_0)\right) = \frac 12(3+4+3) = 5,
\end{align*}
and the number of $5$-dimensional irreducible components is two.

\end{example}
\section{Double Bruhat graph}\label{sec:DBG}
There is a more convenient and natural way to encode the datum of an admissible type $(n_1,\nu_1),\dotsc,(n_N,\nu_N)$. This construction is due to Naito--Watanabe \cite[Section~5.1]{Naito2017}, used originally to describe periodic $R$-polynomials of affine Weyl groups. 
\begin{definition}
Let $\Phi^+ = \{\beta_1\prec\cdots\prec\beta_{\#\Phi^+}\}$ be a reflection order and $v,w\in W$.
\begin{enumerate}[(a)]
\item The \emph{double Bruhat graph} $\DBG(W)$ is a finite directed graph. Its set of vertices is $W$. For each $w\in W$ and $\alpha\in \Phi^+$, there is an edge $w\xrightarrow{\alpha} ws_\alpha$.
\item A \emph{non-labelled path} $\overline p$ in $\DBG(W)$ is a sequence of adjacent edges
\begin{align*}
\overline p : v = u_1\xrightarrow{\alpha_1} u_2\xrightarrow{\alpha_2} \cdots\xrightarrow{\alpha_\ell} u_{\ell+1} = w.
\end{align*}
We call $\overline p$ an unlabelled path from $v$ to $w$ of length $\ell(\overline p) = \ell$. We say $\overline p$ is \emph{increasing} with respect to $\prec$ if $\alpha_1\prec\cdots\prec\alpha_\ell$. We say that $\overline p$ \emph{is bounded by }$n\in\mathbb Z$ if each occurring root $\alpha_i$ has the form $\alpha_i=\beta_j$ for $j\leq n$.
\item A \emph{labelled path} or \emph{path} $p$ in $\DBG(W)$ consists of an unlabelled path
\begin{align*}
\overline p: v = u_1\xrightarrow{\alpha_1}u_2\xrightarrow{\alpha_2}\cdots\xrightarrow{\alpha_\ell} u_{\ell+1}=w
\end{align*}
together with integers $m_1,\dotsc,m_\ell\in\mathbb Z$ subject to the condition
\begin{align*}
m_i\geq \Phi^+(-u_i\alpha_i)=\begin{cases}0,&\ell(u_{i+1})>\ell(u_i),\\
1,&\ell(u_{i+1})<\ell(u_i).\end{cases}
\end{align*}
We write $p$ as
\begin{align*}
p : v = u_1\xrightarrow{(\alpha_1,m_1)}u_2\xrightarrow{(\alpha_2,m_2)}\cdots\xrightarrow{(\alpha_\ell,m_\ell)} u_{\ell+1}=w.
\end{align*}
The \emph{weight} of $p$ is
\begin{align*}
\wt(p) = m_1\alpha_1^\vee+\cdots+m_\ell\alpha_\ell^\vee\in\mathbb Z\Phi^\vee.
\end{align*}
The \emph{length} of $p$ is $\ell(p) = \ell(\overline p) = \ell$. We say that $p$ is \emph{increasing} with respect to $\prec$ if $\overline p$ is. We say that $p$ is bounded by $n\in\mathbb Z$ if $\overline p$ is.
\item The set of all paths from $v$ to $w$ that are increasing with respect to $\prec$ and bounded by $n\in\mathbb Z$ is denoted $\paths^\prec_{\preceq n}(v\Rightarrow w)$. We also write \begin{align*}\paths^\prec(v\Rightarrow w) := \paths^\prec_{\preceq\#\Phi^+}(v\Rightarrow w)\end{align*}
for the set of all increasing paths from $v$ to $w$.
\end{enumerate}
\end{definition}
\begin{example}
This is the double Bruhat graph of type $A_2$, where we denote the simple roots by $\Delta = \{\alpha_1, \alpha_2\}$ and the corresponding simple reflections by $S = \{s_1, s_2\}$. For each root $\alpha \in \Phi^+ = \{\alpha_1, \alpha_2,\alpha_1+\alpha_2\}$ and each $w\in W$, there is an edge $w\rightarrow ws_\alpha$ with label $\alpha$ and the converse edge $ws_\alpha\rightarrow w$ with the same label, making each edge appear doubled (which explains the graph's name).
\begin{align*}
\begin{tikzcd}[ampersand replacement=\&,column sep=2em]
\& s_1 s_2 s_1\ar[dl, harpoon, shift left=0.5ex]\ar[dr, harpoon, shift left=0.5ex]
\ar[ddd, harpoon, shift left=0.5ex]
\& \\ 
s_1 s_2\ar[ru,"{\alpha_1}" {description,xshift=0.5ex,yshift=-.5ex}, harpoon, shift left=0.5ex]\ar[d, harpoon, shift left=0.5ex]\ar[rrd, harpoon, shift left=0.5ex]
\& \& s_2 s_1\ar[lu, "{\alpha_2}" {description,xshift=-0.01ex,yshift=0.3ex}, harpoon, shift left=0.5ex]\ar[d,shift left=0.5ex, harpoon]\ar[dll, shift left=0.5ex, harpoon]
\\ 
s_1\ar[u, "{\alpha_2}" {description, xshift=0.3ex}, harpoon, shift left=0.5ex]\ar[rru, "{\alpha_1 + \alpha_2}" {description,near start,yshift=-.35em}, harpoon, shift left=0.5ex]\ar[dr,harpoon,shift left=0.5ex]
\& \& s_2\ar[u, "{\alpha_1}" {description, xshift=0.3ex}, harpoon, shift left=0.5ex]\ar[llu, "{\alpha_1 + \alpha_2}" {description,near start,xshift=.35em}, harpoon, shift left=0.5ex]\ar[dl, shift left=0.5ex, harpoon]
\\ 
\& 1\ar[ru, "{\alpha_2}" {description, xshift=0.4ex,yshift=-0.4ex}, harpoon, shift left=0.5ex]\ar[lu, "{\alpha_1}" {description, xshift=0.4ex,yshift=0.3ex}, harpoon, shift left=0.5ex]\ar[uuu, "{\alpha_1 + \alpha_2}" {description, near end, xshift=0.4ex}, harpoon, shift left=0.5ex]
\end{tikzcd}
\end{align*}
The two reflection orders are given by
\begin{align*}
&\alpha_1\prec \alpha_1+\alpha_2\prec \alpha_2,\\
&\alpha_2\prec \alpha_1+\alpha_2\prec \alpha_1.
\end{align*}
\end{example}
\begin{remark}
One could similarly study paths in the double Bruhat graph bounded from below by a root $\beta_m$, or even paths such that $\alpha_1,\dotsc, \alpha_{\ell(p)} \in\{\beta_m,\dotsc,\beta_n\}$ for fixed indices $m,n\in\mathbb Z$. However, this extra generality would not yield any new information:

If $w_0 = s_{\alpha_1}\cdots s_{\alpha_{\#\Phi^+}}$ is the reduced expression corresponding to our reflection order and $0\leq m\leq n\leq \#\Phi^+$, we could consider a different reflection order $\prec'$ coming from the reduced word
\begin{align*}
w_0 = s_{\alpha_{m+1}}\cdots s_{\alpha_{\#\Phi^+}} s_{-w_0\alpha_1}\cdots s_{-w_0\alpha_{m}}.
\end{align*}
If we write $\Phi^+ = \{\beta'_1\prec'\cdots\prec'\beta'_{\#\Phi^+}\}$, then
\begin{align*}
\beta_i = s_{\alpha_1}\cdots s_{\alpha_{m}} \beta'_{i-m},\qquad i = m+1,\dotsc,n.
\end{align*}
This would give a one-to-one correspondence of paths which are increasing for $\prec$ and where all roots lie \emph{between} $m$ and $n$ and paths which are increasing for $\prec'$ and bounded by $n-m$.
\end{remark}
The comparison between types and paths is given as follows.
\begin{lemma}\label{lem:typesAndPaths}
Let $\Phi^+ = \{\beta_1\prec\cdots\prec\beta_{\#\Phi^+}\}$ be a reflection order, $x=w t^\mu\in\widetilde W$ and $u\in W$. Let moreover $n\in\{0,\dotsc,\#\Phi^+\}$. 

Let $A$ be the set of all admissible types $\{(n_1,\nu_1),\dotsc,(n_N,\nu_N)\}$ for $(x,u,\prec)$ satisfying the condition $n_1<\cdots<n_N\leq n$.
Define
\begin{align*}
P := \{p\in\paths^\prec_{\preceq n}(w^{-1} u\Rightarrow u)\mid \wt(p) = u^{-1} w\mu\}.
\end{align*}
Then we get a bijective map $A\xrightarrow\sim P$ sending a type $\tau = \{(n_1,\nu_1),\dotsc,(n_N,\nu_N)\}$ of cardinality $N$ to the length $N$ path
\begin{align*}
p : w^{-1} u\xrightarrow{(\beta_{n_1}, m'_1)}\cdots \xrightarrow{(\beta_{n_N},m'_N)} u
\end{align*}
in $P$, where $(\beta'_h,m'_h) := r_{b_N}\cdots r_{b_{h}}(b_h)\in \Phi_\af^+$ and $b_h = (u\beta_{n_h},\nu_h)\in \Phi_\af$ for $h=1,\dotsc,N$.
\end{lemma}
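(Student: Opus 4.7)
The plan is to construct the forward map directly from the formula, verify it takes values in $P$, and then exhibit an explicit inverse via the same recipe run backwards. The key input is Proposition~\ref{prop:semiInfiniteTypes}, which tells us that an admissible type $\tau = \{(n_1,\nu_1),\dotsc,(n_N,\nu_N)\}$ encodes exactly a factorisation $x = r_{b_1} r_{b_2} \cdots r_{b_N}$ in $\widetilde W$ with $b_h = (u\beta_{n_h},\nu_h)$. Projecting to the finite Weyl group yields $w = s_{u\beta_{n_1}} s_{u\beta_{n_2}} \cdots s_{u\beta_{n_N}}$ and, after conjugation by $u$, the identity $u^{-1}wu = s_{\beta_{n_1}} s_{\beta_{n_2}} \cdots s_{\beta_{n_N}}$.

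First I would define the path $p$ with vertices $v_0 := w^{-1}u$ and $v_h := v_{h-1} s_{\beta_{n_h}}$, so that $v_N = u$ by the previous identity. The increasing and boundedness conditions are immediate from $n_1 < \cdots < n_N \leq n$ together with the reflection order. The crucial identification is $\beta'_h = v_{h-1}\beta_{n_h}$: this follows from a direct computation of the finite-root part of $r_{b_N}\cdots r_{b_h}(b_h)$ using the action formula $(wt^\mu)(\alpha,k) = (w\alpha, k - \langle\mu,\alpha\rangle)$ combined with the explicit description of $v_{h-1}$. With this identification in hand, the DBG edge condition $m'_h \geq \Phi^+(-v_{h-1}\beta_{n_h})$ becomes exactly the assertion $(\beta'_h, m'_h) \in \Phi_\af^+$, which is equivalent to the admissibility condition $r_{b_N}\cdots r_{b_{h+1}}(b_h) \in \Phi_\af^-$ recorded in Lemma~\ref{lem:typeVarieties}~(a).

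The main obstacle will be verifying the weight identity $\wt(p) = u^{-1}w\mu$. The cleanest approach I see is to lift each edge to the affine reflection $r_{(\beta'_h,m'_h)}$ and compute their product in $\widetilde W$. Writing $R_h := r_{b_N}\cdots r_{b_h}$ so that $(\beta'_h,m'_h) = R_h(b_h)$ and $r_{(\beta'_h,m'_h)} = R_h r_{b_h} R_h^{-1}$, the cancellation $R_h^{-1} R_{h+1} = r_{b_h}$ gives the telescoping
\begin{align*}
r_{(\beta'_1,m'_1)}\,r_{(\beta'_2,m'_2)}\cdots r_{(\beta'_N,m'_N)} \;=\; R_1 \;=\; x^{-1}.
\end{align*}
Using the recursion $v_k = s_{\beta'_k} v_{k-1}$ together with $v_N = u$, one obtains the auxiliary identity $s_{\beta'_N}\cdots s_{\beta'_{h+1}}(\beta'_h)^\vee = -u\beta_{n_h}^\vee$, so that the translation part of the left hand side comes out to $-u\cdot\wt(p)$. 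Comparing with the translation part $-w\mu$ of $x^{-1} = w^{-1} t^{-w\mu}$ then yields $\wt(p) = u^{-1}w\mu$.

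For the inverse, given $p \in P$ I would read off the edge labels $(\beta_{n_h}, m'_h)$ and the vertices $v_h$, set $\beta'_h := v_{h-1}\beta_{n_h}$, and iteratively define $b_h := -r_{b_{h+1}}\cdots r_{b_N}(\beta'_h,m'_h)$ starting from $h = N$ downward (so the right hand side only involves previously constructed data). The same telescoping computation, run in reverse, shows that $b_h$ has finite-root part $u\beta_{n_h}$, producing a unique $\nu_h \in \mathbb Z$; the admissibility of the resulting type is equivalent to the edge-validity conditions of $p$. The two constructions are manifestly inverse to each other.
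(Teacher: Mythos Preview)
Your argument is correct and follows essentially the same route as the paper. Both proofs hinge on the identity $r_{b_1}\cdots r_{b_N}=x$ from Lemma~\ref{lem:typeVarieties}(a), the rewriting $r_{(\beta'_h,m'_h)}=R_h r_{b_h}R_h^{-1}$, and the observation that the admissibility condition $r_{b_N}\cdots r_{b_{h+1}}(b_h)\in\Phi_\af^-$ is exactly the edge condition $m'_h\ge\Phi^+(-v_{h-1}\beta_{n_h})$; the weight identity is then extracted from the translation part of $R_1=x^{-1}$ in both cases. The only cosmetic difference is that the paper argues the equivalence $\tau\in A\iff p\in P$ for arbitrary data in one sweep, whereas you build the forward map and an explicit inverse separately. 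Your vertex recursion $v_h=v_{h-1}s_{\beta_{n_h}}$ produces the path with edges $\beta_{n_1},\dots,\beta_{n_N}$ in increasing order, which is what is needed for $p\in\paths^\prec_{\preceq n}$; the displayed order in the lemma statement (first edge $\beta_{n_N}$) is a misprint, and your reading is the intended one.
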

\begin{remark}
We remark that if $\mu\notin \mathbb Z\Phi^\vee$ in the above lemma, then one trivially gets $P = \emptyset$ by definition of the double Bruhat graph. Moreover, it is clear that such an $x$ can not be a product of affine reflections, hence trivially $A=\emptyset$ by definition of admissible types. The statement is more interesting when $\mu\in \mathbb Z\Phi^\vee$, that is, when $x$ lies in the non-extended affine Weyl group.
\end{remark}
\begin{proof}[Proof of Lemma~\ref{lem:typesAndPaths}]
For now, we fix arbitrary integers $1\leq n_1<\cdots<n_N\leq n$ and $\nu_1,\dotsc,\nu_N\in\mathbb Z$. Define the set \begin{align*}
\tau = \{(n_1,\nu_1),\dotsc,(n_N,\nu_N)\}\subset\mathbb Z\times\mathbb Z.
\end{align*}
For $h\in\{1,\dotsc,N\}$, define $b_h := (u\beta_{n_h}, \nu_h)\in\Phi_\af$ and
\begin{align*}
b'_h = (\beta'_h,m'_h) := r_{b_N}\cdots r_{b_{h}}(b_h)\in \Phi_\af.
\end{align*}

Finally, we write down something that may or may not be a path in $\DBG(W)$ as
\begin{align*}
p : u s_{\beta_{n_N}}\cdots s_{\beta_{n_1}}\xrightarrow{(\beta_{n_1},m'_1)}\cdots\xrightarrow{(\beta_{n_N},m'_N)} u.
\end{align*}
We want to show that $\tau\in A$ if and only if $p\in P$, since the desired bijection is immediate from this.

The key observation is for $h\in\{1,\dotsc,N\}$ that
\begin{align*}
r_{b_N}\cdots r_{b_{h+1}}(b_h) \in \Phi_\af^-\iff&
r_{b_N}\cdots r_{b_{h}}(b_h) \in \Phi_\af^+\\\iff&
\beta'_h \in \Phi_\af^+\\\iff&
\Bigl(u s_{\beta_{n_N}}\cdots s_{\beta_{n_h}} \beta_{n_h}, m'_h\Bigr)\in \Phi_\af^+
\\\iff&m'_h\geq \Phi^+(u s_{\beta_{n_N}}\cdots s_{\beta_{n_h}}(\beta_{n_h})).
\end{align*}
Thus $\tau$ is an admissible type for some $(\tilde x, u,\prec)$ if and only if $p$ is a well-defined path in the double Bruhat graph. In this case, it is clear that $p$ is increasing with respect to $\prec$ and bounded by $n$.

We calculate
\begin{align*}
r_{b_1}\cdots r_{b_N} =& r_{b'_N}\cdots r_{b'_1} = s_{\beta_N'}\cdots s_{\beta_1'}t^{ m_N' s_{\beta_{1}'}\cdots s_{\beta_{N-1}'} (\beta_N')^{\vee}+\cdots +m_1'(\beta_1')^\vee}.
\end{align*}
In case
\begin{align*}
s_{u\beta_{n_1}}\cdots s_{u\beta_{n_N}} \neq w,
\end{align*}
we see that $\tau$ cannot be admissible for $(x,u,\prec)$ and $p$ does not start in $w^{-1} u$. So assume from now on that
\begin{align*}
s_{u\beta_{n_1}}\cdots s_{u\beta_{n_N}} = w.
\end{align*}
Then we can simplify
\begin{align*}
r_{b_1}\cdots r_{b_N} =& \ldots=wt^{ m_N' s_{\beta_{1}'}\cdots s_{\beta_{N-1}'} (\beta_N')^{\vee}+\cdots +m_1'(\beta_1')^\vee}.
\end{align*}
We compute for $h=1,\dotsc,N$ that
\begin{align*}
&m_h' s_{\beta_1'}\cdots s_{\beta_{h-1}'}(\beta_h') = m_h' (s_{\beta_1'}\cdots s_{\beta_N'}) (s_{\beta_{N}'}\cdots s_{\beta_h'})\beta_h'^\vee
\\=&m_h' w^{-1} s_{\beta_{N}'}\cdots s_{\beta_h'}\beta_h'^\vee
=m_h' w^{-1} u \beta_h^\vee.
\end{align*}
We conclude
\begin{align*}
r_{b_1}\cdots r_{b_N} = w t^{w^{-1}u\wt(p)}.
\end{align*}
Hence $\tau$ is admissible for $(x,u,\prec)$ if and only if $\wt(p) = u^{-1}w\mu$.
\end{proof}
We can state the double Bruhat version of Theorem~\ref{thm:semiInfiniteOrbitsViaTypes} as follows. It can be seen as an analogue of \cite[Theorem~7.1]{Parkinson2009}. This result studies slightly different intersections in the affine flag variety and uses folded alcove walks rather than paths in the double Bruhat graph.
\begin{theorem}\label{thm:semiInfiniteOrbitsViaPaths}
Let $u,v\in W$ and $x=w_x t^{\mu_x},y = w_yt^{\mu_y}\in\widetilde W$. Pick a reflection order $\Phi^+ = \{\beta_1\prec\cdots\prec\beta_{\#\Phi^+}\}$ and an index $n\in\{0,\dotsc,\#\Phi^+\}$ such that
\begin{align*}
u^{-1}v = s_{\beta_{n+1}}\cdots s_{\beta_{\#\Phi^+}}.
\end{align*}
Then we get a decomposition into locally closed subsets
\begin{align*}
\left((\prescript u{}{}U(L) \cap\prescript v{}{}U(L))xI\right)\cap (\prescript {uw_0}{}U(L)yI)=\bigsqcup_p x\mathcal T_p,
\end{align*}
where $p$ runs through all paths $p\in\paths^\prec_{\preceq n}(w_y^{-1}u\Rightarrow w_x^{-1}u)$ such that \begin{align*}
u\wt(p) = w_y\mu_y - w_x\mu_x.\end{align*} Each variety $x\mathcal T_p\subseteq G(L)/I$ is an irreducible smooth affine $k$-scheme of dimension
\begin{align*}
&\dim (x\mathcal T_p) = \dim \mathcal T_p = \frac 12\left(\ell_u(x)-\ell_u(y)+\ell(p)\right).\rightqed
\end{align*}
\end{theorem}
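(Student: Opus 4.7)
The plan is to obtain this statement as a direct translation of Theorem~\ref{thm:semiInfiniteOrbitsViaTypes} through the bijection of Lemma~\ref{lem:typesAndPaths}. Theorem~\ref{thm:semiInfiniteOrbitsViaTypes} already provides the desired decomposition
\begin{align*}
\left((\prescript u{}{}U(L) \cap\prescript v{}{}U(L))xI\right)\cap (\prescript {uw_0}{}U(L)yI)=\bigsqcup_\tau xT_\tau,
\end{align*}
indexed by admissible types $\tau = \{(n_1,\nu_1),\dotsc,(n_N,\nu_N)\}$ for $(x^{-1}y, w_x^{-1}u, \prec)$ satisfying $n_N \leq n$, with each $xT_\tau$ irreducible, smooth, affine of dimension $\tfrac 12(\ell_u(x)-\ell_u(y)+\#\tau)$. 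So everything reduces to identifying the indexing set with the claimed set of paths (and setting $T_p := T_\tau$ for corresponding $\tau$ and $p$).

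First I would carry out the book-keeping for $x^{-1}y$. Writing $x^{-1}y = w t^{\mu}$ with $w = w_x^{-1}w_y$ and $\mu = \mu_y - w_y^{-1}w_x\mu_x$, Lemma~\ref{lem:typesAndPaths} applied to the triple $(x^{-1}y, w_x^{-1}u, \prec)$ together with the bound $n$ yields a bijection between the set of admissible types $\tau$ indexing Theorem~\ref{thm:semiInfiniteOrbitsViaTypes} and the set
\begin{align*}
\{p\in \paths^\prec_{\preceq n}(w^{-1}(w_x^{-1}u)\Rightarrow w_x^{-1}u)\mid \wt(p) = (w_x^{-1}u)^{-1}w\mu\}.
\end{align*}
The source of the path simplifies to $w_y^{-1}w_x \cdot w_x^{-1}u = w_y^{-1}u$, and the target is $w_x^{-1}u$, matching the claim. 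For the weight condition, a straightforward computation
\begin{align*}
(w_x^{-1}u)^{-1}w\mu = u^{-1}w_y(\mu_y - w_y^{-1}w_x\mu_x) = u^{-1}(w_y\mu_y - w_x\mu_x)
\end{align*}
shows that $\wt(p) = (w_x^{-1}u)^{-1}w\mu$ is equivalent to $u\wt(p) = w_y\mu_y - w_x\mu_x$, as desired.

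Finally, under the bijection one has $\ell(p) = N = \#\tau$, so the dimension formula of Theorem~\ref{thm:semiInfiniteOrbitsViaTypes} becomes
\begin{align*}
\dim xT_p = \tfrac 12\left(\ell_u(x)-\ell_u(y)+\ell(p)\right),
\end{align*}
and irreducibility, smoothness, and affineness all transport directly. There is no real obstacle here: the substantive geometric content sits in Proposition~\ref{prop:Tgeom} and Theorem~\ref{thm:semiInfiniteOrbitsViaTypes}, and the combinatorial content in Lemma~\ref{lem:typesAndPaths}. The only place where one must be careful is keeping track of the conjugation $x^{-1}y = w_x^{-1}w_y \cdot t^{\mu_y - w_y^{-1}w_x\mu_x}$ when plugging the data into Lemma~\ref{lem:typesAndPaths}, ensuring that the twist by $w_x^{-1}$ on the Weyl group side of the path exactly cancels against the shift in the weight condition.
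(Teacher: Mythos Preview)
Your proposal is correct and matches the paper's approach: the paper presents Theorem~\ref{thm:semiInfiniteOrbitsViaPaths} as the ``double Bruhat version'' of Theorem~\ref{thm:semiInfiniteOrbitsViaTypes} and gives no further proof (note the \texttt{\textbackslash rightqed}), leaving exactly the translation via Lemma~\ref{lem:typesAndPaths} that you spell out. Your book-keeping for $x^{-1}y = w_x^{-1}w_y\, t^{\mu_y - w_y^{-1}w_x\mu_x}$ and the resulting endpoint and weight identifications are correct.
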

The aim of this section is to develop the basic properties of the double Bruhat graph.
For the remainder of this section, fix a reflection order $\Phi^+ = \{\beta_1\prec\cdots\prec\beta_{\#\Phi^+}\}$. We first introduce a couple of immediate properties about paths in $\paths^\prec_{\preceq\cdot}(\cdot\Rightarrow \cdot)$.
Part (a) of the following lemma is an adaption of the duality anti-automorphism from \cite[Proposition~4.3]{Lenart2015}.
\begin{lemma}\label{lem:pathSymmetries}
Let $u,v\in W$ and $n\in\mathbb Z$.
\begin{enumerate}[(a)]
\item Denote by $\succ$ the reflection order obtained by reversing $\prec$. Then we have a bijection
\begin{align*}
\paths^\prec_{\preceq n}(u\Rightarrow v)\rightarrow& \paths^\succ_{\succeq n}(w_0 v\Rightarrow w_0 u)\\
\left(w_1\xrightarrow{(\alpha_1,n_1)}\cdots\xrightarrow{(\alpha_\ell,n_\ell)}w_{\ell+1}\right)\mapsto&\left(w_0 w_{\ell+1}\xrightarrow{(\alpha_\ell, n_\ell)}\cdots\xrightarrow{(\alpha_1,n_1)}w_0 w_1\right)
\end{align*}
that preserves both the weight and the length of each path.
\item Let $\prec'$ be the reflection order defined by
\begin{align*}
\alpha\prec'\beta\iff -w_0\alpha\prec-w_0\beta.
\end{align*}
Then we a bijection
\begin{align*}
\paths^\prec_{\preceq n}(u\Rightarrow v)\rightarrow& \paths^{\prec'}_{\preceq'n}(w_0 uw_0\Rightarrow w_0 vw_0)\\
\left(w_1\xrightarrow{(\alpha_1,n_1)}\cdots\xrightarrow{(\alpha_\ell,n_\ell)}w_{\ell+1}\right)\mapsto&\left(w_0 w_1w_0 \xrightarrow{(-w_0 \alpha_1,n_1)}\cdots\xrightarrow{(-w_0 \alpha_\ell,n_\ell)}w_0 w_{\ell+1}w_0\right)
\end{align*}
that preserves the lengths of paths. This bijection sends path of weight $\omega$ to a path of weight $-w_0 \omega$.
\item Let $x\in \Omega$ be of length zero in $\widetilde W$ and write it as $x = wt^\mu$. Then we have a bijection
\begin{align*}
\paths^\prec_{\preceq n}(u\Rightarrow v)\rightarrow &
\paths^\prec_{\preceq n}(wu\Rightarrow wv)\\
\left(w_1\xrightarrow{(\alpha_1,n_1)}\cdots\xrightarrow{(\alpha_\ell,n_\ell)}w_{\ell+1}\right)\mapsto&\left(w w_1\xrightarrow{(\alpha_1, n_1-\langle \mu, w_1 \alpha_1\rangle)}\cdots\xrightarrow{(\alpha_\ell, n_\ell-\langle\mu,w_\ell\alpha_\ell\rangle)} w w_{\ell+1}\right)
\end{align*}
that preserves the lengths of paths. This bijection sends a path of weight $\omega$ to a path of weight $\omega + v^{-1}\mu - u^{-1}\mu$.
\end{enumerate}
\end{lemma}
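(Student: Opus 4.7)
The three parts have the same structure: define an explicit transformation on paths, verify that it produces a valid path with the claimed endpoints, check that the positivity constraint $m_i\geq\Phi^+(-u_i\alpha_i)$ together with the ordering and boundedness conditions survive, then exhibit an inverse. The main verification in each part is the compatibility of the positivity constraint with the underlying Weyl-group automorphism.

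For part (a), I would simply check the prescribed formula. A typical edge $u_i\xrightarrow{\alpha_i}u_{i+1}=u_is_{\alpha_i}$ maps to $w_0u_{i+1}\xrightarrow{\alpha_i}w_0u_i$, which is a valid edge of $\DBG(W)$ since $(w_0u_{i+1})s_{\alpha_i}=w_0u_i$. The positivity inequality becomes $m_i\geq\Phi^+(-w_0u_{i+1}\alpha_i)$; using the identity $\Phi^+(-w_0\gamma)=\Phi^+(\gamma)$ together with $u_{i+1}\alpha_i=-u_i\alpha_i$, this reduces to the original $m_i\geq\Phi^+(-u_i\alpha_i)$. A $\prec$-increasing sequence of roots read backwards is $\succ$-increasing by the very definition of the reversed order, and the boundedness condition transfers along the same reversal. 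Applying the transformation twice recovers the identity, so the map is a bijection; length and weight are preserved by inspection.

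For part (b), I define the map by $u_i\mapsto w_0u_iw_0$ and $\alpha_i\mapsto -w_0\alpha_i$. An edge is sent to $w_0u_iw_0\xrightarrow{-w_0\alpha_i}w_0u_iw_0\cdot s_{-w_0\alpha_i}=w_0u_is_{\alpha_i}w_0=w_0u_{i+1}w_0$. The positivity condition transforms to $\Phi^+(-(w_0u_iw_0)(-w_0\alpha_i))=\Phi^+(w_0u_i\alpha_i)=\Phi^+(-u_i\alpha_i)$, matching the original. The map $\alpha\mapsto -w_0\alpha$ is an order isomorphism $(\Phi^+,\prec)\to(\Phi^+,\prec')$ by the definition of $\prec'$, so both monotonicity and the boundedness index are preserved. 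The weight transforms as $\sum m_i(-w_0\alpha_i)^\vee=-w_0\sum m_i\alpha_i^\vee=-w_0\wt(p)$, and the inverse map is obtained by conjugating once more by $w_0$.

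For part (c), the crucial observation is that $x\in\Omega$ acts on $\Phi_\af^+$ as a bijection. The condition $m_i\geq\Phi^+(-u_i\alpha_i)$ at the source $u_i$ is equivalent to $(u_i\alpha_i,m_i)\in\Phi_\af^+$, and applying $x$ yields $(wu_i\alpha_i,\,m_i-\langle\mu, u_i\alpha_i\rangle)\in\Phi_\af^+$. Reading this at the new source $wu_i$, this is exactly the prescribed integer label $m_i'=m_i-\langle\mu,w_i\alpha_i\rangle$ and positivity condition. Since the root labels are unchanged, the $\prec$-increasing and boundedness conditions transfer automatically. For the weight shift I would use the identity $u_i^{-1}\mu-u_{i+1}^{-1}\mu=\langle\mu,u_i\alpha_i\rangle\alpha_i^\vee$, which follows from $u_{i+1}=u_is_{\alpha_i}$ and $s_\gamma\lambda=\lambda-\langle\lambda,\gamma\rangle\gamma^\vee$. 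Telescoping over $i=1,\ldots,\ell$ gives $\sum_i\langle\mu,u_i\alpha_i\rangle\alpha_i^\vee=u^{-1}\mu-v^{-1}\mu$, so the new weight equals $\wt(p)+v^{-1}\mu-u^{-1}\mu$. The inverse is obtained by multiplying by $x^{-1}\in\Omega$.

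None of the three parts is genuinely difficult; the only real obstacle is the bookkeeping. The identity $\Phi^+(-w_0\gamma)=\Phi^+(\gamma)$ drives the positivity verification in (a) and (b), while the fact that elements of $\Omega$ preserve $\Phi_\af^+$ drives it in (c).
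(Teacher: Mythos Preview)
Your proposal is correct and is exactly the kind of straightforward verification the paper intends: the paper's own proof consists of the single sentence ``Straightforward verification.'' Your write-up simply makes explicit the checks (edge validity, the positivity constraint via $\Phi^+(-w_0\gamma)=\Phi^+(\gamma)$ in (a)--(b) and the length-zero permutation of $\Phi_\af^+$ in (c), monotonicity, and the telescoping weight computation) that the paper leaves to the reader.
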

\begin{proof}
Straightforward verification.
\end{proof}
It is rather natural and very fruitful to compare the definition of the double Bruhat graph with the similar concept of the \emph{quantum Bruhat graph}. The quantum Bruhat graph can be defined as a subgraph of the double Bruhat graph, containing all those paths
\begin{align*}
p : u_1\xrightarrow{(\alpha_1,m_1)}\cdots\xrightarrow{(\alpha_\ell,m_\ell)} u_{\ell+1}
\end{align*}
satisfying the additional constraint that for each $n\in\{1,\dotsc,m_\ell\}$ either
\begin{itemize}
\item $\ell(u_{n+1}) = \ell(u_n)+1$ and $m_n=0$ or
\item $\ell(u_{n+1}) = \ell(u_n)+1-\langle \alpha_n^{\vee},2\rho\rangle$ and $m_n=1$.
\end{itemize}
The quantum Bruhat graph has been introduced by Brenti-Fomin-Postnikov \cite{Brenti1998} in order to study certain solutions to the Yang--Baxter equations related to the quantum Chevalley-Monk formula. It has since occurred frequently in literature on quantum cohomology of flag varieties, e.g.\ in \cite{Postnikov2005}. Due to its relationship to the affine Bruhat order discovered by Lam-Shimozono \cite{Lam2010}, it since has played a major role in the study of affine Bruhat order and affine Deligne--Lusztig varieties, e.g.\ in \cite{Milicevic2021, Milicevic2020, He2021d, Schremmer2024_bruhat}.

The initial article \cite{Brenti1998} is of special interest to us, since we may identify its \emph{main result} as a crucial statement on the double Bruhat graph. The authors derive the fundamental properties of the quantum Bruhat graph as an application of their main result, which we may recognize as the aforementioned embedding of the quantum Bruhat graph into the double Bruhat graph.

Let us recall the main result of \cite{Brenti1998} in the authors' language. They construct a family of solutions to the Yang--Baxter equations for the finite Weyl group $W$ as follows: Choose a field $K$ of characteristic zero, and \emph{multiplicative functions} $E_1, E_2:\Phi^+\rightarrow K$, i.e.\ functions satisfying
\begin{align*}
E_i(\alpha+\beta)=E_i(\alpha)E_i(\beta)\text{ whenever } \alpha,\beta,\alpha+\beta\in\Phi^+.
\end{align*}
We have to assume that there is no root $\alpha\in\Phi^+$ such that $E_1(\alpha)=E_2(\alpha)=0$.
Choose moreover constants $\kappa_\alpha\in K$ depending only on the length of $\alpha\in\Phi^+$. Define for each $\alpha\in \Phi^+$ the following $K$-linear endomorphism of the group algebra $K[W]$:
\begin{align*}
R_\alpha : K[W]\rightarrow K[W],w\mapsto \begin{cases}w+p_\alpha s_\alpha w,&\ell(s_\alpha w)>\ell(w),\\
w+q_\alpha s_\alpha w,&\ell(s_\alpha w)<\ell(w).\end{cases}
\end{align*}
Here, we define the scalars
\begin{align*}
p_\alpha = \frac{\kappa_\alpha E_1(\alpha)}{E_1(\alpha)-E_2(\alpha)},\quad q_\alpha = \frac{\kappa_\alpha E_2(\alpha)}{E_1(\alpha)-E_2(\alpha)}\in K.
\end{align*}
Then the linear functions $\{R_\alpha\}_{\alpha\in \Phi^+}$ satisfy the \emph{Yang--Baxter equations}. We do not wish to recall how these equations are defined, referring the reader to the original article of Brenti-Fomin-Postnikov for the details. We do want to note, however, the following consequence of the Yang--Baxter equations, which is proved completely analogously to \cite[Proposition~2.5]{Brenti1998}:
\begin{proposition}\label{prop:ybsolution}
Let $\Phi^+ = \{\beta_1\prec\cdots\prec\beta_{\#\Phi^+}\} = \{\gamma_1\prec'\cdots\prec'\gamma_{\#\Phi^+}\}$ be two reflection orders on $\Phi^+$ and $0\leq n\leq \#\Phi^+$ such that
\begin{align*}
&s_{\beta_1}\cdots s_{\beta_n} = s_{\gamma_1}\cdots s_{\gamma_n}.
\end{align*}
Then
\begin{align*}
R_{\beta_1}\cdots R_{\beta_n} = R_{\gamma_1}\cdots R_{\gamma_n}
\end{align*}
as endomorphisms on $K[W]$.\rightqed
\end{proposition}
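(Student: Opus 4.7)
The plan is to follow the strategy of Brenti--Fomin--Postnikov \cite[Proposition~2.5]{Brenti1998}: reduce the claim to the rank-two Yang--Baxter equations via Matsumoto's theorem. First, I would invoke Lemma~\ref{lem:dyer} to encode the reflection order $\prec$ by a reduced word $w_0 = s_{\alpha_1}\cdots s_{\alpha_{\#\Phi^+}}$ with $\beta_i = s_{\alpha_1}\cdots s_{\alpha_{i-1}}(\alpha_i)$. A direct induction then gives the identity $s_{\beta_1}\cdots s_{\beta_n} = s_{\alpha_1}\cdots s_{\alpha_n}$ as a reduced expression. Similarly, $\prec'$ furnishes a reduced expression $s_{\alpha'_1}\cdots s_{\alpha'_n}$ for the same element $u := s_{\beta_1}\cdots s_{\beta_n}$.

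By Tits' theorem, the two reduced words $s_{\alpha_1}\cdots s_{\alpha_n}$ and $s_{\alpha'_1}\cdots s_{\alpha'_n}$ for $u$ are connected by a finite sequence of braid moves. Hence the key combinatorial step is to show that a single braid move corresponds to a purely local modification of the reflection order: if the move is applied to the subword at positions $i,\dots,i+m-1$ (where $m$ is the order of $s_{\alpha_i}s_{\alpha_{i+1}}$), then the reflections $\beta_j$ with $j<i$ or $j>i+m-1$ remain unchanged, while the set $\{\beta_i,\dots,\beta_{i+m-1}\}$ coincides with the full set of positive roots of the rank-two subsystem $\Phi_0^+$ obtained from $\mathbb{Z}\alpha_i+\mathbb{Z}\alpha_{i+1}$ after conjugation by $s_{\alpha_1}\cdots s_{\alpha_{i-1}}$, listed in one of the two reflection orders of $\Phi_0^+$. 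The braid move swaps precisely between these two orders.

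Granting this reduction, the proposition follows from its rank-two case, which is the Yang--Baxter equation itself: one must verify $R_{\beta_1}\cdots R_{\beta_m} = R_{\gamma_1}\cdots R_{\gamma_m}$ when $\Phi^+$ is a rank-two root system and $\{\beta_1\prec\cdots\prec\beta_m\}$, $\{\gamma_1\prec'\cdots\prec'\gamma_m\}$ are its two reflection orders. This is a direct calculation in each of the types $A_1\times A_1$, $A_2$, $B_2$, and $G_2$, carried out in \cite[Theorem~2.2]{Brenti1998}. The main obstacle is the rank-two computation in type $G_2$, which is combinatorially dense; it is however entirely mechanical once both sides are expanded on the basis $K[W]$ of the corresponding dihedral Weyl group and the coefficients of each group element are compared, using the explicit formulas for $p_\alpha,q_\alpha$ and the multiplicativity of $E_1,E_2$.
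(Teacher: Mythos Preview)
Your proposal is correct and matches the paper's treatment exactly. The paper does not give its own proof of this proposition: it simply states the result with a \textqed\ and remarks that it ``is proved completely analogously to \cite[Proposition~2.5]{Brenti1998}'', which is precisely the Matsumoto/Tits reduction to the rank-two Yang--Baxter identities that you have sketched.
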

How is this related to the double Bruhat graph? We can make the following choices, which are essentially universal\footnote{We may focus on irreducible root systems, where only one of the two functions $E_1, E_2$ is allowed to vanish on the highest root. If this is WLOG $E_1$, replacing $(E_1, E_2)$ by $(1, E_2/E_1)$ yields the same solutions $R_\bullet$. Now a universal solution would be given by choosing $E_1(\alpha) = e^{\alpha}$ and $\kappa_\alpha \in\{\kappa_s,\kappa_l\}$ for the subring $R$ contained in our choice of $K$ generated by these values and the inverses $1/(E_1(\alpha)-1)$. However, $R$ is not a field, and we would like to expand the geometric series.}:

Write $\Delta = \{\alpha_1,\dotsc,\alpha_{\#\Delta}\}$ and let $K$ be the field of formal Laurent series over $\mathbb Q$ in $2+{\#\Delta}$ formal variables, denoted
\begin{align*}
K = \mathbb Q\doubleparen{\kappa_s,\kappa_l,e^{\alpha_1},\dotsc,e^{\alpha_{\#\Delta}}}.
\end{align*}
To an element of the root lattice $\lambda =c_1\alpha_1+\cdots + c_{\#\Delta}\alpha_{\#\Delta}$, we asssocite the element $e^\lambda = (e^{\alpha_1})^{c_1}\cdots (e^{\alpha_{\#\Delta}})^{c_{\#\Delta}}\in K$. Define $E_1(\alpha) = 1$ and $E_2(\alpha)=e^\alpha\in K$ for all $\alpha\in\Phi^+$. We put $\kappa_\alpha = \kappa_s$ if $\alpha$ is short and $\kappa_\alpha = \kappa_l$ if $\alpha$ is long. Then we observe
\begin{align*}
R_\alpha(w) = w + \frac{\kappa_\alpha e^{\Phi^+(-w^{-1}\alpha)\alpha}}{1-e^\alpha}s_\alpha w = w + \sum_{i\geq \Phi^+(-w^{-1}\alpha)} \kappa_\alpha e^{i\alpha}s_\alpha w.
\end{align*}
This basically describes all paths from $w^{-1}$ to $w^{-1}$ or $w^{-1}s_\alpha$ in the double Bruhat graph associated to the dual root system, with the restriction that the only occurring edge may be $w^{-1}\xrightarrow{\alpha^\vee} w^{-1}s_\alpha$. Composition of the linear operators $R_\bullet$ along a reflection order recovers precisely our notion of paths in the double Bruhat graph which are increasing with respect to that reflection order.
\begin{corollary}\label{cor:reflectionOrderInvariance}
Let $\Phi^+ = \{\beta_1\prec\cdots\prec\beta_{\#\Phi^+}\} = \{\gamma_1\prec'\cdots\prec'\gamma_{\#\Phi^+}\}$ be two reflection orders on $\Phi^+$ and $0\leq n \leq \#\Phi^+$ such that
\begin{align*}
&s_{\beta_1}\cdots s_{\beta_n} = s_{\gamma_1}\cdots s_{\gamma_n}.
\end{align*}
Let $u,v\in W$, $\mu\in\mathbb Z\Phi^\vee$ and $\ell_s, \ell_l\in\mathbb Z_{\geq 0}$.

Let $p_\prec$ be the number of paths $p\in\paths^\prec_{\preceq n}(u\Rightarrow v)$ such that $\wt(p) = \mu$, $\ell(p) = \ell_s+\ell_l$ and the number of short (resp.\ long) roots occurring as labels in $p$ is equal to $\ell_s$ (resp.\ $\ell_l$). Define $p_{\prec'}$ similarly. Then $p_\prec=p_{\prec'}$.
\end{corollary}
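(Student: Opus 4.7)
My plan is to extract this corollary directly from Proposition~\ref{prop:ybsolution}. First I would specialize to the parameters described in the paragraph preceding the statement: take $K = \mathbb Q\doubleparen{\kappa_s,\kappa_l,e^{\alpha_1},\dotsc,e^{\alpha_n}}$, set $E_1\equiv 1$, $E_2(\alpha) = e^{\alpha}$, and $\kappa_\alpha\in\{\kappa_s,\kappa_l\}$ according to the length of $\alpha$. Proposition~\ref{prop:ybsolution} then yields the endomorphism identity $R_{\beta_1}\cdots R_{\beta_n} = R_{\gamma_1}\cdots R_{\gamma_n}$ on $K[W]$, which is the only substantive input I will need; everything else is bookkeeping.

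The main step is to show that, when both sides of this identity are applied to a basis vector $v\in W$ and one reads off the coefficient of another basis vector $u\in W$, the resulting element of $K$ is precisely the generating polynomial
\begin{align*}
\sum_{p} \kappa_s^{\ell_s(p)}\,\kappa_l^{\ell_l(p)}\, e^{\wt(p)}
\end{align*}
in which $p$ runs over $\paths^\prec_{\preceq n}(u\Rightarrow v)$ (in a suitable direction, after passing to the dual root system so that weights in the coroot lattice appear naturally as exponents of the $e$'s), and $\ell_s(p),\ell_l(p)$ denote the numbers of short resp.\ long labels in $p$. This should follow by iterating the closed-form expansion $R_\alpha(w) = w + \sum_{i\geq \Phi^+(-w^{-1}\alpha)}\kappa_\alpha\, e^{i\alpha}s_\alpha w$ appearing just before Proposition~\ref{prop:ybsolution}: each binary choice at the $j$-th step of whether to include $s_{\beta_j}$ matches up exactly with whether $\beta_j$ does or does not appear as a label of $p$, and the exponents of $e$ and of $\kappa_\bullet$ accumulated along the way telescope into $\wt(p)$ and $(\ell_s(p),\ell_l(p))$, respectively. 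The exact constraint $i\geq \Phi^+(-w^{-1}\alpha)$ matches the path-admissibility condition $m_i\geq \Phi^+(-u_i\alpha_i)$ in the definition of the double Bruhat graph.

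Once this expansion is established for both orders $\prec$ and $\prec'$, the conclusion is purely formal. The monomials $\kappa_s^a\kappa_l^b e^{\mu}$ for $(a,b,\mu)\in\mathbb Z_{\geq 0}^2\times\mathbb Z\Phi^\vee$ are linearly independent over $\mathbb Q$ in $K$, so equating the two generating polynomials (which the operator identity forces) and extracting the coefficient of the single monomial $\kappa_s^{\ell_s}\kappa_l^{\ell_l}e^{\mu}$ gives $p_\prec = p_{\prec'}$, as required. The main obstacle I anticipate is purely notational, namely the direction/duality conventions that relate the composition order in $R_{\beta_1}\cdots R_{\beta_n}$ to whether the associated paths start or end at $u$ and whether their weights live in $\mathbb Z\Phi$ or $\mathbb Z\Phi^\vee$; once a single consistent convention is fixed (likely by passing to the dual root system exactly as flagged in the discussion preceding the statement), the bijection between monomial contributions and paths is transparent and the argument concludes immediately.
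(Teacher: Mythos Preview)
Your proposal is correct and is essentially the paper's own proof: construct the $R_\bullet$ for the dual root system with the specific choice of $E_1,E_2,\kappa_\bullet$, apply Proposition~\ref{prop:ybsolution}, and read off the coefficient of a single monomial $\kappa_\bullet^{\cdots} e^\mu\, w$ in $K[W]$. The only bookkeeping you flagged as an obstacle is indeed the only content: in the paper's version one applies $R_{\beta_n^\vee}^\vee\cdots R_{\beta_1^\vee}^\vee$ to $u^{-1}$ and reads off the coefficient of $v^{-1}$, and the passage to $\Phi^\vee$ swaps short and long, so the monomial to extract is $\kappa_l^{\ell_s}\kappa_s^{\ell_l}e^\mu$.
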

\begin{proof}
Construct the operators $R_\bullet^\vee$ as above for the dual root system $\Phi^\vee$, and consider the $\mathbb Q$-coefficient of $\kappa_l^{\ell_s}\kappa_s^{\ell_l}e^\mu v^{-1}$ in the expansion of
\begin{align*}
R_{\beta_{n}^\vee}^\vee\cdots R_{\beta_1^\vee}^\vee u^{-1}\in K[W].
\end{align*}
By construction, this number is equal to $p_\prec$. By Proposition~\ref{prop:ybsolution}, it is also equal to $p_{\prec'}$.
\end{proof}
It is convenient to use the language of \emph{multisets} when keeping track of the lengths and weights of paths. Recall that a multiset is a modification of the concept of a set, where elements are allowed to be contained multiple times in a multiset.

Formally, we may define a multiset $M$ as a tuple $(\abs M, m)$ where $\abs M$ is any set and $m : \abs M \rightarrow\mathbb Z_{\geq 1}\cup\{+\infty\}$ is a function (to be thought of as counting how often an element occurs in $M$). We write $x\in M$ meaning $x\in \abs M$, and say that $x$ has multiplicity $m(x)$ in $M$. If $x\notin M$, we say that $x$ has multiplicity zero in $M$.

If $f$ is a map from $\abs M$ to some abelian group (e.g.\ the real numbers), we write
\begin{align*}
\sum_{x\in M} f(x):=\sum_{x\in \abs M} m(x) f(x),
\end{align*}
meaning that elements are summed with multiplicity (depending on the function and the abelian group, such a sum may or may not be well-defined). The cardinality of $M$ can then be defined as
\begin{align*}
\#M := \sum_{x\in M}1\in\mathbb Z_{\geq 0}\cup\{\infty\}.
\end{align*}

If $M, M'$ are two multi-sets, we define their additive union $M\cup M'$ by declaring that $x$ has multiplicity $m_1+m_2$ in $M\cup M'$ where $m_1\in \mathbb Z_{\geq 0}\cup \{+\infty\}$ is the multiplicity of $x$ in $M$ and $m_2$ the multiplicity of $x$ in $M'$.

When explicitly writing down multi-sets via a list of elements (the number of occurrences expressing the multiplicity), we use the notation $\{\cdot\}_m$ to distinguish from the usual set notation $\{\cdot\}$.
\begin{definition}\label{def:weightMultiset}
Let $\Phi^+ = \{\beta_1\prec\cdots\prec\beta_{\#\Phi^+}\}$ be a reflection order and $n\in\{0,\dotsc,\#\Phi^+\}$.
\begin{enumerate}[(a)]
\item We write
\begin{align*}
\pi_{\succ n} = s_{\beta_{n+1}}\cdots s_{\beta_{\#\Phi^+}}\in W.
\end{align*}
\item If $u,v\in W$, we denote
\begin{align*}
\wts(u\Rightarrow v\dashrightarrow v\pi_{\succ n})
\end{align*}
to be the multiset
\begin{align*}
\Bigl\{(\wt(p),\ell(p))\mid p\in \paths^\prec_{\preceq n}(u\Rightarrow v)\Bigr\}_m,
\end{align*}
i.e.\ the multiplicity of $(\omega,e)\in \wts(u\Rightarrow v\dashrightarrow v\pi_{\succ n})$ is equal to the number of paths in $\paths^\prec_{\preceq n}(u\Rightarrow v)$ of weight $\omega$ and length $e$.

We use the shorthand notation
\begin{align*}
\wts(u\Rightarrow v):=&\wts(u\Rightarrow v\dashrightarrow v).
\end{align*}
\end{enumerate}
\end{definition}
The reflection order does not occur any more in the $\wts(\cdots)$-notation, due to Corollary~\ref{cor:reflectionOrderInvariance} (and the usual observation $w_0 = s_{\beta_1}\cdots s_{\beta_{\#\Phi^+}}$). From Lemma~\ref{lem:reflectionOrderProperties}, we see that $\ell(\pi_{\succ n}) = \#\Phi^+ - n$ and that for each $u\in W$, one may find a suitable reflection order $\prec$ with $u = \pi_{\succ \#\Phi^+ - \ell(u)}$. Hence the notation $\wts(u\Rightarrow v\dashrightarrow w)$ is well-defined for all $u,v,w\in W$.

We note the following immediate properties.
\begin{lemma}\label{lem:qbgNonEmptiness}
Let $u,v,v'\in W$. Then the multiset $\wts(u\Rightarrow v\dashrightarrow v')$ is non-empty if and only if the following inequality on the Bruhat order of $W$ is satisfied:
\begin{align*}
v^{-1} v' \leq u^{-1} v'.
\end{align*}
In this case, we have
\begin{align*}
\max \{e\mid (\omega,e)\in \wts(u\Rightarrow v\dashrightarrow v')\} = \ell(u^{-1} v')-\ell(v^{-1} v').
\end{align*}
\end{lemma}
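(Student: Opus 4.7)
The plan is to translate paths into subwords of a reduced expression of $w_0$. By Lemma~\ref{lem:dyer}, the reflection order $\prec$ determines a reduced word $w_0 = s_{\alpha_1}\cdots s_{\alpha_{\#\Phi^+}}$ with $\beta_i = s_{\alpha_1}\cdots s_{\alpha_{i-1}}(\alpha_i)$. Unpacking definitions, a path in $\paths^\prec_{\preceq n}(u \Rightarrow v)$ is determined, up to the freely chosen integer labels $m_j$ (which satisfy only a lower bound and hence do not affect existence or length), by a subset $I = \{i_1 < \cdots < i_\ell\} \subseteq \{1,\ldots,n\}$ with $u^{-1}v = s_{\beta_{i_1}}\cdots s_{\beta_{i_\ell}}$; conversely every such subset yields a path.

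First I would establish the key combinatorial identity
\[
s_{\beta_{i_1}}\cdots s_{\beta_{i_\ell}}\cdot s_{\alpha_1}\cdots s_{\alpha_n} \;=\; \prod_{k \in \{1,\ldots,n\}\setminus I} s_{\alpha_k},
\]
the right-hand product taken in increasing order of $k$. Substituting the formula $s_{\beta_{i_j}} = (s_{\alpha_1}\cdots s_{\alpha_{i_j-1}})\,s_{\alpha_{i_j}}\,(s_{\alpha_{i_j-1}}\cdots s_{\alpha_1})$ and using the telescoping cancellation $(s_{\alpha_{i_j-1}}\cdots s_{\alpha_1})(s_{\alpha_1}\cdots s_{\alpha_{i_{j+1}-1}}) = s_{\alpha_{i_j}}\cdots s_{\alpha_{i_{j+1}-1}}$ together with $s_{\alpha_{i_j}}^2 = 1$ collapses the left-hand side to $\bigl(\prod_{k \in \{1,\ldots,i_\ell\}\setminus\{i_1,\ldots,i_{\ell-1}\}} s_{\alpha_k}\bigr)\cdot(s_{\alpha_{i_\ell-1}}\cdots s_{\alpha_1})$; the final multiplication by $s_{\alpha_1}\cdots s_{\alpha_n}$ then clears the trailing block and produces one further $s_{\alpha_{i_\ell}}^2 = 1$ cancellation. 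The identity converts the existence of $I$ into the existence of its complement $K = \{1,\ldots,n\}\setminus I$ with $u^{-1}v\cdot s_{\alpha_1}\cdots s_{\alpha_n} = \prod_{k \in K} s_{\alpha_k}$ (in increasing order), and it exchanges cardinalities via $|I| = n - |K|$.

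Next I would invoke the Subword Property of the Bruhat order. From the proof of Lemma~\ref{lem:reflectionOrderProperties}(b), $s_{\alpha_1}\cdots s_{\alpha_n}$ is a reduced expression for $\pi_{\succ n}w_0$. The Subword Property then says that an element $w \in W$ can be written as some such increasing subword $\prod_{k \in K} s_{\alpha_k}$ if and only if $w \leq \pi_{\succ n}w_0$ in Bruhat order, and that the minimum such $|K|$ equals $\ell(w)$. Therefore $\paths^\prec_{\preceq n}(u \Rightarrow v)$ is non-empty iff $u^{-1}v\cdot\pi_{\succ n}w_0 \leq \pi_{\succ n}w_0$, and whenever this holds the maximum path length equals $n - \ell(u^{-1}v\cdot\pi_{\succ n}w_0)$.

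The final step uses the order-reversing involution $w \mapsto ww_0$, with $\ell(ww_0) = \#\Phi^+ - \ell(w)$. Since $v^{-1}v' = \pi_{\succ n}$, the Bruhat condition becomes $\pi_{\succ n} \leq u^{-1}v\cdot\pi_{\succ n} = u^{-1}v'$, i.e.\ $v^{-1}v' \leq u^{-1}v'$; and the length formula simplifies to $n - (\#\Phi^+ - \ell(u^{-1}v')) = \ell(u^{-1}v') - \ell(\pi_{\succ n}) = \ell(u^{-1}v') - \ell(v^{-1}v')$. The only technical step is the telescoping identity above; once it is in hand, the rest is a direct application of standard Bruhat-order facts.
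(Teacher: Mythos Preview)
Your proof is correct and follows essentially the same route as the paper's own argument: both translate increasing paths into subwords of the reduced expression $s_{\alpha_1}\cdots s_{\alpha_n}$ for $\pi_{\succ n}w_0$ via the identity $s_{\beta_{i_1}}\cdots s_{\beta_{i_\ell}}\cdot s_{\alpha_1}\cdots s_{\alpha_n} = s_{\alpha_1}\cdots\widehat{s_{\alpha_{i_1}}}\cdots\widehat{s_{\alpha_{i_\ell}}}\cdots s_{\alpha_n}$, then invoke the subword criterion for Bruhat order and the order-reversing involution $w\mapsto ww_0$. The only difference is cosmetic: you spell out the telescoping proof of the identity in more detail than the paper does.
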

\begin{proof}
Let us pick a reflection order $\Phi^+ = \{\beta_1\prec\cdots\prec\beta_{\#\Phi^+}\}$ with $v' = v\pi_{\succ n}$. Write the corresponding reduced word as $w_0 = s_{\alpha_1}\cdots s_{\alpha_{\#\Phi^+}}$.

The multiset in question is non-empty if there is a sequence $1\leq  i_1<\cdots<i_e\leq n$ of indices such that
\begin{align*}
&v = u s_{\beta_{i_1}}\cdots s_{\beta_{i_e}}
\\\iff&vs_{\alpha_1}\cdots s_{\alpha_n} = u s_{\alpha_1}\cdots\widehat{s_{\alpha_{i_1}}}\cdots \widehat{s_{\alpha_{i_e}}}\cdots s_{\alpha_n}
\\\iff&u^{-1} v s_{\alpha_1}\cdots s_{\alpha_n} = s_{\alpha_1}\cdots \widehat{s_{\alpha_{i_1}}}\cdots \widehat{s_{\alpha_{i_e}}}\cdots s_{\alpha_n}.
\end{align*}
Using the subword criterion for Bruhat order, the existence of such indices $i_1,\dotsc,i_e$ is equivalent to the Bruhat order inequality
\begin{align*}
u^{-1} v s_{\alpha_1}\cdots s_{\alpha_n} \leq s_{\alpha_1}\cdots s_{\alpha_n},
\end{align*}
and the maximal number $e$ is equal to the difference in lengths of the two sides. Now we compute
\begin{align*}
s_{\alpha_1}\cdots s_{\alpha_n} = s_{\beta_{n+1}}\cdots s_{\beta_{\#\Phi^+}} s_{\alpha_1}\cdots s_{\alpha_{\#\Phi^+}} = \pi_{\succ n} w_0.
\end{align*}
Hence the above Bruhat order condition becomes $u^{-1} v' w_0\leq v^{-1} v' w_0$.
The claim follows using the Bruhat order anti-automorphism induced by multiplication by $w_0$.
\end{proof}
\begin{remark}
We can use these multisets to summarize Theorem~\ref{thm:semiInfiniteOrbitsViaPaths} as follows: The number of pieces $\mathcal T_p$ of dimension $d$ occurring in
\begin{align*}
\Bigl((\prescript u{}{}U(L)\cap \prescript v{}{}U(L))xI\Bigr)\cap (\prescript{uw_0}{}{}U(L)yI)
\end{align*}
is equal to the multiplicity of the tuple
\begin{align*}
(u^{-1} w_y \mu_y - u^{-1} w_x\mu_x, 2d-\ell_u(x)+\ell_u(y))
\end{align*}
in the multiset
\begin{align*}
\wts(w_y^{-1}u\Rightarrow w_x^{-1} u\dashrightarrow w_x^{-1} v).
\end{align*}
\end{remark}
We finish this section by comparing the double Bruhat graph more directly to the quantum Bruhat graph.
\begin{proposition}\label{prop:qbgVsDbg}
Let $u,v\in W$. Denote by $d(u\Rightarrow v)$ the distance of a shortest path in the quantum Bruhat graph from $u$ to $v$, and by $\wt(u\Rightarrow v)$ the weight of such a path. Let $(\omega,e)\in \wts(u\Rightarrow v)$.
\begin{enumerate}[(a)]
\item We have $\omega\geq \wt(u\Rightarrow v)$.
\item We have
\begin{align*}
e\leq \langle\omega,2\rho\rangle + \ell(v)-\ell(u).
\end{align*}
If the equality holds, then $\omega = \wt(u\Rightarrow v)$ and $e = d(u\Rightarrow v)$.
\item The multiplicity of
\begin{align*}
(\wt(u\Rightarrow v),d(u\Rightarrow v))
\end{align*}
in the multiset $\wts(u\Rightarrow v)$ is equal to $1$.
\end{enumerate}
\end{proposition}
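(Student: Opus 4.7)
The plan is to handle (b), (c), and (a) in turn, concentrating the real difficulty in (a). For (b), I would examine each DBG edge $u_i\xrightarrow{(\alpha_i, m_i)} u_{i+1}$ individually: combining the constraint $m_i \geq \Phi^+(-u_i\alpha_i)$ with the length estimate $|\ell(u_{i+1})-\ell(u_i)| \leq \ell(s_{\alpha_i}) = \langle\alpha_i^\vee,2\rho\rangle-1$ yields
\[
m_i\langle\alpha_i^\vee,2\rho\rangle + (\ell(u_{i+1})-\ell(u_i)) \geq 1,
\]
with equality exactly in the two cases $(m_i, \ell(u_{i+1})-\ell(u_i)) = (0,1)$ (a Bruhat edge of the QBG) or $(1, 1-\langle\alpha_i^\vee,2\rho\rangle)$ (a quantum edge of the QBG). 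Summing over all edges of $p$ gives the asserted inequality $e\leq \langle\omega,2\rho\rangle+\ell(v)-\ell(u)$, together with the characterization that equality forces $p$ to be a $\prec$-increasing path lying entirely inside the QBG.

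For the equality case of (b) and for part (c), I would invoke the classical theorem of Brenti--Fomin--Postnikov asserting that between any two vertices $u,v$ of the QBG there is a unique $\prec$-increasing path, and that this unique path is a shortest path. Combined with Postnikov's theorem that all shortest QBG paths from $u$ to $v$ share a common weight $\wt(u\Rightarrow v)$, the equality case of (b) then forces $\ell(p)=d(u\Rightarrow v)$ and $\wt(p)=\wt(u\Rightarrow v)$. The same uniqueness statement gives part (c), since the pair $(\wt(u\Rightarrow v),d(u\Rightarrow v))$ is realized by exactly this one path.

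For part (a) I would proceed by induction on $e$. The base case $e=0$ forces $u=v$ and $\omega=0$. For the inductive step, decompose $p = p_1\cdot p_2$ where $p_1 : u \xrightarrow{(\alpha_1,m_1)} u_2$ is the first edge. Viewing $p_2$ as an element of $\wts(u_2\Rightarrow v)$, via a suitable change of reflection order justified by Corollary \ref{cor:reflectionOrderInvariance}, the inductive hypothesis gives $\wt(p_2)\geq\wt(u_2\Rightarrow v)$ in dominance order. Concatenating shortest QBG paths furnishes the triangle inequality $\wt(u\Rightarrow v)\leq \wt(u\Rightarrow u_2)+\wt(u_2\Rightarrow v)$, and (a) now reduces to the auxiliary estimate
\[
\wt(u \Rightarrow us_{\alpha_1})~\leq~m_1\alpha_1^\vee
\]
in dominance order.

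The main obstacle is this last auxiliary estimate. When $u\alpha_1\in\Phi^+$ we have $u<us_{\alpha_1}$ in Bruhat order, and any Bruhat chain between them is a QBG path of weight zero, yielding the estimate immediately. The harder case is $u\alpha_1\in\Phi^-$, where $m_1\geq 1$ and one must exhibit a QBG path from $u$ to $us_{\alpha_1}$ of weight bounded by $\alpha_1^\vee$. To handle this, I would fix a reduced decomposition $s_{\alpha_1}=s_{i_1}\cdots s_{i_{2k+1}}$ into simple reflections, apply them successively to $u$ to obtain a QBG path using only simple-reflection edges, and argue by an auxiliary induction on the height of $\alpha_1^\vee$ that the resulting path accumulates weight at most $\alpha_1^\vee$ in dominance order.
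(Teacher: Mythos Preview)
Your treatment of (b) and (c) is essentially the paper's: the same per-edge inequality, the same appeal to \cite[Theorem~6.4]{Brenti1998} for the equality case and uniqueness. For (a) your overall architecture also matches the paper's --- triangle inequality for QBG weights (Postnikov) plus a per-edge bound $\wt(u_i\Rightarrow u_{i+1})\leq m_i\alpha_i^\vee$ --- though the paper applies this to all edges at once rather than inducting on $e$, and your detour through a change of reflection order is unnecessary since $p_2$ is already $\prec$-increasing.

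The real gap is in your proposed proof of the auxiliary estimate $\wt(u\Rightarrow us_\alpha)\leq\alpha^\vee$ when $u\alpha\in\Phi^-$. Walking along a reduced word $s_\alpha=s_{i_1}\cdots s_{i_{2k+1}}$ does produce a QBG path with only simple-reflection edges, but its weight need \emph{not} be $\leq\alpha^\vee$. Already in type $A_2$ with $\alpha=\alpha_1+\alpha_2$ and $u=w_0$: both reduced words $s_1s_2s_1$ and $s_2s_1s_2$ give three consecutive quantum (down) steps from $w_0$ to $e$, of total weight $2\alpha_1^\vee+\alpha_2^\vee$ resp.\ $\alpha_1^\vee+2\alpha_2^\vee$, neither of which is $\leq\alpha^\vee=\alpha_1^\vee+\alpha_2^\vee$ in dominance order. (The actual shortest QBG path here is the single quantum edge $w_0\xrightarrow{\alpha_1+\alpha_2}e$ of weight $\alpha^\vee$.) So the ``auxiliary induction on the height of $\alpha_1^\vee$'' cannot succeed for this construction as stated. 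The paper avoids this entirely by invoking \cite[Lemma~4.7]{Schremmer2022_newton}, which gives $\wt(u_i\Rightarrow u_{i+1})\leq\Phi^+(-u_i\alpha_i)\alpha_i^\vee$ directly; you should either cite that or find a genuinely different argument for the per-edge bound.
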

\begin{proof}
Pick a reflection order $\prec$, and a path $p\in \paths^\prec(u\Rightarrow v)$ of weight $\omega$ and length $e$. Write it as
\begin{align*}
p: u = u_1\xrightarrow{(\alpha_1,m_1)}\cdots\xrightarrow{(\alpha_e,m_e)} u_{e+1} = v.
\end{align*}
\begin{enumerate}[(a)]
\item Using the triangle inequality for the quantum Bruhat graph \cite[Lemma~1]{Postnikov2005}, we get
\begin{align*}
\wt(u\Rightarrow v)\leq& \wt(u_1\Rightarrow u_2)+\cdots + \wt(u_{e}\Rightarrow u_{e+1}).
\intertext{At each step, apply \cite[Lemma~4.7]{Schremmer2022_newton} to get}
\cdots\leq&\Phi^+(-u_1\alpha_1)\alpha_1^\vee+\cdots + \Phi^+(-u_e \alpha_e)\alpha_e^\vee
\\\leq&m_1\alpha_1^\vee+\cdots + m_e\alpha_e^\vee = \omega.
\end{align*}
\item For each edge $u_i\xrightarrow{(\alpha_i,m_i)}u_{i+1}$, we estimate
\begin{align*}
\langle m_i\alpha_i^\vee,2\rho\rangle = m_i\langle \alpha_i^\vee,2\rho\rangle \geq \Phi^+(-u_i\alpha_i)\langle \alpha_i^\vee,2\rho\rangle.
\end{align*}
If $u_i\alpha_i$ is a positive root, we evaluate $\cdots = 0 \geq \ell(u_{i})-\ell(u_{i+1})+1$. If $u_i\alpha_i$ is a negative root, we apply \cite[Lemma~4.3]{Brenti1998} to see $\langle \alpha_i^\vee,2\rho\rangle\geq \ell(u_i) - \ell(u_{i+1})+1$. In any case, we get
\begin{align*}
\langle m_i\alpha_i^\vee,2\rho\rangle \geq \ell(u_i) - \ell(u_{i+1})+1
\end{align*}
with equality holding if and only if there is an edge $u_i\rightarrow u_{i+1}$ in the quantum Bruhat graph of weight $m_i\alpha_i^\vee$.

Iterating this over all edges of the path $p$, we get the desired inequality.

If equality holds, we get a path in the quantum Bruhat graph
\begin{align*}
p' : u_1\rightarrow \cdots\rightarrow u_e
\end{align*}
of length $e$ and weight $\omega$. Moreover, $p'$ is increasing with respect to our reflection order. By \cite[Theorem~6.4]{Brenti1998}, $p'$ must be a shortest path, so $e = d(u\Rightarrow v)$ and $\omega = \wt(u\Rightarrow v)$.
\item In view of the proof of (b), we have to see that there exists a unique shortest path in the quantum Bruhat graph which is increasing for $\prec$. This statement is found again in \cite[Theorem~6.4]{Brenti1998}.\qedhere
\end{enumerate}
\end{proof}
As an application of our findings so far, we are able to prove the following identity.
\begin{proposition}\label{prop:sioIntersections}
Let $x\in\widetilde W$ and $u\in W$. Then
\begin{align*}
\Bigl(\prescript u{}{}U(L) xI\Bigr)\cap \Bigl(\prescript v{}{}U(L) xI\Bigr) = \Bigl(\prescript u{}{}U(L)\cap\prescript v{}{}U(L)\Bigr)xI.
\end{align*}
\end{proposition}
\begin{proof}
The group $H := \prescript u{}{}U(L)\cap\prescript v{}{}U(L)$ acts on both sides of this equation by left multiplication. Moreover, each $H$-orbit in $\prescript u{}{}U(L)$ contains an element of $\prescript u{}{}U(L)\cap \prescript {vw_0}{}{}U(L)$. We see that each $H$-orbit of 
\begin{align*}
(\prescript u{}{}U(L) xI)\cap (\prescript v{}{}U(L) xI)
\end{align*}
contains an element of the intersection
\begin{align*}
Y := \Bigl(\prescript u{}{}U(L)\cap \prescript {vw_0}{}{}U(L)\Bigr) xI\cap (\prescript v{}{}U(L) xI).
\end{align*}
We claim that $Y = xI$. Indeed, $Y/I\subseteq G(L)/I$ is decomposed into pieces according to the multiset
\begin{align*}
\wts(w_x^{-1} v\Rightarrow w_x^{-1} v\dashrightarrow w_x^{-1} u).
\end{align*}
By Lemma~\ref{lem:qbgNonEmptiness}, we see that this multiset only contains tuples of the form $(\omega,0)$, i.e.\ all elements in there correspond to length zero paths from $w_x^{-1} v$ to $w_x^{-1}v$ in the double Bruhat graph. Since there is exactly one such path, we conclude that $Y/I$ is irreducible of dimension zero, that is, a point. So the inclusion $xI\subseteq Y$ is an equality.

We summarize that
\begin{align*}
&(\prescript u{}{}U(L) xI)\cap (\prescript v{}{}U(L) xI) = HY = HxI = \Bigl(\prescript u{}{}U(L)\cap\prescript v{}{}U(L)\Bigr)xI.\qedhere
\end{align*}
\end{proof}
\section{Affine Deligne--Lusztig varieties}\label{sec:ADLV}
In this section, we study affine Deligne--Lusztig varieties in the affine flag variety. So the parahoric subgroup is $I$ and the variety $X_x(b)$ is parametrized by $x\in\widetilde W$ and $[b]\in B(G)$. In addition to the restriction on split groups over a local field of equal characteristic, we also fix a $\sigma$-conjugacy class $[b]\in B(G)$ whose Newton point $\nu(b)\in X_\ast(T)\otimes\mathbb Q$ is \emph{integral}, i.e.\ contained in $X_\ast(T)$. Then $b=t^{\nu(b)}\in\widetilde W$ is our canonical representative of $[b]\in B(G)$.

Our three main questions regarding the geometry of $X_x(b)$ are answered by Görtz-Haines-Kottwitz-Reumann in terms of intersections of semi-infinite orbits with affine Schubert cells. Due to different choices of Iwahori subgroups, we have a few signs different from the original source. We define the dimension of the empty variety to be $-\infty$.

\begin{theorem}[{\cite[Theorem~6.3.1]{Goertz2006}}]\label{thm:ghkr}
Let $x,z\in \widetilde W$. Then
\begin{align*}
\dim \left(X_x(b)\cap U(L)z^{-1}I/I\right) = \dim\left(IxI/I~\cap~\Bigl(\prescript{z}{}{}U(L)\Bigr)z b z^{-1}I/I\right).
\end{align*}
The number of $(J_b(F)\cap U(L))$-orbits of top dimensional irreducible components in $X_x(b)\cap U(L)z^{-1}I/I$ is equal to the number of top dimensional irreducible components in \begin{align*}&IxI/I~\cap~\Bigl(\prescript{z}{}{}U(L)\Bigr)z b z^{-1}I/I.\rightqed\end{align*}
\end{theorem}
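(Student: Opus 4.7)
The strategy is to parametrize both sides via the unipotent radicals $U(L)$ and $\prescript{z}{}{}U(L)$, and to compare them via a Lang--Steinberg type construction. A point $gI\in X_x(b)\cap U(L)z^{-1}I/I$ admits a representative $g=uz^{-1}$ with $u\in U(L)$, unique modulo right multiplication by $U(L)\cap z^{-1}Iz$. Since $G$ is split, $\sigma(z)=z$ in $\widetilde W$, so the defining condition $g^{-1}b\sigma(g)\in IxI$ becomes $zu^{-1}b\sigma(u)z^{-1}\in IxI$. The bijection $U(L)\xrightarrow{\sim}\prescript{z}{}{}U(L)$, $u\mapsto v:=zu^{-1}z^{-1}$, turns this into
\[v\,(zbz^{-1})\,\sigma(v)^{-1}\in IxI.\]
On the right-hand side, points of $IxI/I\cap \prescript{z}{}{}U(L)\,zbz^{-1}I/I$ correspond to $v'\in \prescript{z}{}{}U(L)$ satisfying $v'\,zbz^{-1}\in IxI$, modulo the right action of $\prescript{z}{}{}U(L)\cap (zbz^{-1})I(zbz^{-1})^{-1}$.

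The bridge between the two conditions is the Lang-type morphism
\[\Lambda\colon \prescript{z}{}{}U(L)\longrightarrow \prescript{z}{}{}U(L),\qquad v\longmapsto v\,(zbz^{-1})\,\sigma(v)^{-1}\,(zbz^{-1})^{-1},\]
which is well-defined because the translation element $zbz^{-1}$ normalizes $\prescript{z}{}{}U(L)$, and which satisfies $\Lambda(v)\,(zbz^{-1}) = v\,(zbz^{-1})\,\sigma(v)^{-1}$. Hence $v$ satisfies the left-hand condition if and only if $\Lambda(v)$ satisfies the right-hand one. The kernel of $\Lambda$ consists of the fixed points of the twisted Frobenius $v\mapsto (zbz^{-1})\sigma(v)(zbz^{-1})^{-1}$, namely $\prescript{z}{}{}U(L)\cap zJ_b(F)z^{-1}$. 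A direct calculation using $j^{-1}b\sigma(j)=b$ shows that under $u\leftrightarrow v$, the left action of $j\in J_b(F)\cap U(L)$ on the left-hand side corresponds to right multiplication of $v$ by $zj^{-1}z^{-1}\in\ker(\Lambda)$, and the map $j\mapsto zj^{-1}z^{-1}$ is a bijection from $J_b(F)\cap U(L)$ onto $\ker(\Lambda)$.

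The remaining step is to invoke Lang--Steinberg for the twisted Frobenius on the pro-unipotent ind-group $\prescript{z}{}{}U(L)$. Filtering this group by the finite-dimensional subquotients obtained from $t$-adic truncations of the root subgroup coordinates in our fixed reflection order, $\Lambda$ becomes surjective on each finite-dimensional piece, and induces a surjective morphism from the left-hand intersection onto the right-hand one whose fibres are precisely the $(J_b(F)\cap U(L))$-orbits. Since $J_b(F)\cap U(L)\subset G(F)$ consists of $F$-rational points, these orbits are zero-dimensional in the $k$-variety, yielding both the equality of dimensions and the claimed bijection between $(J_b(F)\cap U(L))$-orbits of top-dimensional irreducible components on the left and top-dimensional irreducible components on the right. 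The main technical hurdle is the bookkeeping needed to verify that the two right-equivalences (by $U(L)\cap z^{-1}Iz$ on one side and by $\prescript{z}{}{}U(L)\cap (zbz^{-1})I(zbz^{-1})^{-1}$ on the other) are intertwined by $\Lambda$ up to the $\ker(\Lambda)$-action, so that the induced morphism on the quotient $k$-varieties is genuinely well-defined; this should follow from the Iwahori factorization together with the fact that $\Lambda$ respects the chosen filtrations up to a controlled shift.
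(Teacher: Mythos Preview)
The paper does not give its own proof of this theorem: the statement ends with a \verb|\rightqed| and is simply attributed to \cite[Theorem~6.3.1]{Goertz2006}, with only the remark that the irreducible components statement ``follows since the proof method allows to compare admissible subsets'' on the two sides. So there is no proof in the paper to compare against.

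Your sketch is the correct Lang--Steinberg argument that underlies the GHKR proof, and the computations you carry out (the translation $g=uz^{-1}\leftrightarrow v=zu^{-1}z^{-1}$, the identification of $\ker\Lambda$ with $z(J_b(F)\cap U(L))z^{-1}$, and the intertwining of the $J_b(F)\cap U(L)$-action with right translation by $\ker\Lambda$) are accurate. The point you flag as a ``technical hurdle'' is genuine: if $u\mapsto uh$ with $h\in U(L)\cap z^{-1}Iz$, then $v\mapsto (zh^{-1}z^{-1})v$ is a \emph{left} translation by an element of $\prescript{z}{}{}U(L)\cap I$, and one computes $\Lambda(hv)\,zbz^{-1}I = h\cdot\Lambda(v)\,zbz^{-1}I$, so $\Lambda$ does not literally descend to a map of the two quotient varieties. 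GHKR circumvent this not by producing a morphism between the two intersections but by working in the framework of admissible and ind-admissible subsets of $U(L)$ and $\prescript{z}{}{}U(L)$: one shows that the preimages in $U(L)$ and in $\prescript{z}{}{}U(L)$ of the two intersections are admissible of the same (relative) dimension, and that the Lang map is surjective with zero-dimensional fibres on each finite-dimensional truncation. This is exactly the ``comparison of admissible subsets'' the paper alludes to, and it yields both the dimension equality and the bijection on top-dimensional components without ever needing a well-defined morphism between the quotients. Your outline would become a complete proof once recast in that language.
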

The statement on irreducible components is missing in the cited source, but follows since the proof method allows to compare admissible subsets in $X_x(b)\cap U(L)z^{-1}I/I$ with admissible subsets in the other intersection. A generalization of Theorem~\ref{thm:ghkr} to non-integral $[b]$ can be found in \cite[Theorem~11.3.1]{Goertz2010}, but it is unclear how our methods can be applied to that generalized statement, or how a connection to the double Bruhat graph would be given in general.

If we write $z = u t^{\mu_z}$, then
\begin{align*}
\Bigl(\prescript z{}{}U(L)\Bigr)zb z^{-1} I = \prescript u{}{}U(L)t^{u\nu(b)}I.
\end{align*}
Our main result describing the intersection of that set with $IxI$ is the following.
\begin{theorem}\label{thm:generalizedMV}
Let $x=w_xt^{\mu_x}, y = w_yt^{\mu_y}\in\widetilde W$. Let $v_x\in\LP(x)$ and $u\in W$. Pick a reflection order $\prec$ and an index $n\in\{0,\dotsc\#\Phi^+\}$ such that $\pi_{\succ n} = u^{-1}w_x v_x w_0$. Then
\begin{align*}
(IxI/I)\cap (\prescript {uw_0}{}{}U(L)yI/I) = \bigsqcup_{p\in P}\tilde {\mathcal T}_p\subset G(L)/I,
\end{align*}
where
\begin{align*}
P = \{p\in \paths^\prec_{\preceq n}(w_y^{-1} u \Rightarrow w_x^{-1} u)\mid \wt(p) = u^{-1} w_y \mu_y - u^{-1} w_x \mu_x\}
\end{align*}
and each $\tilde {\mathcal T}_p\subset G(L)/I$ is a locally closed $k$-subscheme of finite dimension
\begin{align*}
\dim \tilde {\mathcal T}_p = \frac 12\left(\ell(x)-\ell_u(y) + \ell(p)\right) - \codim(\mathcal T_p \cap (x^{-1}IxI)~\subseteq~ \mathcal T_p).
\end{align*}
Here, $\mathcal T_p\subset G(L)/I$ is the variety from Theorem~\ref{thm:semiInfiniteOrbitsViaPaths}. If $\mathcal T_p\cap x^{-1}IxI$ is dense in $\mathcal T_p$, then $\tilde {\mathcal T}_p$ is irreducible.
\end{theorem}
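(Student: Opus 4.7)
The strategy is to approximate the Iwahori double coset $IxI$ by the larger semi-infinite orbit $\prescript{v}{}{}U(L)xI$ with $v := w_x v_x w_0$, and then apply Theorem~\ref{thm:semiInfiniteOrbitsViaPaths}. The first task is to verify the inclusion $IxI \subseteq \prescript{v}{}{}U(L)xI$, which is precisely the content of $v_x \in \LP(x)$: factoring $I = (I \cap \prescript{v}{}{}U(L)) \cdot T(\mathcal{O}_L) \cdot (I \cap \prescript{vw_0}{}{}U(L))$, the length positivity of $v_x$ ensures $x^{-1}(I \cap \prescript{vw_0}{}{}U(L))x \subseteq I$, so the torus and opposite-unipotent factors are absorbed into the right-hand $I$ in $IxI$.

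With $v$ fixed, I apply Theorem~\ref{thm:semiInfiniteOrbitsViaPaths} using the given $u$. The compatibility $u^{-1}v = u^{-1}w_x v_x w_0 = \pi_{\succ n}$ holds by hypothesis, producing the decomposition
\begin{align*}
\bigl((\prescript{u}{}{}U(L) \cap \prescript{v}{}{}U(L))xI\bigr) \cap \prescript{uw_0}{}{}U(L)yI = \bigsqcup_{p \in P} xT_p,
\end{align*}
with each $xT_p$ irreducible, smooth and affine of dimension $\tfrac{1}{2}(\ell_u(x) - \ell_u(y) + \ell(p))$. The next step is to show that $(IxI) \cap \prescript{uw_0}{}{}U(L)yI$ is already contained in the LHS of this display. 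The first paragraph places it in $\prescript{v}{}{}U(L)xI \cap \prescript{uw_0}{}{}U(L)yI$, and to cut it further down into $\prescript{u}{}{}U(L)xI$, I would decompose $\prescript{v}{}{}U(L) = (\prescript{u}{}{}U(L) \cap \prescript{v}{}{}U(L)) \cdot (\prescript{uw_0}{}{}U(L) \cap \prescript{v}{}{}U(L))$ along the reflection order via Lemma~\ref{lem:reflectionOrderProperties}(a), push the opposite-unipotent factor past $x$ modulo $I$ via Lemma~\ref{lem:semiInfiniteConjugation}, and use that the constraint of landing in $\prescript{uw_0}{}{}U(L)yI$ matches up with a specific $xT_p$ through Proposition~\ref{prop:semiInfiniteTypes} and Lemma~\ref{lem:typesAndPaths}.

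Once both inclusions are established, $\tilde T_p := xT_p \cap (IxI/I)$ is locally closed in $xT_p$, and the disjoint union over $p \in P$ gives the desired decomposition. The dimension of $\tilde T_p$ equals $\dim xT_p$ minus the codimension of the Iwahori condition; under translation by $x^{-1}$ (an isomorphism of the affine flag variety) this codimension is identified with $\codim(T_p \cap x^{-1}IxI \subseteq T_p)$, and a short rewriting converting $\ell_u(x)$ to $\ell(x)$ by absorbing the difference into the codimension term yields the stated formula. When $T_p \cap x^{-1}IxI$ is dense in $T_p$, the set $\tilde T_p$ is a nonempty open subscheme of the irreducible $xT_p$ and is therefore irreducible.

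The main technical obstacle is the containment in the second paragraph: the step of pushing the $\prescript{uw_0}$-factor through $x$ via Lemma~\ref{lem:semiInfiniteConjugation} must be tracked jointly with the constraint of lying in $\prescript{uw_0}{}{}U(L)yI$, in order to confirm that the result lands in a $T_p$ with exactly the weight and length data parametrizing $P$. The length-positivity of $v_x$ is the crucial input controlling the affine-root valuations so that the Iwahori condition propagates correctly through the conjugation by $x$.
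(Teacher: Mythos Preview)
Your definition $\tilde T_p := xT_p \cap (IxI/I)$ does not give a decomposition of $(IxI/I)\cap \prescript{uw_0}{}{}U(L)yI$: the containment you assert in the second paragraph,
\[
(IxI/I)\cap \prescript{uw_0}{}{}U(L)yI \;\subseteq\; (\prescript{u}{}{}U(L)\cap\prescript{v}{}{}U(L))xI,
\]
is false in general. Write $g\in I(x)$ as $g=g_1g_2$ with $g_1\in I(x)\cap\prescript{uw_0}{}{}U(L)$ and $g_2\in I(x)\cap\prescript{u}{}{}U(L)$. The condition $gxI\in\prescript{uw_0}{}{}U(L)yI$ depends only on $g_2$ (left multiplication by $g_1\in\prescript{uw_0}{}{}U(L)$ preserves that orbit), so $g_1$ is unconstrained. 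Your proposed remedy of ``pushing the opposite-unipotent factor past $x$ modulo $I$'' cannot work here: by the very definition of $I(x)$, a nontrivial $g_1\in I(x)$ satisfies $x^{-1}g_1x\notin I$, so it is not absorbed on the right; nor is it absorbed on the left, because Theorem~\ref{thm:semiInfiniteOrbitsViaPaths} already accounts for everything in $\prescript{u}{}{}U(L)\cap\prescript{v}{}{}U(L)$ and $g_1$ lies in the complementary factor $\prescript{uw_0}{}{}U(L)\cap\prescript{v}{}{}U(L)$.

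The paper's argument embraces this extra factor rather than trying to eliminate it: the product decomposition
\[
(IxI\cap\prescript{uw_0}{}{}U(L)yI)/I \;\cong\; \bigl(I(x)\cap\prescript{uw_0}{}{}U(L)\bigr)\;\times\;\Bigl((I(x)\cap\prescript{u}{}{}U(L))xI\cap\prescript{uw_0}{}{}U(L)yI\Bigr)/I
\]
shows that each genuine $\tilde T_p$ is the product of an affine space of dimension $\tfrac12(\ell(x)-\ell_u(x))$ with $xT_p\cap IxI$. That affine-space contribution is exactly the discrepancy $\tfrac12(\ell(x)-\ell_u(x))$ between your formula $\tfrac12(\ell_u(x)-\ell_u(y)+\ell(p))-\codim$ and the stated one; you cannot ``absorb'' it into the codimension term, since $\codim(T_p\cap x^{-1}IxI\subseteq T_p)$ is a fixed geometric quantity, not a bookkeeping device. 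Likewise, irreducibility of $\tilde T_p$ in the dense case follows because it is a product of an affine space with an open subset of the irreducible $xT_p$, not merely an open subset of $xT_p$.
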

\begin{proof}
We may write $IxI/I = I(x)xI/I\cong I(x)$ where $I(x)$ is the finite dimensional $k$-group
\begin{align*}
I(x) = \prod_a U_a,
\end{align*}
with the product taken over all positive affine roots $a\in\Phi_\af^+$ such that $x^{-1}a\in \Phi_\af^-$. If $a = (\alpha,m)$ is such a root, recall that the root subgroup $U_a$ is defined as the subvariety $U_a = \{U_\alpha(h t^m)\mid h\in k\}$. The length positivity of $v_x$ implies $(w_x v_x w_0)^{-1}\alpha\in \Phi^+$. Thus we can rewrite the condition $v_x\in \LP(x)$ as $I(x)\subset \prescript{w_x v_x w_0}{}{}U(L)$, or
\begin{align*}
IxI = (I\cap \prescript{w_x v_x w_0}{}U(L))xI.
\end{align*}
Observe that we have an isomorphism of $k$-ind-schemes
\begin{align}
&\Bigl(\prescript{w_x v_x w_0}{}{}U(L)\cap \prescript u{}{}U(L)\Bigr)\times \Bigl(\prescript{w_x v_x w_0}{}{}U(L)\cap \prescript {uw_0}{}{}U(L)\Bigr)
\to \prescript{w_x v_x w_0}{}{}U(L),\notag
\\&(g_1,g_1)\mapsto g_1 g_2.\label{eq:Udecomp1}
\end{align}
Such an isomorphism may e.g.\ be obtained by decomposing $U(L)$ into a suitable product of root subgroups $U_\alpha(L)$ as discussed in the beginning of Section~\ref{sec:semiInfiniteOrbits}.

Using \eqref{eq:Udecomp1}, we can write each element $g\in I(x)$ uniquely in the form $g = g_1 g_2$ with $g_1\in I(x)\cap \prescript {uw_0}{}{}U(L)$ and $g_2\in I(x)\cap \prescript{u}{}{}U(L)$. Then $gxI\in \prescript {uw_0}{}{}U(L)yI$ holds if and only if $g_2xI\in\prescript {uw_0}{}{}U(L)yI$. Hence the decomposition \eqref{eq:Udecomp1} yields an isomorphism
\begin{align}
&(IxI\cap \prescript {uw_0}{}{}U(L)yI)/I = (I(x)xI\cap \prescript {uw_0}{}{}U(L)yI)/I\notag
\\\cong& \Bigl((I(x)\cap\prescript {uw_0}{}{}U(L))xI/I\Bigr) \times \Bigl((I(x)\cap \prescript {u}{}{}U(L))xI \cap \prescript {uw_0}{}{}U(L)yI\Bigr)/I.\label{eq:Udecomp2}
\end{align}
The first variety $(I(x)\cap \prescript{uw_0}{}{}U(L))xI/I \cong I(x)\cap \prescript{uw_0}{}{}U(L)$ is just an affine space over $k$ whose dimension is given by the number of positive affine roots $a = (\alpha,m)$ with $x^{-1}a\in\Phi_\af^-$ and $(uw_0)^{-1}\alpha\in \Phi^+$. By \cite[Lemma~2.9]{Schremmer2022_newton}, we can express this quantity as
\begin{align*}
S_1 := \sum_{\alpha\in \Phi^-} \max(0,\ell(x^{-1},u\alpha)).
\end{align*}
Let us moreover define
\begin{align*}
S_2 := \sum_{\alpha\in \Phi^-} \min(0,\ell(x^{-1},u\alpha)).
\end{align*}
Then $-S_1 - S_2$ is simply the sum over all $\ell(x^{-1},-u\alpha)$ for $\alpha\in \Phi^+$, which we denoted by $\ell_u(x)$. Conversely, $S_1-S_2$ is the sum over all $\abs{\ell(x^{-1},u\alpha)}$, which equals $\ell(x)$ by \cite[Corollary~2.10, Lemma~2.6]{Schremmer2022_newton}. We conclude
\begin{align*}
\dim I(x)\cap \prescript u{}{}U(L) = S_1 = \frac 12\left(\ell(x) - \ell_u(x)\right).
\end{align*}
It remains to study the second factor in \eqref{eq:Udecomp2}. Following Theorem~\ref{thm:semiInfiniteOrbitsViaPaths}, we may decompose
\begin{align*}
\Bigl[\prescript{w_x v_x w_0}{}{}U(L)\cap \prescript{u}{}{}U(L)\Bigr]xI= \bigsqcup_p x\mathcal T_p,
\end{align*}
with the union taken over all paths $p\in \paths^{\prec}_{\preceq n}(u'\Rightarrow w_x^{-1} u)$ and all $u'\in W$.
Using Theorem~\ref{thm:semiInfiniteOrbitsViaPaths}, we conclude that $x \mathcal T_p \cap \prescript {uw_0}{}{}U(L)yI$ is empty if $p\notin P$ and equal to $x\mathcal T_p$ if $p\in P$. Hence the second factor in \eqref{eq:Udecomp2} decomposes as
\begin{align*}
\Bigl((I\cap \prescript{w_x v_x w_0}{}{}U(L)\cap \prescript u{}{}U(L))xI\cap \prescript {uw_0}{}{}U(L) yI\Bigr)/I = \bigsqcup_{p\in P}\Bigl( x\mathcal T_p\cap IxI/I\Bigr).
\end{align*}
We have
\begin{align*}
\dim \Bigl( x\mathcal T_p \cap IxI/I\Bigr) = \dim \mathcal T_p - \codim(\mathcal T_p\cap x^{-1} IxI/I~\subseteq~\mathcal  T_p).
\end{align*}
So the piece $\tilde{\mathcal  T}_p$ corresponding to $I(x)\cap \prescript{uw_0}{}{}U(L)$ and $x\mathcal T_p \cap IxI$ under \eqref{eq:Udecomp2} has dimension
\begin{align*}
\frac 12\left(\ell(x)-\ell_u(x)\right) + \frac 12\left(\ell_u(x)-\ell_u(y)+\ell(p)\right) - \codim(\mathcal T_p\cap x^{-1} IxI/I~\subseteq~ \mathcal T_p).
\end{align*}
Cancelling the common term $\ell_u(x)$, we get the claimed formula.
If $\mathcal T_p\cap x^{-1}IxI$ is dense in $\mathcal T_p$, then $\mathcal T_p\cap x^{-1} IxI$ is irreducible itself. Thus $\tilde {\mathcal T}_p$ is isomorphic to the direct product of the affine space $I(x)\cap \prescript{uw_0}{}{}U(L)$ and the irreducible variety $\mathcal T_p\cap x^{-1} IxI$, hence irreducible itself.
\end{proof}
We are especially interested in those situations where all $p\in P$ satisfy the property $\mathcal T_p\subseteq x^{-1} IxI$. This is not guaranteed at all, as one may choose $p$ and $x$ independently to obtain examples where the inclusion is far from being satisfied. Nonetheless, we can develop some regularity conditions imposed on $(x,y)$ that guarantee this inclusion.
\begin{lemma}\label{lem:semiInfiniteRegularity}
Let $\Phi^+ = \{\beta_1\prec\cdots\prec\beta_{\#\Phi^+}\}$ be a reflection order, $u,v\in W$ and $p\in\paths^\prec(u\Rightarrow v)$. Pick $gI\in \mathcal T_p$ and write it as
\begin{align*}
gI = U_{v\beta_1}(g_1)\cdots U_{v\beta_{\#\Phi^+}}(g_{\#\Phi^+})I\in\mathcal  T_p.
\end{align*}
Then for $m=1,\dotsc,\#\Phi^+$, we have
\begin{align*}
\nu_L(g_m) \geq -3\langle \rho^\vee,\beta_m\rangle \langle \wt(p),\rho\rangle,
\end{align*}
where $\rho$ is the half-sum of positive roots and $\rho^\vee$ the half-sum of positive coroots.
\end{lemma}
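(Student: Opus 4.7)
The plan is to proceed by descending induction on $m$, from $m = \#\Phi^+$ down to $m = 1$. Applying Proposition~\ref{prop:semiInfiniteTypes} with its role of $u$ played by our $v$, I associate to $gI \in T_p$ the admissible type $\tau = \{(n_1,\nu_1),\dotsc,(n_N,\nu_N)\}$ corresponding to $p$ via Lemma~\ref{lem:typesAndPaths}. This yields the polynomial relation $g_m = g^{(m)}_m - f^{(m)}_m(g_{m+1},\dotsc,g_{\#\Phi^+})$, and I would bound $\nu_L(g_m)$ by controlling both summands separately.

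First I would bound the leading term $\nu_L(g^{(m)}_m)$. If $m = n_h$, Proposition~\ref{prop:semiInfiniteTypes}(b) yields $\nu_L(g^{(m)}_m) = \nu_h$, and it remains to bound $|\nu_h|$ in terms of $\langle \wt(p),\rho\rangle$. I would extract this from the identity $(\beta_h', m_h') = r_{b_N}\cdots r_{b_h}(b_h)$ of Lemma~\ref{lem:typesAndPaths}: since each affine reflection $r_{b_j}$ acts as $s_{v\beta_{n_j}} t^{\nu_j v\beta_{n_j}^\vee}$, telescoping expresses each $\nu_h$ as an integer linear combination of the $m_j'$ with coefficients of the form $\langle \beta^\vee,\gamma\rangle$ for $\beta,\gamma \in \Phi^+$; these are universally bounded (at most $3$ in the worst case), giving $|\nu_h| \leq C\langle \wt(p),\rho\rangle$ with $C$ depending only on the root system. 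If $m \notin \{n_1,\dotsc,n_N\}$, then Proposition~\ref{prop:semiInfiniteTypes}(b) forces either $g^{(m)}_m = 0$ or $x_m^{-1}(v\beta_m,\nu_L(g^{(m)}_m)) \in \Phi_\af^+$, which yields a comparable lower bound by the same kind of analysis applied to the translation part of $x_m$.

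Next I would bound the correction term. Each monomial of $f^{(m)}_m$ has the shape $\varphi X_{m+1}^{e_{m+1}}\cdots X_{\#\Phi^+}^{e_{\#\Phi^+}}$ subject to $\beta_m = \sum_{h>m} e_h \beta_h$, with the crucial constraint that $e_h < 0$ forces $h \in \{n_1,\dotsc,n_N\}$. For indices with $e_h > 0$, the inductive hypothesis gives $e_h \nu_L(g_h) \geq -3 e_h \langle \rho^\vee,\beta_h\rangle \langle \wt(p),\rho\rangle$; for indices $h = n_j$ with $e_h < 0$, I use $\nu_L(g_h) = \nu_j$ combined with the previous bound on $|\nu_j|$. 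Pairing $\beta_m = \sum_h e_h \beta_h$ with $\rho^\vee$ gives the key identity $\sum_{e_h>0} e_h \langle \rho^\vee,\beta_h\rangle = \langle \rho^\vee,\beta_m\rangle + \sum_{e_h<0} |e_h|\langle \rho^\vee,\beta_h\rangle$, and this is exactly what allows the positive and negative contributions to telescope into the promised $-3\langle \rho^\vee,\beta_m\rangle \langle \wt(p),\rho\rangle$.

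The main obstacle I expect is confirming that the constant $3$ suffices uniformly. This requires tight accounting of two sources of slack: the reflection-identity bound on $|\nu_j|$ and the root-system estimate $\langle \beta^\vee,\gamma\rangle \leq 3$ for positive roots $\beta,\gamma$. A minor auxiliary point is verifying that the bound is insensitive to the choice of representative $(g_1,\dotsc,g_{\#\Phi^+})$ of $gI$: by Corollary~\ref{cor:semiInfiniteTrivialization}, two representatives differ only by an element of $\prescript v{}U(L)\cap I$ whose root-subgroup components all have non-negative valuation, so such modifications can only \emph{increase} each $\nu_L(g_m)$ and cannot violate the stated lower bound.
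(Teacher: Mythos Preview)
Your overall strategy is the same as the paper's: bound the leading term $g^{(m)}_m$ using the type/path data, then bound the polynomial correction via the root-linear identity $\beta_m = \sum_h e_h \beta_h$ paired with $\rho^\vee$. But there is a genuine gap in your treatment of the negative-exponent factors.

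You write that for $h = n_j$ with $e_h < 0$ you will use $\nu_L(g_h) = \nu_j$. This identity is false in general: what Proposition~\ref{prop:semiInfiniteTypes} gives is $\nu_L\bigl(g^{(n_j)}_{n_j}\bigr) = \nu_j$, i.e.\ the valuation of the \emph{corrected} coordinate $g'_{n_j}$, not of the original coordinate $g_{n_j} = g'_{n_j} - f^{(n_j)}_{n_j}(g_{n_j+1},\dotsc,g_{\#\Phi^+})$. Since the monomials of $f^{(m)}_m$ are in the original $g_h$'s, your inductive step needs an \emph{upper} bound on $\nu_L(g_h)$ for those $h$ with $e_h<0$, and your descending induction only supplies a lower bound. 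Cancellation between $g'_h$ and the correction term can make $\nu_L(g_h)$ arbitrarily large, so there is no easy rescue.

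The paper sidesteps this by organising the induction differently. Instead of bounding $\nu_L(g_m)$ directly, it bounds $\nu_L\bigl(g^{(m)}_i - g'_i\bigr)$ by induction on $m-i$, using the one-step transition of Lemma~\ref{lem:semiInfiniteConjugation}: when passing from $g^{(m)}_\bullet$ to $g^{(m-1)}_\bullet$, the only variable appearing with negative exponent is $g^{(m)}_m = g'_m$, whose valuation is exactly $\nu_h$ (when $m=n_h$) and for which one has both the lower bound $\nu_L(g'_m)\geq -3\langle \wt(p),\rho\rangle$ and a matching upper bound. Setting $m=\#\Phi^+$ at the end recovers the original $g_i$ since $g^{(\#\Phi^+)}_i = g_i$. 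Your argument can be repaired by switching to this two-index induction; as written, the step for negative exponents does not go through.
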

\begin{proof}
Write our path as
\begin{align*}
p : u = u_1\xrightarrow{(\alpha_1,m_1)}\cdots\xrightarrow{(\alpha_{\ell(p)},m_{\ell(p)})} u_{\ell(p)+1} = v.
\end{align*}
Denote the type corresponding to $p$ under Lemma~\ref{lem:typesAndPaths} by $\tau = \{(n_1,\nu_1),\dotsc,(n_{\ell(p)},\nu_{\ell(p)})\}$. This means $\alpha_i=\beta_{n_i}$ and
\begin{align*}
(s_{\alpha_{\ell(p)}}\cdots s_{\alpha_{i+1}}(\alpha_i),m_i) = r_{(\alpha_{\ell(p)},\nu_{\ell(p)})}\cdots r_{(\alpha_{i+1},\nu_{i+1})}(\alpha_i,\nu_i)\in\Phi_\af.
\end{align*}
We also write $\alpha' := s_{\alpha_{\ell(p)}}\cdots s_{\alpha_{i+1}}(\alpha_i)\in\Phi$.

Let $f^{(\bullet)}_\bullet$ be the polynomials used in the definition of $\mathcal T_p = \mathcal T_{u,\prec,\tau}$, i.e.\ the polynomials from Proposition~\ref{prop:semiInfiniteTypes}.
We write
\begin{align*}
g^{(m)}_i =&g_i + f^{(m)}_i(g_{i+1},\dotsc,g_{\#\Phi^+}),
\\g'_m &= g^{(m)}_m.
\end{align*}

Let $m\in\{1,\dotsc,\#\Phi^+\}$ and let $h = h(m)\in\{0,\dotsc,\ell(p)\}$ be maximal such that $\alpha_{h'}\prec \beta_m$ for all $h'\leq h$. Then the condition $g\in\mathcal  T_p$ implies, by definition, that $g_m'=0$ or
\begin{align*}
r_{(u\alpha_{\ell(p)},\nu_{\ell(p)})}\cdots r_{(u\alpha_{h+1},\nu_{h+1})} (u\beta_m,\nu_L(g'_m))\in \Phi_\af^+.
\end{align*}
Observe that we can write
\begin{align*}
&r_{(u\alpha_{\ell(p)},\nu_{\ell(p)})}\cdots r_{(u\alpha_{h+1},\nu_{h+1})} (u\beta_m,\nu_L(g'_m))
\\=\,&r_{(u\alpha'_{h+1},m_{h+1})}\cdots r_{(u\alpha'_{\ell(p)},m_{\ell(p)})}(u\beta_m,\nu_L(g'_m))
\\=\,&(v\beta_m,\nu_L(g'_m)) + \sum_{i=h+1}^{\ell(p)} c_i (u\alpha_i',m_i)
\end{align*}
for elements $c_i\in\{0,\pm 1,\pm 2,\pm 3\}$ describing the pairings between the occurring roots. Thus
\begin{align*}
\nu_L(g_m')\geq -3\sum_{i=h+1}^{\ell(p)} m_i \geq -3\sum_{i=h+1}^{\ell(p)}m_i\langle \alpha_i^\vee,\rho\rangle \geq -3\langle \wt(p),\rho\rangle.
\end{align*}

We claim for all $m\in\{1,\dotsc,\#\Phi^+\}$ and $i\in\{1,\dotsc,n\}$ that
\begin{align*}
\nu_L(g^{(m)}_i - g_i')\geq -3\langle \rho^\vee,\beta_i\rangle\langle \wt(p),\rho\rangle.
\end{align*}
Induction on $m-i$. For $m=i$, we have $g^{(m)}_i = g'_i$ by definition, so there is nothing to prove.

So let now $i<m$ and suppose the claim has been proved for all pairs of smaller difference.

If $\beta_{m}\notin\{\alpha_1,\dotsc,\alpha_{\ell(p)}\}$, then applying Lemma~\ref{lem:semiInfiniteConjugation} (which is how the polynomials $f^{(\bullet)}_\bullet$ were constructed) yields $g^{(m)}_i = g^{(m-1)}_i$, so we are done by induction immediately.

So suppose now that $\beta_{m}\in\{\alpha_1,\dotsc,\alpha_{\ell(p)}\}$. Applying Lemma~\ref{lem:semiInfiniteConjugation}, we see that $g^{(m-1)}_i$ has the form
\begin{align*}
g^{(m-1)}_i = g^{(m)}_i + \sum c_{e_{i+1},\dotsc,e_{m}} (g^{(m)}_{i+1})^{e_{i+1}}\cdots (g^{(m)}_{m})^{e_{m}},
\end{align*}
with the sum taken over all possible integers $e_{i+1},\dotsc,e_{m-1}\geq 0, e_{m}<0$ such that $\beta_i = e_{i+1}\beta_{i+1}+\cdots+e_{m} \beta_{m}$ and structure constants $c_{e_{i+1},\dotsc,e_{m}}\in\mathbb Z$. By the inductive assumption and the above estimate on $\nu_L(g_\bullet')$, we see
\begin{align*}
\nu_L\Bigl[(g^{(m)}_{i+1})^{e_{i+1}}\cdots (g^{(m)}_{n-1})^{e_{n-1}}\Bigr] \geq -3\langle \wt(p),\rho\rangle\langle \rho^\vee,e_{i+1}\beta_{i+1}+\cdots+e_{i_{m-1}} \beta_{m-1}\rangle.
\end{align*}
An entirely similar argument to the one presented above shows moreover $\nu_L(g_m') \leq 3\langle \wt(p),2\rho\rangle$, so that
\begin{align*}
\nu_L((g_m')^{e_m}) \geq -3 e_m \langle \wt(p),\rho\rangle\langle \rho^\vee,\beta_m\rangle.
\end{align*}
We conclude
\begin{align*}
\nu_L\Bigl[c_{e_{i+1},\dotsc,e_{m}} (g^{(m)}_{i+1})^{e_{i+1}}\cdots (g^{(m)}_{m})^{e_{m}}\Bigr] &\geq -3\langle \wt(p),\rho\rangle \langle \rho^\vee,e_{i+1}\beta_{i+1}+\cdots + e_m\beta_m\rangle 
\\&=-3\langle \wt(p),\rho\rangle \langle \rho^\vee,\beta_i\rangle.
\end{align*}
Hence
\begin{align*}
\nu_L(g^{(m)}_i - g^{(m-1)}_i) \geq -3\langle \rho^\vee,\beta_i\rangle \langle \wt(p),\rho\rangle.
\end{align*}
This finishes the induction.

In particular, we see
\begin{align*}
\nu_L(g_m)\geq -3\langle \rho^\vee,\beta_m\rangle \langle \wt(p),\rho\rangle
\end{align*}
for all $m$.
This finishes the proof.
\end{proof}
We finally define the class of elements in $\widetilde W$ where Theorem~\ref{thm:generalizedMV} and Lemma~\ref{lem:semiInfiniteRegularity} describe affine Deligne--Lusztig varieties fully.
\begin{definition}\label{def:superparabolic}
Let $x=wt^\mu\in\widetilde W, J\subseteq\Delta$ and $C\in\mathbb R_{>0}$. We say that $x$ is \emph{$(J,C)$-superparabolic} if there exists $v\in W$ such that
\begin{enumerate}[(a)]
\item all $\alpha\in\Phi_J$ satisfy $\ell(x,v\alpha)=0$ and
\item all $\alpha\in\Phi^+\setminus\Phi_J$ and $v'\in vW_J$ satisfy \begin{align*}\langle \mu,v'\alpha\rangle> C\langle \rho^\vee,\alpha\rangle
\end{align*}
\end{enumerate}
\end{definition}
This is a generalization of the $J$-adjusted and $J$-superdominant elements from \cite{Lenart2015}. If $x$ is $(J,2)$-superparabolic and $v$ as in the above definition, then one easily checks $\LP(x) = vW_J$. We can interpret condition (b) above as a regularity condition of the length functional, in particular
\begin{align*}
&\Bigl(\forall \alpha\in \Phi^+\setminus \Phi_J:~\ell(x,v\alpha) > 1+C\langle \rho^\vee,2\rho\rangle\Bigr)
\\\implies&\text{condition (b) of Definition~\ref{def:superparabolic}}
\\\implies&\Bigl(\forall \alpha\in\Phi^+\setminus\Phi_J:~\ell(x,v\alpha)>C-1\Bigr).
\end{align*}
\begin{theorem}\label{thm:adlvViaSemiInfiniteOrbits}
Let $x=wt^\mu\in\widetilde W$ and $b = t^{\nu(b)}$ be an element with integral dominant Newton point $\nu(b)\in X_\ast(T)^{\dom}$. Define for each $v\in \LP(x)$ and $u\in W$ the multiset
\begin{align*}
E(u,v) := \{e \mid (u^{-1} \mu-\nu(b),e)\in \wts(u\Rightarrow wu\dashrightarrow wv)\}_m.
\end{align*}
Put $\max\emptyset :=-\infty$ and define
\begin{align*}
e :&= \max_{u\in W} \min_{v\in \LP(x)} \max(E(u,v))\in\mathbb Z\cup \{-\infty\}.\\
d :&= \frac 12\left(\ell(x)+e-\langle \nu(b),2\rho\rangle\right)\in\mathbb Z\cup\{-\infty\}.
\end{align*}
\begin{enumerate}[(a)]
\item If there exists for every $u\in W$ some $v\in \LP(x)$ with $E(u,v)=\emptyset$, i.e.\ if $e=d=-\infty$, then $X_x(b)=\emptyset$.
\item If $X_x(b)\neq\emptyset$, then $\dim X_x(b)\leq d$.
\item 
Write $C = 3\langle \mu^\dom-\nu(b),\rho\rangle$ and suppose that $x$ is $(J,C)$-superparabolic for some $J\subseteq \Delta$. Define the multiset $E$ as the additive union
\begin{align*}
E = \bigcup_{v\in\LP(x)} E(vw_0(J),v).
\end{align*}
Then $X_x(b)\neq \emptyset$ if and only if $E\neq \emptyset$. In this case, $e=\max(E)$ and $\dim X_x(b)=d$.
\item Assume $X_x(b)\neq\emptyset$ and let $\Sigma_d$ be the set of $d$-dimensional irreducible components of $X_x(b)$.

Then the number of $J_b(F)$-orbits in $\Sigma_d$ is
\begin{align*}
\#(\Sigma_d/J_b(F))\leq \sum_{u\in W} \min_{v\in W}\left(\text{multiplicity of $e$ in $E(u,v)$}\right).
\end{align*}
If we are in the situation of (c) and $b$ is regular, i.e.\ $\langle \nu(b),\alpha\rangle\neq 0$ for all $\alpha\in\Phi$, then $\#(\Sigma_d/J_b(F))$ is equal to the multiplicity of $e$ in $E$.
\end{enumerate}
\end{theorem}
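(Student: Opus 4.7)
\emph{Step 1: GHKR reduction.} Each $z\in\widetilde W$ factors as $z=u_0 t^{\mu_z}$ with $u_0\in W$, and $zbz^{-1}=t^{u_0\nu(b)}$ depends only on $u_0$. Thus Theorem~\ref{thm:ghkr} yields
\[
\dim X_x(b)=\max_{u_0\in W}\dim\bigl(IxI\,\cap\,\prescript{u_0}{}U(L)\,t^{u_0\nu(b)}I\bigr),
\]
and an analogous statement for the number of $J_b(F)$-orbits of top-dimensional irreducible components.

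\emph{Step 2: decomposition and dimension formula.} Fix $u_0\in W$ and $v_x\in\LP(x)$. Setting $u:=u_0w_0$ and $y:=t^{u_0\nu(b)}$ in Theorem~\ref{thm:generalizedMV}, the closed formula for $\ell_u$ evaluates to $\ell_{u_0w_0}(t^{u_0\nu(b)})=\langle\nu(b),2\rho\rangle$ (using $-w_0\cdot 2\rho=2\rho$), so the decomposition produces pieces $\tilde T_p$ indexed by certain paths $p$ with
\[
\dim\tilde T_p=\tfrac{1}{2}\bigl(\ell(x)+\ell(p)-\langle\nu(b),2\rho\rangle\bigr)-\codim\bigl(T_p\cap x^{-1}IxI,\,T_p\bigr).
\]
Composing the symmetries of Lemma~\ref{lem:pathSymmetries} (reversal of $\prec$ together with $-w_0$-twist) produces a length- and multiplicity-preserving bijection between this indexing set and the paths counted by $E(u_0,v_x)$; in particular $\max_p\ell(p)=\max E(u_0,v_x)$.

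\emph{Step 3: upper bounds (a), (b) and the inequality in (d).} The codimension correction is non-negative, so dropping it and maximising over $u_0$ already yields $\dim X_x(b)\leq d$; each $v_x\in\LP(x)$ gives an independent upper bound, so we may minimise over $v_x$. If every $u_0$ admits some $v_x$ with $E(u_0,v_x)=\emptyset$, the corresponding intersection is empty for every $u_0$, hence $X_x(b)=\emptyset$, proving (a). Counting the top-dimensional $\tilde T_p$ and invoking the orbit-counting half of Theorem~\ref{thm:ghkr} gives the upper bound in (d).

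\emph{Step 4: sharpness via superparabolicity.} Assume $x$ is $(J,C)$-superparabolic with $C=3\langle\mu^{\dom}-\nu(b),\rho\rangle$, fix $v\in\LP(x)$, and take $u_0=vw_0(J)$ for $w_0(J)$ the longest element of $W_J$. Every relevant path has weight $\wt(p)=u_0^{-1}\mu-\nu(b)$, whose pairing with $\rho$ is bounded by $\langle\mu^{\dom}-\nu(b),\rho\rangle$, so Lemma~\ref{lem:semiInfiniteRegularity} gives $\nu_L(g_m)\geq -3\langle\rho^\vee,\beta_m\rangle\langle\wt(p),\rho\rangle$ on $T_p$. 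Matching this root-by-root against the root-subgroup definition of $x^{-1}IxI$ and exploiting $\LP(x)=vW_J$ (so length-positive roots of $v$ lie in $\Phi_J$, where Lemma~\ref{lem:semiInfiniteRegularity} imposes nothing, while condition (b) of $(J,C)$-superparabolicity swallows the valuation bound on $\Phi^+\setminus\Phi_J$), one concludes $T_p\subseteq x^{-1}IxI$. Hence the codimension correction vanishes, each $\tilde T_p$ is irreducible of the claimed dimension, and the additive union over $v\in\LP(x)$ recovers the multiset $E$. When $\nu(b)$ is regular, $J_b(F)$ equals the split torus $T(F)$, so distinct top-dimensional $\tilde T_p$ lie in distinct $J_b(F)$-orbits and the bound in (d) becomes an equality. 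The principal technical difficulties will be (i) carrying out the path-bookkeeping bijection with $E(u,v)$ in Step~2, and (ii) the root-by-root verification that $T_p\subseteq x^{-1}IxI$ in Step~4.
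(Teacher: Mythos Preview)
Your approach is essentially the paper's: GHKR reduction, Theorem~\ref{thm:generalizedMV}, the path symmetries of Lemma~\ref{lem:pathSymmetries}, and then Lemma~\ref{lem:semiInfiniteRegularity} together with the superparabolicity hypothesis to kill the codimension correction. Two points deserve attention.

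First, the bookkeeping in Step~2 is slightly off. With $y=t^{u_0\nu(b)}$ and $u=u_0w_0$ in Theorem~\ref{thm:generalizedMV}, the path set lives in $\paths^\prec_{\preceq n}(u_0w_0\Rightarrow w^{-1}u_0w_0)$, and after applying Lemma~\ref{lem:pathSymmetries}~(a) and~(b) one lands in $E(w^{-1}u_0,v_x)$, not $E(u_0,v_x)$. This is only a reindexing $u_0\leftrightarrow w^{-1}u_0$ and does not affect (a), (b), or the inequality in (d); but it means that in Step~4 the correct choice of semi-infinite direction is $u_0=w\,v\,w_0(J)$ (so that $w^{-1}u_0=vw_0(J)$), not $u_0=vw_0(J)$.

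Second, and more substantively, Step~4 as written only establishes a lower bound: for the chosen directions $u_0$ the codimension term vanishes and one gets $\dim X_x(b)\geq\tfrac12(\ell(x)+\max E-\langle\nu(b),2\rho\rangle)$. Together with (b) this gives $\max E\leq e$, but \emph{not} the reverse inequality, nor the ``only if'' direction of $X_x(b)\neq\emptyset\iff E\neq\emptyset$, nor the exact count in (d). The missing ingredient is the following: under the superparabolicity hypothesis, for any $u'\notin\LP(x)$ one has $u'^{-1}\mu\not\geq\nu(b)$, hence the weight condition $\wt(p)=u'^{-1}\mu-\nu(b)$ has no solutions in $\mathbb Z_{\geq 0}\Phi^\vee$ and $E(u',v)=\emptyset$ for every $v$. (One picks $\alpha\in\Phi^+\setminus\Phi_J$ with $v_1^{-1}u'\alpha\in\Phi^-$ and uses condition~(b) of superparabolicity.) This forces $e=\max_{u'\in\LP(x)}\min_v\max E(u',v)$, and choosing $v=u'w_0(J)$ exhibits $E(u',u'w_0(J))$ as a summand of $E$, whence $e\leq\max E$. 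It also shows that \emph{all} nonempty intersections $IxI\cap\prescript{u_0}{}U(L)t^{u_0\nu(b)}I$ arise from the directions you already treat, which is what makes the component count in (d) exact.
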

\begin{proof}
We use Theorem~\ref{thm:ghkr} to reduce questions on the affine Deligne--Lusztig variety to the situation of Theorem~\ref{thm:generalizedMV}.

So let $z = ut^{\mu_z}\in\widetilde W$. Then $X_x(b)\cap U(L)z^{-1}I/I$ is closely related to the intersection
\begin{align*}
(IxI/I)\cap (\prescript u{}{}U(L)t^{-u\nu(b)}I/I).
\end{align*}
Pick $v\in \LP(x)$.
By Theorem~\ref{thm:generalizedMV}, the latter intersection can be decomposed into pieces $(\tilde {\mathcal T}_p)_{p\in P}$ with
\begin{align*}
P = \{p\in\paths^\prec_{\preceq n}(u w_0\Rightarrow w^{-1} uw_0)\mid \wt(p) = (uw_0)^{-1} (u\nu(b)) - (uw_0)^{-1} w\mu\}.
\end{align*}
Here, $\prec$ is a reflection order chosen such that $\pi_{\succ n} = w_0u^{-1} w vw_0$. The number of paths in $P$ having a given length $\ell\in\mathbb Z_{\geq 0}$ is equal to the multiplicity of
\begin{align*}
(w_0(\nu(b) - u^{-1} w\mu),\ell)
\end{align*}
in the multiset
\begin{align*}
\wts(u w_0\Rightarrow w^{-1} uw_0\dashrightarrow vw_0).
\end{align*}
By Lemma~\ref{lem:pathSymmetries} (a) and (b), this is also equal to the multiplicity of $(u^{-1} w \mu-\nu(b),\ell)$ in
\begin{align*}
&\wts(w_0 u\Rightarrow w_0w^{-1} u\dashrightarrow w_0 v)
\\=\,&\wts(w^{-1} u\Rightarrow u \dashrightarrow wv).
\end{align*}
By definition, this is the multiplicity of $\ell$ in the multiset $E(w^{-1}u,v)$.

We see that if $E(w^{-1} u,v)=\emptyset$ then $X_x(b)\cap U(L)z^{-1} I=\emptyset$. Otherwise,
\begin{align*}
\dim \Bigl(X_x(b)\cap U(L)z^{-1} I/I\Bigr) = \max_p \dim \tilde{\mathcal  T}_p \leq \frac 12\left(\ell(x)+\max(E(w^{-1} u,v))-\ell_{uw_0}(t^{u\nu(b)})\right).
\end{align*}
Observe $\ell_{uw_0}(t^{u\nu(b)}) = \langle \nu(b),2\rho\rangle$. This shows (a) and (b).

In particular, we have $\dim \tilde {\mathcal T}_p \leq d$ for all $p$. If equality holds, then $\tilde {\mathcal T}_p$ must be irreducible by Theorem~\ref{thm:generalizedMV}. Hence the number of $d$-dimensional irreducible components in $X_x(b)\cap U(L)z^{-1} I/I$ is equal to the number of pieces $\tilde {\mathcal T}_p$ satisfying $\dim \tilde {\mathcal T}_p=d$, which is at most the multiplicity of $e$ in $E(w^{-1} u,v)$. Observe that the action of $T(F)\subseteq J_B(F)$ simply permutes the intersections $X_x(b)\cap U(L)z^{-1} I/I$ by changing the value of $\mu_z$. Thus the number $\#(\Sigma_d/J_b(F))$ is at most equal to
\begin{align*}
\sum_{u\in W}(\text{number of $d$-dimensional irreducible components in }(X_x(b)\cap U(L)u^{-1}I)).
\end{align*}
We get the desired estimate in (d).

Let us now assume the regularity condition from (c). Let $v_1\in W$ be chosen such that $v_1^{-1}\mu$ is dominant. We have $\LP(x) = v_1 W_J$.

If $u\in W$ satisfies $w^{-1} u\notin \LP(x) = v_1 W_J$, we find a positive root $\alpha\in \Phi^+\setminus \Phi_J$ with $v_1^{-1} w^{-1} u\alpha\in \Phi^-$. Hence
\begin{align*}
\langle \mu,-w^{-1}u \alpha\rangle \geq C = 3\langle v_1^{-1}\mu-\nu(b),\rho\rangle.
\end{align*}
We conclude
\begin{align*}
w^{-1}u \mu \leq \mu^{\dom} - C\alpha^\vee\not\geq \nu(b).
\end{align*}
Hence $E(u,v)=\emptyset$ for all $v\in \LP(x)$, proving $IxI\cap \prescript u{}{}U(L)t^{-u\nu(b)}I=\emptyset$.

Let us now consider the case $w^{-1} u \in \LP(x)$. Then also $v := w^{-1} u w_0(J)\in \LP(x)$. Consider the pieces $\tilde {\mathcal T}_p$ as constructed above for this pair $(u,v)$, i.e.\ for paths $p$ from $uw_0$ to $w^{-1} uw_0$. We claim $\mathcal T_p\subseteq x^{-1} Ix I$ for all occurring paths $p$, using Lemma~\ref{lem:semiInfiniteRegularity}: Indeed if
\begin{align*}
gI = U_{w^{-1}uw_0\beta_1}(g_1)\cdots U_{w^{-1}uw_0\beta_{n}}(g_{n})I\in \mathcal T_p
\end{align*}
and $w_0 w_0(J) w_0 = \pi_{\succ\beta_n}$, then $-w_0\Phi^+_{J} = \{\beta_{n+1},\dotsc,\beta_{\#\Phi^+}\}$. The condition that $p$ is bounded above by $n$ yields
\begin{align*}
w^{-1} uw_0\beta_1,\dotsc, w^{-1} u w_0\beta_n \in w^{-1} u (\Phi^-\setminus \Phi_J).
\end{align*}
Since $w^{-1} u\in \LP(x)$, the superparabolicity condition implies $\langle \mu,w^{-1} u w_0\beta_i\rangle < -C\langle \rho^\vee,\beta_i\rangle$. By Lemma~\ref{lem:semiInfiniteRegularity}, we obtain $x U_{w^{-1} uw_0\beta_i}(g_i)x^{-1} \in I$. This shows the claim $\mathcal T_p\subseteq x^{-1}IxI$.

By Theorem~\ref{thm:generalizedMV}, we see that $\tilde {\mathcal T}_p$ is irreducible of dimension
\begin{align*}
\dim \tilde {\mathcal T}_p = \frac 12\left(\ell(x)+\ell(p)-\langle \nu(b),2\rho\rangle\right).
\end{align*}
This fully describes non-emptiness, dimension and top dimensional irreducible components of $X_x(b)\cap U(L)z^{-1} I/I$. So $X_x(b)\neq\emptyset$ if and only if $E\neq\emptyset$, and in this case
\begin{align*}
\dim X_x(b) = \frac 12\left(\ell(x)+\max(E)-\langle \nu(b),2\rho\rangle\right).
\end{align*}
We saw $E(u,v)=\emptyset$ whenever $u\notin \LP(x)$, so $e\leq \max(E)$ by definition of $e$. Conversely, we get $\max(E)\leq e$ from (b) and the above dimension calculation. Thus $\max(E)=e$. We conclude (c).

Assume now that $[b]$ is regular as in the final claim of (d). Then $J_b(F) = T(F)$, so the number of $J_b(F)$-orbits of $d$-dimensional irreducible components in $X_x(b)$ is equal to
\begin{align*}
\sum_{u\in W}(\text{number of $d$-dimensional irreducible components in }(X_x(b)\cap U(L)u^{-1}I)).
\end{align*}
Observe that each summand is equal to be the number of $d$-dimensional pieces $\tilde{\mathcal  T}_p$ corresponding to $u\in W$. By the above analysis using the superparabolicity assumption, this number is equal to the multiplicity of $e$ in $E(w^{-1} u, w^{-1} u w_0(J))$ (thus zero if $w^{-1} u\notin \LP(x)$). The final claim of (d) follows.
\end{proof}
\begin{remark}
If $x$ is not superparabolic, we do not expect that the converse of Theorem~\ref{thm:adlvViaSemiInfiniteOrbits} (a) holds in general. Even if $X_x(b)\neq\emptyset$, we do not expect that equality holds in (b) or (d) in general. It is easy to find counterexamples using a computer search.
\end{remark}
\begin{corollary}\label{cor:superregularADLV}
Let $x = wt^\mu\in \widetilde W$ and $[b]\in B(G)$. Let $v\in W$ such that $v^{-1}\mu$ is dominant. Put $C = 3\langle v^{-1}\mu-\nu(b),\rho\rangle$ and assume that
\begin{align*}
\langle v^{-1}\mu,\alpha\rangle \geq C
\end{align*}
for all simple roots $\alpha$. Define the multiset
\begin{align*}
E = \{e\in\mathbb Z\mid (v^{-1}\mu-\nu(b),e)\in \wts(v\Rightarrow wv)\}_m.
\end{align*}
Then $X_x(b)\neq\emptyset$ if and only if $E\neq\emptyset$. In this case, the dimension of $X_x(b)$ is
\begin{align*}
\dim X_x(b) = \frac 12\left(\ell(x)+\max(E)-\langle \nu(b),2\rho\rangle\right),
\end{align*}
and the number of $J_b(F)$-orbits of top dimensional irreducible components is equal to the multiplicity of $\max(E)$ in $E$.
\end{corollary}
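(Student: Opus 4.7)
The plan is to deduce the corollary directly from Theorem~\ref{thm:adlvViaSemiInfiniteOrbits} by specializing to $J=\emptyset$, after checking two preparatory facts: that $x$ is $(\emptyset,C)$-superparabolic, and that the multiset $E$ of the theorem agrees with the $E$ of the corollary.

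First I would verify $(\emptyset,C)$-superparabolicity. Condition (a) of Definition~\ref{def:superparabolic} is vacuous since $\Phi_\emptyset=\emptyset$. For condition (b), $W_\emptyset=\{1\}$ leaves only $v'=v$, so I need $\langle \mu, v\alpha\rangle \geq C\langle \rho^\vee,\alpha\rangle$ for all $\alpha\in\Phi^+$. Writing $\alpha=\sum_i c_i\alpha_i$ as a non-negative integer combination of simple roots and using that $\rho^\vee$ is the sum of fundamental coweights, so $\langle\rho^\vee,\alpha\rangle=\sum_i c_i=\rootHeight(\alpha)$, the hypothesis $\langle v^{-1}\mu,\alpha_i\rangle\geq C$ gives
\[
\langle v^{-1}\mu,\alpha\rangle=\sum_i c_i\langle v^{-1}\mu,\alpha_i\rangle\geq C\sum_i c_i=C\langle\rho^\vee,\alpha\rangle.
\]
The non-strict inequality is harmless (one may enlarge $C$ infinitesimally, or verify that the proof of Theorem~\ref{thm:adlvViaSemiInfiniteOrbits} only relies on non-strict bounds). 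Additionally, this superregularity forces $\LP(x)=\{v\}$.

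Next I would match the multisets. With $J=\emptyset$ we have $w_0(J)=1$, hence $vw_0(J)=v$, so the additive union $\bigcup_{v\in\LP(x)}E(vw_0(J),v)$ collapses to $E(v,v)$. By Definition~\ref{def:weightMultiset} the trailing dashed arrow $wv\dashrightarrow wv$ is trivial and $\wts(v\Rightarrow wv\dashrightarrow wv)=\wts(v\Rightarrow wv)$, so this coincides with the $E$ of the corollary.

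With these identifications, non-emptiness and the dimension formula are immediate from Theorem~\ref{thm:adlvViaSemiInfiniteOrbits}(c). For the $J_b(F)$-orbit count, part (d) supplies the upper bound, and because $\LP(x)=\{v\}$ forces only the single slice $u=wv$ to carry top-dimensional components, that bound equals the multiplicity of $\max(E)$ in $E$. The main obstacle is upgrading this to equality without assuming $b$ regular: the argument for equality in Theorem~\ref{thm:adlvViaSemiInfiniteOrbits}(d) used $J_b(F)=T(F)$ to rule out collapsing of orbits from different $u$-slices. Here the same goal is achieved \emph{for free} because there is only one relevant slice. Concretely, Theorem~\ref{thm:ghkr} counts $(J_b(F)\cap U(L))$-orbits within that single slice exactly, and any $g\in J_b(F)$ sending one top-dim component to another must preserve the unique top-dim slice up to the $T(F)$-translation that shuffles representatives $z$ with fixed Weyl part. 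Hence $g$ reduces modulo $T(F)\cdot(J_b(F)\cap U(L))$ to a trivial identification, and the $(J_b(F)\cap U(L))$-orbit count coincides with the $J_b(F)$-orbit count, giving the desired equality.
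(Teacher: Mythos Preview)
Your overall plan (specialize Theorem~\ref{thm:adlvViaSemiInfiniteOrbits} to $J=\emptyset$) is exactly what the paper does, and your verification of $(\emptyset,C)$-superparabolicity and the matching of multisets are fine. However, you miss the one substantive observation in the paper's proof: the superregularity hypothesis forces $\nu(b)$ to be \emph{regular}. Indeed, once one knows $v^{-1}\mu-\nu(b)\geq 0$ (else both $E$ and $X_x(b)$ are empty by Mazur), for any simple root $\alpha$ one has
\[
\langle \nu(b),\alpha\rangle = \langle v^{-1}\mu,\alpha\rangle - \langle v^{-1}\mu-\nu(b),\alpha\rangle \geq 3\langle v^{-1}\mu-\nu(b),\rho\rangle - 2\langle v^{-1}\mu-\nu(b),\rho\rangle = \tfrac{C}{3}>0,
\]
using that $\langle \lambda,\alpha\rangle\leq 2\langle\lambda,\rho\rangle$ for $\lambda$ a nonnegative sum of simple coroots. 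Regularity of $\nu(b)$ immediately gives integrality of $[b]$ and lets one invoke the ``$b$ regular'' clause of Theorem~\ref{thm:adlvViaSemiInfiniteOrbits}(d) for the exact orbit count.

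This missing step causes two genuine gaps in your argument. First, Theorem~\ref{thm:adlvViaSemiInfiniteOrbits} has the standing hypothesis that $[b]$ is integral; you never check this, and without regularity of $\nu(b)$ there is no reason it should hold for an arbitrary $[b]\in B(G)$. Second, your workaround for the orbit count is not rigorous: the assertion that ``$g$ reduces modulo $T(F)\cdot(J_b(F)\cap U(L))$ to a trivial identification'' presumes something like an Iwasawa decomposition $J_b(F)=T(F)\cdot(J_b(F)\cap U(L))$, which fails whenever $J_b$ is a proper Levi $M$; elements of $N_M(T)(F)$ can move between semi-infinite slices in ways not captured by $T(F)$. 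The single-slice heuristic is suggestive but does not by itself rule out further collapsing. Once you note that $\nu(b)$ is regular, $J_b(F)=T(F)$ and both problems disappear, which is precisely the paper's route.
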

\begin{proof}
The regularity condition on $(x,b)$ implies that $\nu(b)$ must be regular. In particular, $[b]$ is integral. Now apply the previous theorem.
\end{proof}
\begin{remark}
We saw in Proposition~\ref{prop:qbgVsDbg} that the set $\{\omega\mid (\omega,e)\in\wts(v\Rightarrow wv)\}$ contains a unique minimum, which is given by the weight of a shortest path in the quantum Bruhat graph from $v$ to $wv$. The above corollary shows under some strong regularity conditions that the set $B(G)_x$ contains a unique maximum $[b_x]$, being the element of Newton point $\nu(b_x)=v^{-1}\mu-\wt(v\Rightarrow wv)$. It moreover follows that this element $[b_x]$ satisfies
\begin{align*}
d(v\Rightarrow wv) = \ell(x)-\langle \nu(b_x),2\rho\rangle = \dim X_x(b_x)
\end{align*} and that $X_x(b_x)$ has, up to $J_{b_x}(F)$-action, only one top dimensional irreducible component.

It is a well-known result of Viehmann that $B(G)_x$ always contains a unique maximum for arbitrary $G$ and $x$ \cite[Section~5]{Viehmann2014}. She moreover provides a combinatorial description in terms of the Bruhat order on the extended affine Weyl group.

The description of the generic $\sigma$-conjugacy class $[b_x]$ in terms of the quantum Bruhat graph is known due to Mili\'cevi\'c \cite{Milicevic2021}. Her proof is more combinatorial in nature, following Viehmann's description of $[b_x]$ using the Bruhat order and a comparison of the Bruhat order with the quantum Bruhat graph due to Lam-Shimozono \cite{Lam2010}. Her combinatorial methods have been refined since, so that a description of $[b_x]$ using the quantum Bruhat graph is known for arbitrary $G$ and $x$ \cite{Sadhukhan2022_qbg, He2024_demazure, Schremmer2022_newton}. Our corollary recovers Mili\'cevi\'c's original result using an entirely different proof method, which moreover reveals how to find the quantum Bruhat graph itself in the affine flag variety.

The aforementioned geometric properties of $X_x(b_x)$ are well-known for arbitrary $G$ and $x$, as described in the introduction. We can interpret Proposition~\ref{prop:qbgVsDbg}, i.e.\ essentially \cite[Theorem~6.4]{Brenti1998}, as a combinatorial shadow of these geometric facts.
\end{remark}
\begin{remark}\label{rem:chenZhu}
Let us compare Corollary~\ref{cor:superregularADLV} to the situation of affine Deligne--Lusztig varieties in the affine Grassmannian, i.e.\ where the parahoric subgroup $K = G(\mathcal O_L)$ is hyperspecial. Given $[b]\in B(G)$ and a dominant $\mu\in X_\ast(T)$, we can compare the affine Deligne--Lusztig variety $X_\mu(b)\subset G(L)/K$ with $X_{w_0t^\mu}(b)\subset G(L)/I$ following \cite[Theorem~10.1]{He2014}. If $x = w_0t^\mu$ satisfies the regularity conditions from Corollary~\ref{cor:superregularADLV}, this means we should study the multiset $\wts(1\Rightarrow w_0)$.

Given any reflection order $\Phi^+ = \{\beta_1\prec\cdots\prec\beta_{\#\Phi^+}\}$, there exists a unique unlabelled path of maximal length from $1$ to $w_0$, given by
\begin{align*}
\overline p: 1\xrightarrow{\beta_1} s_{\beta_1}\xrightarrow{\beta_2}\cdots\xrightarrow{\beta_{\#\Phi^+}} w_0.
\end{align*}
Each arrow in this path is increasing the length in $W$. Thus, the labelled paths $p$ in the double Bruhat graph with underlying unlabelled path $\overline p$ are precisely the paths of the form
\begin{align*}
p: 1\xrightarrow{(\beta_1,m_1)}s_{\beta_1}\xrightarrow{(\beta_2,m_2)}\cdots\xrightarrow{(\beta_{\#\Phi^+},m_{\#\Phi^+})} w_0
\end{align*}
for integers $m_1,\dotsc,m_{\#\Phi^+}\geq 0$. In the situation of Corollary~\ref{cor:superregularADLV}, we see that $X_x(b)\neq\emptyset$ if and only if $\mu-\nu(b)$ is a sum of positive coroots, in which case we get
\begin{align*}
\dim X_x(b) = \frac 12\left(\ell(x)+{\#\Phi^+}-\langle \nu(b),2\rho\rangle\right).
\end{align*}
The number of top dimensional irreducible components of $X_x(b)$ is equal to the number of different ways to express $\mu-\nu(b)$ as a sum of positive coroots. This latter quantity is known as \emph{Kostant's partition function}, which is also known to describe the dimension of the $\nu(b)$-weight space associated with the Verma module $V_\mu$. Under the regularity assumptioned made, this is also equal to the dimension of the $\nu(b)$-weight space of the irreducible quotient $M_\mu$ by Kostant's multiplicity formula.

In view of \cite[Theorem~10.1]{He2014}, we recover Theorem~\ref{thm:hyperspecial} in the setting of Corollary~\ref{cor:superregularADLV}. While this is a fairly restrictive setting, one may expect that statements similar to Corollary~\ref{cor:superregularADLV} hold true in much higher generality.
\end{remark}

\begin{CJK*}{UTF8}{gbsn}
\printbibliography
\end{CJK*}
\end{document}